\documentclass[a4paper,11pt]{amsart}
\usepackage{amscd,amssymb,amsmath,mathrsfs,latexsym,url}
\usepackage{hyperref}
\usepackage[mac]{inputenc}
\usepackage[T1]{fontenc}
\usepackage[french]{babel}
\hypersetup{breaklinks=true}

\catcode`\;=\active
\def;{\relax\ifhmode\ifdim\lastskip>\z@
\unskip\fi\kern.2em\fi\string;}

\catcode`\!=\active
\def!{\relax\ifhmode\ifdim\lastskip>\z@
\unskip\fi\kern.2em\fi\string!}

\catcode`\é=\active \def é{\'e}
\catcode`\à=\active \def à{\`a}
\catcode`\è=\active \def è{\`e}
\catcode`\ù=\active \def ù{\`u}
\catcode`\â=\active \def â{\^a}
\catcode`\ê=\active \def ê{\^e}
\catcode`\î=\active \def î{\^i}
\catcode`\ï=\active \def ï{\"i}
\catcode`\ô=\active \def ô{\^o}
\catcode`\û=\active \def û{\^u}
\catcode`\ä=\active \def ä{\"a}
\catcode`\ë=\active \def ë{\"e}
\catcode`\ö=\active \def ö{\"o}
\catcode`\ü=\active \def ü{\"u}
\catcode`\ç=\active \def ç{\c c}
\def\pre#1{$#1$\hbox{\hskip -\mathsurround -}}

%\def\og{\leavevmode\raise 1pt\hbox{$\scriptscriptstyle\langle\!\langle\,$}}
%\def\fg{\ignorespaces\leavevmode\raise 1pt\hbox{\kern0.7pt$\scriptscriptstyle\rangle\!\rangle$\ }}

%\newskip\normalparskip
%\def\rc{\normalparskip=\parskip\parskip=0pt plus 1pt\par
%\indent\parskip=\normalparskip}
%\def\up#1{\raise 1ex\hbox{\sevenrm#1}}

\frenchspacing

\font\tensc=cmcsc10	scaled 1000

\def\UU#1{{\ifmmode\underline{#1}\else\underbar{#1}\fi}}
\def\OO#1{{\ifmmode\overline{#1}
     \else$\setbox0=\hbox{#1} \dp0=0pt\mathsurround=0pt \overline{\box0}$\fi}}
\def\QQ#1{{\ifmmode{\boldsymbol{#1}}\else{\bf#1}\fi}}
\def\RQ#1{{\ifmmode{\mathbf{#1}}\else{\bf#1}\fi}}
\def\RR#1{{\rm #1}}   
\def\SS#1{{\mathcal #1}}   
\def\AA#1{{\mathfrak #1}}  

\numberwithin{equation}{section}
\newtheorem{thm}[equation]{Théorème}
\newtheorem{prop}[equation]{Proposition}
\newtheorem{cor}[equation]{Corollaire}
\newtheorem{lem}[equation]{Lemme}
\newtheorem{definition}[equation]{D\'efinition}

\newtheorem{exes}[equation]{Exemples}

\newenvironment{undef}[1]{\noindent{\bf #1.} }{\vspace{3.3mm}}

% Activate to display a given date or no date
%\newcount\calculh\newcount\calculm
%\date{\calculm=\time\calculh=\time\divide\calculh by60\relax\rm\the\day-\the\month-\the\year\ {\it à} \the\calculh:\multiply\calculh by60\advance\calculm by-\calculh\relax\ifnum\calculm<10 0\the\calculm\else\the\calculm\fi\ ({\it dernière compilation}).}
%\date{}%\date{\today} 

\overfullrule=3\smallskipamount
\mathsurround=0.4\smallskipamount

\title[Groupes de Galois et nombres automatiques]{Groupes de Galois et nombres automatiques}

\author[Patrice Philippon]{Patrice Philippon}

\address{Institut de Math{\'e}matiques de Jussieu -- U.M.R. 7586 du CNRS, \'Equipe de Th\'eorie des Nombres. BP 247, 4 place Jussieu, 75005 Paris, France.}

\email{patrice.philippon@imj-prg.fr}

\urladdr{\url{http://www.math.jussieu.fr/~pph}}

\subjclass[2010]{Primary 11J81; Secondary 12H10, 26A18.}  

\keywords{théorie de Mahler, fonction \pre{q}régulière, groupe de Galois, extension régulière, série automatique.}

\begin{document}

\begin{abstract}
Nous montrons dans le cadre de la théorie de Mahler, que les relations de dépendance algébrique sur $\OO{\RQ Q}$ entre les valeurs de fonctions solutions d'un système d'équations fonctionnelles proviennent, par spécialisation, des relations entre les fonctions elle-mêmes. Nous en déduisons quelques résultats nouveaux sur l'indépendance linéaire des valeurs de fonctions \pre{q}régu\-liè\-res.

\medskip

\centerline{\tensc Abstract}
\noindent{\bf Galois groups and automatic numbers}: In the frame of Mahler's method for algebraic independence, we show that the algebraic relations over $\OO{\RQ Q}$ linking the values of functions solutions of a system of functional equations come from the algebraic relations between the functions themselves, by specialisation. We deduce some new results on the linear independence of values of \pre{q}regular functions.
\end{abstract}

\maketitle

\tableofcontents

\section{Introduction}\label{sec:introduction}
Les relations de dépendance algébrique entre valeurs de fonctions analytiques ou méro\-mor\-phes font encore l'objet d'un large mystère. Elles héritent bien évi\-dem\-ment, par spécialisation, des relations liant les fonctions elles-mêmes et souvent se cantonnent à cet héritage. C'est par exemple le cas dans la théorie des \pre{E}fonctions et plus généralement des séries Gevrey, tout au moins en dehors d'un éventuel ensemble fini d'arguments exceptionnels.

Plus précisément, dans le cadre des \pre{E}fonctions en une variable complexe $z$ le théorème de Siegel-Shidlovskii énonce que pour les arguments algébriques en dehors des points singuliers du système différentiel satisfait par une famille de \pre{E}fonctions, le degré de transcendance sur $\RQ Q$ des valeurs de ces \pre{E}fonctions est égal au degré de transcendance sur $\RQ Q(z)$ des fonctions.

Cet énoncé remarquable ne donne toutefois pas directement la nature exacte des relations liant les valeurs en question. Par exemple, ces valeurs sont-elles linéairement indépendantes sur $\RQ Q$ dès que les fonctions sont liné\-ai\-re\-ment indépendantes sur $\RQ Q(z)$? Ou bien, la valeur d'une fonction transcendante sur $\RQ Q(z)$ de la famille, est-elle transcendante sur $\RQ Q$?

Cet aspect a été étudié par exemple dans~\cite{NS=96} où Y.V.Nesterenko et A.B.Shidlovskii ont montré qu'en dehors d'un ensemble fini effectivement calculable (contenant les singularités du système différentiel impliqué) toute relation algébrique entre les valeurs provient de la spécialisation d'une relation algébrique entre les fonctions. Dans~\cite{Be=06} F.Beukers, s'appuyant sur l'approche radicalement nouvelle d'Y.André~\cite{An=00}, montre que l'ensemble exceptionnel est en fait réduit aux seules singularités du système différentiel. Et récemment, Y.André~\cite{An=12} remarque que ce dernier énoncé se déduit aussi du théorème de Siegel-Shidlovskii, indépendamment de la méthode de~\cite{An=00}, via la théorie de Galois différentielle.

Notons toutefois que la connaissance des relations algébriques liant toutes les fonctions est nécessaire pour en déduire la transcendance de la valeur d'une seule de ces fonctions, par exemple. En effet, par spécialisation une relation de dépendance algébrique impliquant plusieurs fonctions peut se réduire à une relation de dépendance algébrique de la valeur d'une seule d'entre elles. L'idéal des relations algébriques entre les fonctions peut être déterminé par la connaissance du groupe de Galois du système différentiel satisfait par les \pre{E}fonctions.

\medskip

Notre propos ici est d'une part d'obtenir des théorèmes semblables dans le cadre de la théorie de Mahler. Et d'autre part de préciser ces résultats dans le cas des séries automatiques et plus généralement des séries \pre{q}régulières, complétant ainsi les résultats de~\cite{Bk=94}. La méthode de Mahler conduit à un énoncé tout-à-fait similaire au théorème de Siegel-Shidlovskii, obtenu par Ku.Nishioka.

\begin{thm}{\rm (\cite[Theorem 4.2.1]{Ni=96})}\label{thm:nishioka}
Soit $N\geq 1$, $q\geq 2$ des entiers, $\rho>0$ un réel positif, $\QQ f(z)$ un vecteur colonne de $N$ séries $f_1(z),\dots,f_N(z)$ en $z$ à coefficients dans un corps de nombres, convergeant dans le disque sans bord de rayon $\rho$ centré en l'origine dans $\RQ C$ et satisfaisant le système d'équations fonctionnelles
$$\QQ f(z) = A(z)\QQ f(z^q)
\enspace,$$
où $A(z)$ est une matrice $N\times N$ inversible, à coefficients dans $\OO{\RQ Q}(z)$.

Soit $\alpha$ un nombre algébrique non nul, de valeur absolue $<\rho$ tel qu'aucune de ses puissances $\alpha^{q^\ell}$, $\ell\in\RQ N$, ne soit pôle d'un coefficient de $A(z)^{-1}$, alors
$$\RR{degtr}_{\OO{\RQ Q}}\OO{\RQ Q}(f_1(\alpha),\dots,f_N(\alpha)) \geq \RR{degtr}_{\OO{\RQ Q}(z)}\OO{\RQ Q}(z,f_1(z),\dots,f_N(z))
\enspace.$$ 
\end{thm}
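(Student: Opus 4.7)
On suivrait la m\'ethode de Mahler dans sa version g\'en\'erale due \`a Nishioka, reposant sur une construction auxiliaire, l'it\'eration du syst\`eme fonctionnel et un lemme de z\'eros. Raisonnons par l'absurde: posant $t:=\RR{degtr}_{\OO{\RQ Q}(z)}\OO{\RQ Q}(z,f_1,\dots,f_N)$, on suppose $\RR{degtr}_{\OO{\RQ Q}}\OO{\RQ Q}(f_1(\alpha),\dots,f_N(\alpha))<t$. La premi\`ere \'etape consiste \`a construire, via un lemme de Siegel, un polyn\^ome non nul $P\in\OO{\RQ Z}[X_1,\dots,X_N]$ de degr\'e total $\leq L$ et de hauteur $\leq c_1^L$, tel que la fonction auxiliaire $F(z):=P(f_1(z),\dots,f_N(z))$ ait un ordre d'annulation $\geq T$ en $z=0$, les param\`etres $L$ et $T$ \'etant ajust\'es pour que le nombre de coefficients de $P$ (de l'ordre $L^N$) exc\`ede le nombre de contraintes impos\'ees, compte tenu du degr\'e du corps de nombres consid\'er\'e.

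L'\'equation fonctionnelle $\QQ f(z)=A(z)\QQ f(z^q)$, it\'er\'ee $k$ fois, donne $\QQ f(\alpha^{q^k})=A(\alpha^{q^{k-1}})^{-1}\cdots A(\alpha)^{-1}\QQ f(\alpha)$, expression bien d\'efinie gr\^ace \`a l'hypoth\`ese sur les $\alpha^{q^\ell}$. Ainsi $F(\alpha^{q^k})$ appartient au corps $K:=\OO{\RQ Q}(\QQ f(\alpha))$, de degr\'e de transcendance $<t$ sur $\RQ Q$ par hypoth\`ese d'absurde. L'annulation de $F$ en $0$ \`a l'ordre $T$, jointe \`a $|\alpha^{q^k}|\to0$, fournit d'une part la majoration analytique $|F(\alpha^{q^k})|\leq C_2^{q^k}|\alpha|^{q^kT}$; d'autre part, une minoration de type Liouville dans une extension alg\'ebrique de degr\'e born\'e de $K$ donne $|F(\alpha^{q^k})|\geq C_3^{-q^k\delta(L,T)}$ d\`es que $F(\alpha^{q^k})\neq0$, la fonction $\delta$ \'etant polynomiale en $L,T$. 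Un choix judicieux des param\`etres $L,T,k$ ferait alors surgir la contradiction recherch\'ee.

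Le point d\'elicat, v\'eritable c\oe{}ur technique de l'argument, est de garantir que $F(\alpha^{q^k})\neq0$ pour au moins une valeur utile de $k$. C'est ici qu'intervient le lemme de z\'eros de Nishioka, g\'en\'eralisation \`a la substitution de Mahler $z\mapsto z^q$ du th\'eor\`eme de multiplicit\'e de Nesterenko: il borne, en fonction de $\deg P$ et de la dimension $t+1$ de l'id\'eal des relations alg\'ebriques entre $z,f_1(z),\dots,f_N(z)$ sur $\OO{\RQ Q}$, l'ordre d'annulation en $0$ de $P(\QQ f(z))$ pour $P$ hors de cet id\'eal. Confront\'ee \`a la construction pigeonholique, cette borne assure la non-annulation de $F(\alpha^{q^k})$ dans la plage pertinente et cl\^ot la contradiction. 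L'obstacle principal est donc l'obtention, non triviale, de ce lemme de z\'eros multivariable, dont la preuve (en alg\`ebre commutative) requiert un contr\^ole fin des id\'eaux invariants sous la substitution.
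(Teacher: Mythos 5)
Le th\'eor\`eme dont il s'agit n'est pas d\'emontr\'e dans l'article~: il y est simplement cit\'e comme un r\'esultat de Ku.~Nishioka (\cite[Theorem 4.2.1]{Ni=96}) sur lequel s'appuie toute la suite. Votre esquisse doit donc \^etre confront\'ee \`a la d\'emonstration de Nishioka elle-m\^eme, et elle pr\'esente une lacune essentielle au niveau de la minoration. Sous l'hypoth\`ese de l'absurde, le corps $K=\OO{\RQ Q}(f_1(\alpha),\dots,f_N(\alpha))$ est de degr\'e de transcendance $<t$~; mais d\`es que $t\ge 2$, ce degr\'e peut \^etre $\ge 1$, et $F(\alpha^{q^k})$ est alors un \'el\'ement d'un corps de type fini de degr\'e de transcendance strictement positif~: il n'existe aucune in\'egalit\'e de type Liouville minorant un \'el\'ement non nul d'un tel corps en fonction de degr\'es et de hauteurs, contrairement \`a ce que vous affirmez (\emph{\og minoration de type Liouville dans une extension alg\'ebrique de degr\'e born\'e de $K$\fg}). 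Votre sch\'ema ne fonctionne telle quelle que lorsque le degr\'e de transcendance suppos\'e des valeurs est nul, c'est-\`a-dire qu'il ne d\'emontre que l'assertion \emph{\og au moins une valeur est transcendante d\`es qu'une fonction l'est\fg}, et non l'in\'egalit\'e compl\`ete $\RR{degtr}_{\OO{\RQ Q}}\OO{\RQ Q}(\QQ f(\alpha))\ge t$.

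C'est pr\'ecis\'ement pour franchir ce cap que Nishioka remplace l'argument de Liouville par la m\'ethode d'\'elimination de Nesterenko (formes de Chow, estimations de taille et de degr\'e d'id\'eaux le long de l'it\'eration $z\mapsto z^q$), ce qui revient, en langage moderne, \`a injecter les petites valeurs non nulles de la fonction auxiliaire dans un crit\`ere d'ind\'ependance alg\'ebrique plut\^ot qu'\`a contredire une minoration na\"ive. Notez en outre que l'accent que vous mettez sur un \emph{lemme de z\'eros} est mal plac\'e pour ce th\'eor\`eme~: la non-annulation de $F(\alpha^{q^k})$ pour $k$ assez grand est imm\'ediate d\`es que $F\not\equiv 0$, puisque $F$ est analytique au voisinage de l'origine et que les $\alpha^{q^k}$ tendent vers $0$ sans que les z\'eros de $F$ puissent s'y accumuler~; l'int\'er\^et historique de l'approche de Nishioka est justement d'\'etablir l'analogue complet du th\'eor\`eme de Siegel--Shidlovskii \emph{sans} recourir \`a une estimation de multiplicit\'e \`a la Nesterenko pour les fonctions de Mahler. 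Il faudrait donc reprendre le c\oe ur de votre argument en y substituant l'appareil d'\'elimination (ou un crit\`ere d'ind\'ependance alg\'ebrique \`a la Philippon) pour obtenir l'\'enonc\'e dans toute sa g\'en\'eralit\'e.
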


P-G.Becker~\cite{Bk=94} remarque que ce théorème~\ref{thm:nishioka}, à l'instar du théorème de Siegel-Shidlovskii, n'entraîne pas la transcendance des valeurs de fonctions automatiques transcendantes aux points algébriques du disque de convergence. Pourtant, utilisant une approche complètement différente, via le théorème du sous-espace de Schmidt, B.Adamczewski et Y.Bugeaud~\cite{AdBu=07} ont montré qu'un nombre automatique est transcendant ou rationnel, cette dernière éventualité intervenant lorsque la série automatique associée est une fonction rationnelle. Mais, ce résultat ne découle pas directement du théorème~\ref{thm:nishioka}. On sait par ailleurs qu'une fonction automatique, ou plus généralement \pre{q}régulière, algébrique est nécessairement rationnelle, \emph{cf.}~\cite[Lemma 5]{Bk=94}.

\medskip

Soit $q\geq 2$ un entier et $A(z)$ une matrice $N\times N$ inversible, à coefficients dans $\OO{\RQ Q}(z)$. Nous supposons qu'il existe un vecteur $\QQ f(z)$, à composantes dans $\OO{\RQ Q}[[z]]$, solution du système d'équations fonctionnelles
\begin{equation}\label{eq:systintro}
\QQ f(z)=A(z)\QQ f(z^q)
\enspace.\end{equation}
On sait que les composantes de $\QQ f(z)$ définissent des fonctions méromorphes dans le disque unité sans bord de $\RQ C$, \emph{voir} le lemme~\ref{lem:convergence} de la section~\ref{sec:convergence}. Nous allons montrer que les relations de dépendance algébrique sur $\OO{\RQ Q}$ entre les valeurs des composantes de $\QQ f(z)$ (sauf en un ensemble d'arguments particuliers) proviennent, par spécialisation, des relations de dépendance algébrique sur $\OO{\RQ Q}[z]$ entre les composantes elles-mêmes.

On note $T(z)=T_A(z)\in\OO{\RQ Q}[z]$ le polynôme de plus petit degré satisfaisant $T(0)=1$ et dont les zéros sont les pôles des coefficients des matrices $A(z)$ et $A(z)^{-1}$, distincts de $0$, comptés avec multiplicité. L'union des racines des polynômes $T(z^{q^\ell})$, $\ell\in\RQ N$, sera alors appelée l'\emph{ensemble singulier} (et ses éléments \emph{singularités}) de~\eqref{eq:systintro}. Nous observons qu'avec cette définition tout pôle d'une composante de $\QQ f(z)$ est singularité de~\eqref{eq:systintro}.
\begin{thm}\label{thm:indalg}
Soit $\alpha\in\OO{\RQ Q}\subset\RQ C$, $0<|\alpha|<1$, tel que $T(\alpha^{q^\ell})\not=0$ pour tout $\ell\in\RQ N$, alors pour tout $P\in\OO{\RQ Q}[X]$ tel que $P(\QQ f(\alpha))=0$, il existe $Q\in\OO{\RQ Q}[z,X]$ de même degré en $X$ que $P$, tel que $Q(z,\QQ f(z))=0$ et $P(X)=Q(\alpha,X)$.
\end{thm}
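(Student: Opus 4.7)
Le plan consiste à combiner le théorème~\ref{thm:nishioka} avec une étude de la spécialisation de l'idéal des relations entre les $f_i(z)$, en s'appuyant sur l'itération de l'équation fonctionnelle~\eqref{eq:systintro}. On introduit l'idéal premier $\AA I=\{Q\in\OO{\RQ Q}[z,X]\ :\ Q(z,\QQ f(z))=0\}$ et l'idéal premier $\AA J_\alpha=\{P\in\OO{\RQ Q}[X]\ :\ P(\QQ f(\alpha))=0\}$. L'énoncé revient à montrer que la spécialisation $Q(z,X)\mapsto Q(\alpha,X)$ envoie $\AA I$ surjectivement sur $\AA J_\alpha$, en préservant le degré en $X$.

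La première étape vise à obtenir l'égalité des dimensions des variétés concernées. L'inégalité triviale selon laquelle une spécialisation ne fait pas croître le degré de transcendance, combinée au théorème~\ref{thm:nishioka}, fournit
$$\RR{degtr}_{\OO{\RQ Q}(z)}\OO{\RQ Q}(z,\QQ f(z))=\RR{degtr}_{\OO{\RQ Q}}\OO{\RQ Q}(\QQ f(\alpha)),$$
donc l'égalité des dimensions de Krull de la fibre $V(\AA I)_\alpha$ et de la sous-variété $V(\AA J_\alpha)\subset V(\AA I)_\alpha$.

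La seconde étape consiste à se placer dans un cadre d'extension régulière. Quitte à enrichir le système~\eqref{eq:systintro} par adjonction des éléments de $\OO{\RQ Q}(z,\QQ f(z))$ algébriques sur $\OO{\RQ Q}(z)$ (en préservant les hypothèses sur les singularités), on peut supposer que l'extension $\OO{\RQ Q}(z,\QQ f(z))/\OO{\RQ Q}(z)$ est régulière, \emph{i.e.} que la variété $V(\AA I)$ est géométriquement intègre. Dans ce cadre, la spécialisation de l'idéal premier $\AA I$ en un point général reste un idéal premier, définissant une composante de $V(\AA I)_\alpha$ de même dimension que la fibre générique.

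L'obstacle principal consiste alors à établir que l'hypothèse $T(\alpha^{q^\ell})\not=0$ pour tout $\ell\in\RQ N$ place $\alpha$ hors du fermé exceptionnel, et garantit la surjectivité \emph{avec} préservation du degré à tout degré $d$ en $X$. C'est ici qu'intervient l'itération~: les matrices $A_\ell(\alpha):=A(\alpha)A(\alpha^q)\cdots A(\alpha^{q^{\ell-1}})$ sont inversibles pour tout $\ell\geq 0$, et l'identité $\QQ f(\alpha^{q^\ell})=A_\ell(\alpha)^{-1}\QQ f(\alpha)$ transporte par isomorphisme linéaire toute relation en $\alpha$ en une relation en $\alpha^{q^\ell}$ de même degré en $X$. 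Combinée à $\alpha^{q^\ell}\to 0$ et à l'analyticité de $\QQ f$ au voisinage de $0$ (lemme~\ref{lem:convergence}), cette itération ramène la question à un problème de spécialisation près de $0$, où la régularité de l'extension assure que les relations se relèvent sans perte de degré en $X$. La vérification précise de ce transport, en contrôlant finement les dénominateurs introduits par les itérés successifs, constitue le cœur technique de la preuve.
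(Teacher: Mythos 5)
Votre sch\'ema g\'en\'eral est bien celui du texte~: \'egalit\'e des degr\'es de transcendance via le th\'eor\`eme~\ref{thm:nishioka}, sp\'ecialisation d'un id\'eal premier au voisinage de l'origine, puis transport de la relation par it\'eration de~\eqref{eq:systintro} en utilisant l'inversibilit\'e de $A^{(\ell)}(\alpha)$ et l'hypoth\`ese $T(\alpha^{q^h})\not=0$ (c'est la proposition~\ref{prop:indalg} suivie du corollaire~\ref{cor:indalgi}). Le point qui fait d\'efaut est votre deuxi\`eme \'etape~: vous supposez pouvoir vous ramener au cas o\`u $\OO{\RQ Q}(z)\subset\OO{\RQ Q}(z,\QQ f(z))$ est r\'eguli\`ere en \emph{enrichissant le syst\`eme} par les \'el\'ements alg\'ebriques sur $\OO{\RQ Q}(z)$. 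Cette r\'eduction n'est pas justifi\'ee et ne semble pas praticable telle quelle~: rien n'assure que ces \'el\'ements alg\'ebriques s'ins\`erent dans un syst\`eme de Mahler lin\'eaire \`a matrice inversible sur $\OO{\RQ Q}(z)$, ni que le syst\`eme \'elargi garde un ensemble singulier \'evitant les $\alpha^{q^h}$; et m\^eme en admettant ces deux points, la relation relev\'ee ferait a priori intervenir les nouvelles variables, donc ne donnerait pas un $Q\in\OO{\RQ Q}[z,X]$ en les seules variables d'origine avec $Q(z,\QQ f(z))=0$, ce qu'exige l'\'enonc\'e. Dans l'article, la r\'egularit\'e n'est pas obtenue par une normalisation mais d\'emontr\'ee~: c'est le lemme~\ref{lem:extreg}, dont la preuve combine la th\'eorie de Picard--Vessiot (proposition~\ref{prop:PVitere} et lemme~\ref{lem:regulariteetconnexite}, pour la finitude de la cl\^oture alg\'ebrique de $K$ dans $K(V)$), un prolongement m\'eromorphe fond\'e sur une relation $\sigma^m(f)=\sum_{\ell}b_\ell\sigma^\ell(f)$, et Riemann--Hurwitz pour conclure \`a la rationalit\'e. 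C'est un vrai th\'eor\`eme, qui manque enti\`erement dans votre proposition.

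Par ailleurs, le m\'ecanisme central que vous invoquez -- la r\'egularit\'e assurerait que les relations se rel\`event sans perte de degr\'e en $X$ pr\`es de l'origine -- n'est pas d\'etaill\'e, et vous reconnaissez vous-m\^eme que ce point reste \`a v\'erifier. C'est pr\'ecis\'ement le contenu de la proposition~\ref{prop:indalg}~: la r\'egularit\'e rend l'id\'eal $\AA P\cap\OO{\RQ Q}(z)[X]$ absolument premier (Zariski--Samuel), on l'homog\'en\'eise en $X$ pour contr\^oler le degr\'e, le th\'eor\`eme de sp\'ecialisation de Krull donne que $\AA s_\alpha(\tilde{\AA P}\cap\OO{\RQ Q}[z,\tilde X])$ est premier de m\^eme rang pour $\alpha$ hors d'un ensemble fini (d'o\`u le choix du rayon $\rho'$), et l'in\'egalit\'e de rangs d\'eduite du th\'eor\`eme~\ref{thm:indalgbis} force alors l'\'egalit\'e avec l'homog\'en\'eis\'e $\tilde{\AA P}_\alpha$ de l'id\'eal des relations en $\alpha$. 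Sans l'homog\'en\'eisation et sans ce th\'eor\`eme de Krull, la surjectivit\'e \emph{avec} pr\'eservation du degr\'e n'est pas \'etablie~: l'\'egalit\'e des dimensions que vous tirez du th\'eor\`eme~\ref{thm:nishioka} ne suffit pas, il faut l'int\'egrit\'e et l'\'egalit\'e des rangs de la fibre sp\'ecialis\'ee. Le retour de $\alpha^{q^\ell}$ \`a $\alpha$, en revanche, correspond bien au corollaire~\ref{cor:indalgi} (application de $\tilde\sigma^\ell$ puis multiplication par une puissance de $\Delta(z)=T_A(z)\cdots T_A(z^{q^{\ell-1}})$, qui ne s'annule pas en $\alpha$)~: cette partie de votre esquisse est correcte dans son principe, mais elle aussi laiss\'ee en suspens.
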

Ce résultat est l'analogue fonctionnel de celui établi par F.Beukers dans~\cite[Theorem 1.3]{Be=06} dans le cas différentiel des \pre{E}fonctions. La démonstration que nous en donnons à la section~\ref{sec:regspe} est toutefois plutôt à rapprocher de celle proposée par Y.André dans~\cite[Corollary 1.7.1]{An=12}. Pour élucider les relations de dépendance algébriques liant les composantes de $\QQ f(\alpha)$, il faut donc encore trouver des générateurs de l'idéal des relations entre les composantes de $\QQ f(z)$ sur $\OO{\RQ Q}(z)$. Et pour cela la connaissance du groupe de Galois associé au système~\eqref{eq:systintro} peut être une étape cruciale. Nous rappelons aux sections~\ref{sec:corpsendo} et~\ref{sec:Galois} les éléments de théorie des corps à endomorphisme et de théorie de Galois pertinents dans ce contexte.

\medskip

On obtient comme conséquence directe du théorème~\ref{thm:indalg} le suivant, qui recopie le corollaire~\ref{cor:thmindlinautnondeg} démontré à la section~\ref{sec:independancelin}.

\begin{thm}\label{thm:indlinaut}
Soit $\RQ k_0\subset\RQ C$ un corps de nombres, $\alpha\in\RQ k_0$, $0<|\alpha|<1$, tel que $T(\alpha^{q^h})\not=0$ pour tout $h\in\RQ N$ et $1\le\ell\le N$. Supposons que la matrice $A(z)$ du système~\eqref{eq:systintro} soit à coefficients dans $\RQ k_0(z)$ et le vecteur $\QQ f(z)$ à composantes $f_1(z),\dots,f_N(z)$ dans $\RQ k_0[[z]]$. Si les nombres $f_{1}(\alpha),\dots,f_{\ell}(\alpha)$ sont linéairement indépendants sur $\RQ k_0$ alors ils sont linéairement indépendants sur $\OO{\RQ Q}$.
\end{thm}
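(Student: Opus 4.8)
The plan is to deduce Theorem~\ref{thm:indlinaut} from Theorem~\ref{thm:indalg} by a descent-of-scalars argument on the field of definition. Let me work through the structure.

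First, I note that Theorem~\ref{thm:indalg} is stated over $\OO{\RQ Q}$, but the hypotheses of Theorem~\ref{thm:indlinaut} fix a smaller number field $\RQ k_0$ containing $\alpha$ and the coefficients of $A(z)$, with $\QQ f(z)$ having components in $\RQ k_0[[z]]$. Suppose, for contradiction, that $f_1(\alpha),\dots,f_\ell(\alpha)$ are linearly independent over $\RQ k_0$ but linearly dependent over $\OO{\RQ Q}$. Then there exists a nonzero linear form $P(X_1,\dots,X_N)=\sum_{i=1}^\ell c_i X_i$ with coefficients $c_i\in\OO{\RQ Q}$ such that $P(\QQ f(\alpha))=0$. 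Theorem~\ref{thm:indalg} applies (its hypothesis $T(\alpha^{q^\ell})\neq 0$ for all $\ell\in\RQ N$ is exactly assumed here), yielding $Q\in\OO{\RQ Q}[z,X]$, of the same degree in $X$ as $P$ — hence $Q$ is also linear and homogeneous in the variables $X_1,\dots,X_N$, say $Q(z,X)=\sum_{i=1}^N q_i(z)X_i$ with $q_i(z)\in\OO{\RQ Q}[z]$ — such that $Q(z,\QQ f(z))=0$ identically and $Q(\alpha,X)=P(X)$. In particular $q_i(\alpha)=c_i$ for $1\le i\le \ell$ and $q_i(\alpha)=0$ for $\ell<i\le N$.

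Second, I descend this functional relation to $\RQ k_0$. The components $f_1(z),\dots,f_N(z)$ lie in $\RQ k_0[[z]]$, so the space of $\OO{\RQ Q}$-linear relations $\sum_i q_i(z)f_i(z)=0$ with $q_i\in\OO{\RQ Q}(z)$ — equivalently the kernel of the evaluation map — is defined over $\RQ k_0$: that is, it admits a basis of relations with coefficients in $\RQ k_0(z)$, and clearing denominators, in $\RQ k_0[z]$. Concretely, one writes $Q=\sum_j \lambda_j Q^{(j)}$ where $\{Q^{(j)}\}$ is a $\RQ k_0$-basis of the $\RQ k_0[z]$-module of polynomial linear relations among the $f_i(z)$, and the $\lambda_j$ lie in some finite extension; extracting coordinates in a $\RQ k_0$-basis of $\OO{\RQ Q}$ (or rather in the relevant finite extension), each $Q^{(j)}$ satisfies $Q^{(j)}(z,\QQ f(z))=0$. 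Evaluating at $z=\alpha\in\RQ k_0$, each $Q^{(j)}(\alpha,\QQ f(\alpha))=0$ is a $\RQ k_0$-linear relation among $f_1(\alpha),\dots,f_N(\alpha)$ — and one checks that among these, the restriction to the first $\ell$ coordinates forces a nontrivial $\RQ k_0$-linear relation on $f_1(\alpha),\dots,f_\ell(\alpha)$, because $Q(\alpha,X)=P(X)\ne 0$ involves only these coordinates while the $q_i(\alpha)$ for $i>\ell$ vanish. This contradicts the hypothesis of $\RQ k_0$-linear independence.

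The delicate point — and the one I would treat carefully rather than wave through — is the descent step: one must know that the ideal (or, here, the module) of linear relations among $f_1(z),\dots,f_N(z)$ with coefficients in $\OO{\RQ Q}[z]$ is generated by relations with coefficients in $\RQ k_0[z]$. This is a standard fact of Galois descent / flat base change for vector spaces (the relation module is $\RQ k_0$-rational because the $f_i$ have $\RQ k_0$-coefficients, so the $\OO{\RQ Q}$-relations are obtained from the $\RQ k_0$-relations by extension of scalars $\OO{\RQ Q}\otimes_{\RQ k_0}(-)$), but one should phrase it cleanly: the key is that a linear relation $\sum_i q_i(z)f_i(z)=0$ over $\OO{\RQ Q}[z]$, expanded coefficient-by-coefficient in $z$, becomes a system of $\OO{\RQ Q}$-linear equations on the unknowns (the coefficients of the $q_i$) with coefficient matrix entries in $\RQ k_0$; such a system has its solution space defined over $\RQ k_0$, so one may take the $q_i$ in $\RQ k_0[z]$. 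Granting this, the argument above goes through, and one concludes — paralleling the passage from Theorem~\ref{thm:indalg} to Corollary~\ref{cor:thmindlinautnondeg} cited in Section~\ref{sec:independancelin} — that $f_1(\alpha),\dots,f_\ell(\alpha)$ are in fact linearly independent over $\OO{\RQ Q}$.
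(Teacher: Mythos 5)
Your proposal follows essentially the same route as the paper's proof of Corollary~\ref{cor:thmindlinautnondeg}: apply Theorem~\ref{thm:indalg} to lift the putative $\OO{\RQ Q}$-linear relation at $\alpha$ to a polynomial linear relation among the $f_i(z)$, then descend to $\RQ k_0$-coefficients by decomposing along a $\RQ k_0$-basis and using that the $f_i\in\RQ k_0[[z]]$. Two points should be tightened. First, Theorem~\ref{thm:indalg} guarantees only that $Q$ has the \emph{same degree} in $X$ as $P$, not that $Q$ is homogeneous; so $Q(z,X)=q_0(z)+\sum_i q_i(z)X_i$ may carry a constant term $q_0(z)$ with $q_0(\alpha)=0$. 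The paper keeps this term as $c_0(z)$, and the descent then yields $c_{0,j}(\alpha)=0$ for every $j$, so the resulting $\RQ k_0$-relations at $\alpha$ are still homogeneous and the contradiction proceeds as you intend; your ``hence $Q$ is homogeneous'' is an unjustified step that should be replaced by carrying $q_0(z)$ along. Second, the phrase ``$\RQ k_0$-basis of the $\RQ k_0[z]$-module of polynomial linear relations'' is ill-posed (that module is not a finite-dimensional $\RQ k_0$-space). The paper's incarnation of the descent idea you describe is simpler: take a $\RQ k_0$-basis $\xi_1,\dots,\xi_s$ of the finite-dimensional $\RQ k_0$-span in $\OO{\RQ Q}$ of the coefficients of $c_0(z),\dots,c_N(z)$, write $c_i(z)=\sum_j c_{i,j}(z)\xi_j$ with $c_{i,j}\in\RQ k_0[z]$, and observe that $\sum_j\bigl(c_{0,j}(z)+\sum_i c_{i,j}(z)f_i(z)\bigr)\xi_j=0$ forces each bracket to vanish, since each bracket is a power series with coefficients in $\RQ k_0$ and the $\xi_j$ are $\RQ k_0$-independent. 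Specialising at $z=\alpha$ then gives the desired nontrivial $\RQ k_0$-relation among $f_1(\alpha),\dots,f_\ell(\alpha)$, exactly as in your conclusion.
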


Nous déduisons du théorème~\ref{thm:indlinaut} le théorème~\ref{thm:indlinautsing} suivant pour les systèmes à coefficients polynomiaux (en particulier pour les fonctions \pre{q}régulières étudiées dans~\cite{Bk=94}, \emph{voir} aussi~\cite[Theorem 5.1.2]{Ni=96}). Dans ce cas, les composantes dans $\OO{\RQ Q}[[z]]$ du vecteur solution $\QQ f(z)$ de~\eqref{eq:systintro} convergent dans le disque unité sans bord de $\RQ C$ et y définissent des fonctions analytiques, \emph{voir} lemme~\ref{lem:convergence} à la section~\ref{sec:convergence}. L'enjeux est de réintégrer les singularités du système d'équations fonctionnelles dans l'énoncé.

La preuve consiste à se ramener, lorsque $\alpha$ est une singularité, à un système nettoyé de cette singularité. Nous utilisons à cette fin le lemme ad hoc~\ref{lem:desingaut}, mais nous remarquons que B.Adamczewski et C.Faverjon disposent d'un lemme de réduction des singularités~\cite[lemme~6]{AdFa=15}, en fait plus simple et plus efficace que notre énoncé. C'est d'ailleurs après que B.Adamczewski m'ait mentionné ce résultat que j'ai pu conclure la preuve du lemme~~\ref{lem:desingaut}. Le passage du théorème~\ref{thm:indlinaut} au théorème~\ref{thm:indlinautsing} leur revient donc.

\begin{thm}\label{thm:indlinautsing}
Soit $\RQ k_0\subset\RQ C$ un corps de nombres, $\alpha\in\RQ k_0$, $0<|\alpha|<1$, et $1\le\ell\le N$. Supposons que la matrice $A(z)$ du système~\eqref{eq:systintro} soit à coefficients dans $\RQ k_0[z]$ et le vecteur $\QQ f(z)$ à composantes $f_1(z),\dots,f_N(z)$ dans $\RQ k_0[[z]]$. Si les nombres $f_{1}(\alpha),\dots,f_{\ell}(\alpha)$ sont linéairement indépendants sur $\RQ k_0$ alors ils sont linéairement indépendants sur $\OO{\RQ Q}$.
\end{thm}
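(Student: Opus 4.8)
\'Ecartons d'abord le cas imm\'ediat. Si $T(\alpha^{q^h})\not=0$ pour tout $h\in\RQ N$, la matrice $A(z)\in\RQ k_0[z]$ est en particulier \`a coefficients dans $\RQ k_0(z)$, et le th\'eor\`eme~\ref{thm:indlinaut} s'applique directement. On suppose donc d\'esormais que $\alpha$ est une singularit\'e de~\eqref{eq:systintro}. Rappelons que, $A(z)$ \'etant polynomiale, les composantes de $\QQ f(z)$ sont analytiques dans le disque unit\'e (lemme~\ref{lem:convergence}), de sorte que $f_1(\alpha),\dots,f_N(\alpha)$ sont bien d\'efinis. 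De plus, comme $T(0)=1$ et que les nombres $\alpha^{q^h}$, $h\in\RQ N$, sont deux \`a deux distincts (leurs modules $|\alpha|^{q^h}$ d\'ecroissant strictement), l'ensemble des $h\in\RQ N$ tels que $T(\alpha^{q^h})=0$ est fini, et non vide dans le cas consid\'er\'e.

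La seconde \'etape consiste \`a appliquer le lemme de d\'esingularisation~\ref{lem:desingaut} (ou, plus efficacement, \cite[lemme~6]{AdFa=15}). Celui-ci fournit un entier $m\geq1$, une matrice $B(z)\in\mathrm{GL}_m(\RQ k_0(z))$ et un vecteur $\QQ g(z)$, de composantes $g_1(z),\dots,g_m(z)$ dans $\RQ k_0[[z]]$, v\'erifiant $\QQ g(z)=B(z)\QQ g(z^q)$ et tels que, d'une part, $\alpha$ ne soit pas une singularit\'e de ce nouveau syst\`eme (c'est-\`a-dire $T_B(\alpha^{q^h})\not=0$ pour tout $h\in\RQ N$) et, d'autre part, que chacun des nombres $f_1(\alpha),\dots,f_N(\alpha)$ appartienne au $\RQ k_0$-sous-espace vectoriel $V\subset\RQ C$ engendr\'e par $g_1(\alpha),\dots,g_m(\alpha)$ (en r\'ealit\'e $V$ est m\^eme \'egal au $\RQ k_0$-espace engendr\'e par $f_1(\alpha),\dots,f_N(\alpha)$). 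L'\'etablissement de ce lemme, et tout sp\'ecialement le fait qu'il pr\'eserve le corps de nombres $\RQ k_0$, est le point technique le plus d\'elicat; c'est l\`a qu'intervient de mani\`ere essentielle le travail de B.Adamczewski et C.Faverjon.

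Une fois ceci acquis, on conclut par de l'alg\`ebre lin\'eaire \'el\'ementaire. Posons $r=\dim_{\RQ k_0}V$; quitte \`a r\'eindexer les composantes de $\QQ g$, on peut supposer que $g_1(\alpha),\dots,g_r(\alpha)$ forment une base de $V$ sur $\RQ k_0$, donc en particulier une famille lin\'eairement ind\'ependante sur $\RQ k_0$. Le th\'eor\`eme~\ref{thm:indlinaut} appliqu\'e au syst\`eme $\QQ g(z)=B(z)\QQ g(z^q)$ avec $\ell=r$ (ce qui est licite, ce syst\`eme \'etant \`a coefficients dans $\RQ k_0(z)$ et r\'egulier en $\alpha$) entra\^ine alors que $g_1(\alpha),\dots,g_r(\alpha)$ sont lin\'eairement ind\'ependants sur $\OO{\RQ Q}$.

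Il ne reste plus qu'\`a transf\'erer cette ind\'ependance aux $f_i(\alpha)$. Pour $1\leq i\leq\ell$, on peut \'ecrire $f_i(\alpha)=\sum_{j=1}^{r}\lambda_{ij}\,g_j(\alpha)$ avec $\lambda_{ij}\in\RQ k_0$, puisque $f_i(\alpha)\in V$ et que $g_1(\alpha),\dots,g_r(\alpha)$ en est une base sur $\RQ k_0$; l'hypoth\`ese d'ind\'ependance lin\'eaire sur $\RQ k_0$ des $f_1(\alpha),\dots,f_\ell(\alpha)$ signifie exactement que la matrice $(\lambda_{ij})$ est de rang $\ell$, donc aussi de rang $\ell$ sur $\OO{\RQ Q}$ (le rang \'etant invariant par extension du corps de base). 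Or toute relation $\sum_{i=1}^{\ell}\mu_i\,f_i(\alpha)=0$ \`a coefficients dans $\OO{\RQ Q}$ se r\'e\'ecrit $\sum_{j=1}^{r}\bigl(\sum_{i=1}^{\ell}\mu_i\lambda_{ij}\bigr)g_j(\alpha)=0$, d'o\`u $\sum_{i=1}^{\ell}\mu_i\lambda_{ij}=0$ pour tout $j$ (ind\'ependance des $g_j(\alpha)$ sur $\OO{\RQ Q}$), puis $\mu_1=\dots=\mu_\ell=0$ par la condition de rang. Les nombres $f_1(\alpha),\dots,f_\ell(\alpha)$ sont ainsi lin\'eairement ind\'ependants sur $\OO{\RQ Q}$, ce qui ach\`everait la d\'emonstration. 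L'unique difficult\'e v\'eritable de ce sch\'ema r\'eside dans la mise en place du lemme de d\'esingularisation conservant le corps de base, le reste se r\'eduisant \`a l'application du th\'eor\`eme~\ref{thm:indlinaut} et \`a des manipulations lin\'eaires standard.
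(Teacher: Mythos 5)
Your argument is correct and follows essentially the same route as the paper's own proof (Corollaire~\ref{cor:corlinindaut}): dispose of the non-singular case via Théorème~\ref{thm:indlinaut}/Corollaire~\ref{cor:thmindlinautnondeg}, invoke a desingularization lemma (the paper's Lemme~\ref{lem:desingaut} or the Adamczewski--Faverjon lemma) to remove $\alpha$ from the singular set while controlling the $\RQ k_0$-spans of the values, and then transfer the $\OO{\RQ Q}$-independence back.

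The only noticeable divergence is in how much you extract from the desingularization. You posit only that the new system $\QQ g=B(z)\QQ g(z^q)$, of possibly different size $m$, produces values $g_1(\alpha),\dots,g_m(\alpha)$ whose $\RQ k_0$-span $V$ contains each $f_i(\alpha)$, and then you close the gap with an explicit rank argument over $\RQ k_0$ and $\OO{\RQ Q}$. This is perfectly valid, and it has the advantage of being directly compatible with the Adamczewski--Faverjon lemma, whose doubling procedure changes the size of the system. The paper's Lemme~\ref{lem:desingaut} is sharper: it produces a system of the \emph{same} size $N$ and a vector $\tilde{\QQ f}(z)=\tilde S(z)\QQ f(z)$ satisfying the nested equalities $\RQ k_0 f_1(\alpha)+\dots+\RQ k_0 f_i(\alpha)=\RQ k_0\tilde f_1(\alpha)+\dots+\RQ k_0\tilde f_i(\alpha)$ for every $i\le\ell$, which makes the first $\ell$ components of the new vector themselves $\RQ k_0$-independent at $\alpha$; Corollaire~\ref{cor:thmindlinautnondeg} then applies directly to $\tilde f_1,\dots,\tilde f_\ell$ and the equality of $\OO{\RQ Q}$-spans is immediate from the equality of $\RQ k_0$-spans, bypassing the re-indexing and the separate rank computation. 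In short, same skeleton; you trade a weaker (and more portable) hypothesis on the desingularization against a short extra linear-algebra step.
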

La démonstration de cet énoncé sera donnée avec le corollaire~\ref{cor:corlinindaut}. Il serait intéressant de l'étendre à tous les systèmes à coefficients dans $\RQ k_0(z)$ (et non seulement $\RQ k_0[z]$). De même, il serait intéressant de développer la théorie pour des transformations rationnelles plus générales, comme dans~\cite[Chapitre 3]{Zo=10} par exemple.

Soit $q,b\ge 2$ des entiers, un nombre \pre{(q,b)}automatique est un nombre dont la suite des chiffres dans son écriture en base $b$ est \pre{q}automatique, \emph{voir}~\cite[Chapters 5 \& 13]{AS=03}. C'est donc la valeur en $1/b$ de la série génératrice d'une suite automatique, qui est un cas particulier de fonction \pre{q}régulière, analytique dans le disque unité sans bord de $\RQ C$. On obtient donc comme corollaire du théorème~\ref{thm:indlinautsing} la généralisation suivante du théorème de B.Adamczewski et Y.Bu\-geaud~\cite{AdBu=07}.

\begin{cor}\label{cor:AdamBug}
Des nombres \pre{(q,b)}automatiques linéairement indépendants sur $\RQ Q$ sont linéairement indépendants sur $\OO{\RQ Q}$. 
\end{cor}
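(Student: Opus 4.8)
The plan is to realise each $(q,b)$-automatic number as the value at $z=1/b$ of one coordinate of a vector of $q$-regular power series solving a system~\eqref{eq:systintro} with a \emph{polynomial} (and invertible) matrix, and then to apply Theorem~\ref{thm:indlinautsing} over $\RQ k_0=\RQ Q$. So let $\theta_1,\dots,\theta_\ell$ be $(q,b)$-automatic numbers that are linearly independent over $\RQ Q$. By definition $\theta_i=g_i(1/b)$, where $g_i(z)=\sum_nc^{(i)}_nz^n\in\RQ Q[[z]]$ is the generating series of the $q$-automatic sequence $(c^{(i)}_n)_n$ of base-$b$ digits of $\theta_i$; this sequence being bounded, $g_i$ is analytic on the open unit disc of $\RQ C$, and $0<|1/b|<1$ since $b\ge 2$. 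Since the values $\theta_1,\dots,\theta_\ell$ are linearly independent over $\RQ Q$, so are the series $g_1,\dots,g_\ell$.

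Next I would pack $g_1,\dots,g_\ell$ into a single system. A $q$-automatic sequence is $q$-regular, hence the $\RQ Q$-vector space $V$ spanned by the union of the $q$-kernels of $(c^{(1)}_n)_n,\dots,(c^{(\ell)}_n)_n$ is finite-dimensional and stable under the sectioning maps $\Phi_j\colon(a_n)_n\mapsto(a_{qn+j})_n$ for $0\le j<q$. Completing $g_1,\dots,g_\ell$ to a $\RQ Q$-basis $f_1=g_1,\dots,f_\ell=g_\ell,f_{\ell+1},\dots,f_N$ of $V$ and writing the $\Phi_j$ in this basis, the elementary identity $f_i(z)=\sum_{j=0}^{q-1}z^j(\Phi_jf_i)(z^q)$ yields $\QQ f(z)=A(z)\QQ f(z^q)$ with $\QQ f=(f_1,\dots,f_N)$ having components in $\RQ Q[[z]]$ and $A(z)=\sum_{j=0}^{q-1}z^jB_j\in\RQ Q[z]^{N\times N}$. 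What needs care is that~\eqref{eq:systintro} also demands $A(z)$ invertible over $\RQ Q(z)$, which this bare construction need not provide: one must in general enlarge the vector by a few more $q$-regular series over $\RQ Q$ and adjust $A$ — so that, for instance, $A(0)$ becomes invertible — while keeping it polynomial. This is exactly the structure theorem describing $q$-regular functions as coordinates of solutions of invertible polynomial Mahler systems, which I would import from~\cite{Bk=94} (see also~\cite[Theorem~5.1.2]{Ni=96}); I expect this realisation, specifically the invertibility of $A$, to be the main obstacle, the remainder being routine.

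With such a system in hand, Theorem~\ref{thm:indlinautsing} applies with $\RQ k_0=\RQ Q$ and $\alpha=1/b$ (the hypotheses hold: $A$ has entries in $\RQ Q[z]$, the components of $\QQ f$ lie in $\RQ Q[[z]]$, $0<|1/b|<1$, and $1\le\ell\le N$): since $f_1(1/b)=\theta_1,\dots,f_\ell(1/b)=\theta_\ell$ are linearly independent over $\RQ Q$ by assumption, they are linearly independent over $\OO{\RQ Q}$, which is the desired conclusion. In particular, applying this to $1/b$ together with an arbitrary non-rational $(q,b)$-automatic number $\theta$ gives $\theta\notin\OO{\RQ Q}$, recovering the theorem of~\cite{AdBu=07} that a $(q,b)$-automatic number is transcendental or rational.
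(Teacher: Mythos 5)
Your argument follows exactly the route the paper intends: realise each $(q,b)$-automatic number as the value at $z=1/b$ of a coordinate of a $q$-regular vector solving a polynomial Mahler system, then invoke Theorem~\ref{thm:indlinautsing} with $\RQ k_0=\RQ Q$ and $\alpha=1/b$. You correctly flag the one delicate point the paper leaves implicit — that the $q$-kernel construction must be enlarged (via~\cite{Bk=94} or~\cite[Theorem~5.1.2]{Ni=96}) to make the matrix $A(z)$ invertible — and your closing remark on adjoining the constant function $1$ to recover the Adamczewski--Bugeaud theorem matches the paper's own discussion.
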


La transcendance des nombres automatiques irrationnels en découle en ajoutant au besoin la fonction $1$ comme composante supplémentaire au vecteur $\QQ f(z)$ solution du système d'équations fonctionnelles~\eqref{eq:systintro}. Le nouveau système à considérer s'écrit alors
$$\left(\begin{matrix}1\\ \QQ f(z)\end{matrix}\right) = \left(\begin{matrix}1 &0\\ 0 &A(z)\end{matrix}\right)\left(\begin{matrix}1\\ \QQ f(z^q)\end{matrix}\right)
\enspace.$$ 
Ceci fournit donc une démonstration alternative à celle de~\cite{AdBu=07}.

\medskip

Lorsque le système~\eqref{eq:systintro} (avec $A(z)$ à coefficients fonctions rationnelles) admet une matrice fondamentale $U(z)$ (et non seulement un vecteur $\QQ f(z)$) solution à coefficients analytiques dans le disque unité sans bord de $\RQ C$, on a le résultat direct suivant.
\begin{thm}\label{thm:indlin}
Sous les hypothèses ci-dessus, soit $\QQ f(z)$ un vecteur solution de~\eqref{eq:systintro} dont les composantes $f_1(z),\dots,f_N(z)$ sont dans $\OO{\RQ Q}(z,U(z))$ et linéairement indépendantes sur $\OO{\RQ Q}(z)$.

Soit $\alpha\in\OO{\RQ Q}\subset\RQ C$, $0<|\alpha|<1$, tel que $T(\alpha^{q^\ell})\not=0$ pour $\ell\in\RQ N$, alors les $N$ nombres $f_1(\alpha),\dots, f_N(\alpha)$ sont linéairement indépendants sur $\OO{\RQ Q}$.
\end{thm}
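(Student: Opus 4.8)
The idea is to reduce the linear independence statement over $\OO{\RQ Q}$ to the algebraic independence transfer already granted by Theorem~\ref{thm:indalg}, exploiting the extra structure coming from the fundamental matrix $U(z)$. First I would observe that since $f_1(z),\dots,f_N(z)$ lie in $\OO{\RQ Q}(z,U(z))$ and form a solution vector of~\eqref{eq:systintro}, they are $\OO{\RQ Q}$-linear combinations of the columns of $U(z)$ up to a base change; in particular the $\OO{\RQ Q}(z)$-vector space they span is stable in a controlled way under the functional operator $\QQ f(z)\mapsto A(z)^{-1}\QQ f(z)$ evaluated along the $q$-power iteration. The hypothesis that $0<|\alpha|<1$ and $T(\alpha^{q^\ell})\neq 0$ for all $\ell\in\RQ N$ is exactly what is needed to apply Theorem~\ref{thm:indalg} at the point $\alpha$.

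The key step is the following. Suppose, for contradiction, that $\lambda_1 f_1(\alpha)+\cdots+\lambda_N f_N(\alpha)=0$ for some $\lambda_i\in\OO{\RQ Q}$ not all zero. Apply Theorem~\ref{thm:indalg} to the linear polynomial $P(X)=\sum_i\lambda_i X_i$: there exists $Q(z,X)\in\OO{\RQ Q}[z,X]$, of the same degree in $X$ as $P$ — hence \emph{of degree $1$ in $X$}, i.e.\ $Q(z,X)=\sum_i q_i(z) X_i + q_0(z)$ with $q_i\in\OO{\RQ Q}[z]$ — such that $Q(z,\QQ f(z))=0$ identically and $Q(\alpha,X)=P(X)$. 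The constant term $q_0(z)$ must vanish because $P$ has no constant term and because, more structurally, the $f_i$ are homogeneous in the columns of $U(z)$; alternatively one argues that $q_0(z)=0$ by evaluating the relation at a point where the $f_i$ can be made to vanish, or by passing to the associated homogeneous relation among the columns of $U(z)$. Thus $\sum_i q_i(z) f_i(z)=0$ with $q_i\in\OO{\RQ Q}[z]$ not all zero (since $q_i(\alpha)=\lambda_i$ are not all zero), contradicting the assumed $\OO{\RQ Q}(z)$-linear independence of $f_1(z),\dots,f_N(z)$.

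The part that requires genuine care — and where I expect the main obstacle to lie — is ruling out the constant term $q_0(z)$ and, more generally, making sure that a degree-$1$ relation $Q$ produced by Theorem~\ref{thm:indalg} really descends to a \emph{linear} (not merely affine) relation among the functions that contradicts the hypothesis. This is where the fundamental matrix $U(z)$ is indispensable: because $U(z)$ is invertible, the functions $1,f_1(z),\dots,f_N(z)$ (when $1$ is not already among the $f_i$) may fail to be $\OO{\RQ Q}(z)$-linearly independent only if some $f_i$ is itself in $\OO{\RQ Q}(z)$, a situation one handles separately, or one simply notes that an affine relation $\sum q_i f_i = -q_0$ with $q_0\neq 0$ would force $\QQ f(z)$ to satisfy an inhomogeneous constraint incompatible with $\QQ f(z)=A(z)\QQ f(z^q)$ together with the $\OO{\RQ Q}(z)$-independence of its components. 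Once this subtlety is dispatched, the conclusion — linear independence of $f_1(\alpha),\dots,f_N(\alpha)$ over $\OO{\RQ Q}$ — is immediate, and Theorem~\ref{thm:indlin} follows as a clean corollary of Theorem~\ref{thm:indalg}.
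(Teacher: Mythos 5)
You correctly identify the crux of the matter — Theorem~\ref{thm:indalg} applied to the linear form $P(X)=\sum_i\lambda_i X_i$ produces a polynomial $Q(z,X)=\sum_i q_i(z)X_i+q_0(z)$ of degree $1$, with $q_i(\alpha)=\lambda_i$ and $q_0(\alpha)=0$, and the whole difficulty is to rule out $q_0\neq 0$, since an \emph{affine} relation $\sum_i q_i(z)f_i(z)=-q_0(z)$ does not contradict the hypothesis that $f_1,\dots,f_N$ are linearly independent over $\OO{\RQ Q}(z)$. But the reasons you offer for $q_0\equiv 0$ do not hold up. Invertibility of $U(z)$ does not imply that $1,f_1,\dots,f_N$ are $\OO{\RQ Q}(z)$-linearly independent unless some $f_i$ is rational, and an affine constraint is perfectly compatible with $\QQ f(z)=A(z)\QQ f(z^q)$: Example~\ref{exe:irreginfini}(2) in the paper exhibits a solution vector $(1,f(z),f(z^3))$ tied by the inhomogeneous relation $(z^2+z-1)f(z^3)+f(z)-\tfrac{z+z^2}{1-z^3}=0$, even though $1,f(z)$ are independent over $K$ and $f$ is transcendental. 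This shows that your elimination of the constant term cannot go through by the arguments you sketch.

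The paper's actual proof (Corollaire~\ref{cor:indlin}) takes an essentially different route, which is exactly designed to bypass this obstacle. It writes $\QQ f(z)=U(z)\QQ\mu$ via Proposition~\ref{wronskien}, applies Corollaire~\ref{cor:indalgi} to the relation $\langle\QQ\lambda,\QQ f(\alpha)\rangle=0$ to get $\langle\QQ\lambda,\QQ f(z)\rangle=(z-\alpha)R(z,U(z))$ with $R\in\OO{\RQ Q}[z,X]$ ($X$ an $N\times N$ matrix of variables), and then lets the Galois group $G$ of~\eqref{eq:systintro} act by $X\mapsto Xy$, yielding $\langle\QQ\lambda,U(\alpha)y\QQ\mu\rangle=0$ for every $y\in G$. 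Then Lemme~\ref{lem:indlin} — relating $K$-linear independence of the components of $\QQ f$ to the dimension of its Galois orbit — shows that the vectors $y\QQ\mu$, $y\in G$, span $\OO{\RQ Q}^N$. Since $\QQ\lambda\neq 0$, this forces $\det U(\alpha)=0$, and tracking this through the determinant identity $\det U(z)=\det A(z)\det U(z^q)$ produces a zero of $\det A$ at some $\alpha^{q^\ell}$, contradicting the non-singularity hypothesis. This Galois-theoretic averaging over the group is the missing ingredient; without it, the single affine relation $Q$ coming from Theorem~\ref{thm:indalg} cannot be converted into a homogeneous contradiction, and your argument has a genuine gap at exactly the point you flagged as requiring care.
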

C'est le corollaire~\ref{cor:indlin}, dont la démonstration est donnée à la section~\ref{sec:independancelin}. On y montre également par les exemples~\ref{exe:irreginfini} que toutes les hypothèses du théorème~\ref{thm:indlin} sont nécessaires.

Enfin, nous établissons avec le corollaire~\ref{cor:enonceconvergence} de la section~\ref{sec:convergence} une condition nécessaire et suffisante à l'existence d'une matrice fondamentale de solutions de~\eqref{eq:systintro} à coefficients méromorphes dans le disque unité sans bord de $\RQ C$.

\section{Corps à endomorphisme}\label{sec:corpsendo}
Un \emph{\pre{\sigma}anneau} est un anneau commutatif, unitaire, $\AA A$ muni d'un endomorphisme unitaire (\emph{i.e.} satisfaisant $\sigma(1)=1$). Un \emph{\pre{\sigma}corps} est un \pre{\sigma}anneau $(\AA A,\sigma)$ tel que $\AA A$ soit un corps. Les éléments $y$ d'un \pre{\sigma}anneau (\emph{resp.} \pre{\sigma}corps) satisfaisant $\sigma(y)=y$ sont appelés les \emph{\pre{\sigma}constantes}, ils forment un sous-anneau (\emph{resp.} sous-corps), \emph{voir}~\cite[Définition 1.1.1]{NPQ=12}. Un homomorphisme de \pre{\sigma}anneau (\emph{resp.} \pre{\sigma}corps) est un homomorphisme d'anneau (\emph{resp.} corps) qui commute aux \pre{\sigma}actions.

Soit $q\ge2$ un entier, $\RQ k\subset\RQ C$ un sous-corps algébriquement clos dans $\RQ C$ et $\sigma$ l'endomorphisme (unitaire) du corps $\RQ k(z)$ défini par $\sigma(a(z))=a(z^q)$, de sorte que $(\RQ k(z),\sigma)$ est un \pre{\sigma}corps, de corps des \pre{\sigma}constantes égal à $\RQ k$. Avec ces notations le système d'équations fonctionnelles~\eqref{eq:systintro} s'écrit
$$\QQ f = A\sigma(\QQ f)\quad\mbox{ou}\quad \sigma(\QQ f) = A^{-1}\QQ f
\enspace,$$
avec $A$ une matrice inversible $N\times N$ à coefficients dans $\RQ k(z)$.

\begin{definition}\label{def:matricefondam}
Soit $(\AA A,\sigma)$ un \pre{\sigma}anneau. Une \emph{matrice fondamentale} de solutions dans $\AA A$ du système d'équations fonctionnelles~\eqref{eq:systintro} est une matrice $U$ inversible, $N\times N$ à coefficients dans $\AA A$ satisfaisant $U=A\sigma(U)$.
\end{definition}

Nous considérons maintenant un \pre{\sigma}corps $\AA K$ contenant $\RQ k(z)$ et tous les coefficients d'une matrice fondamentale de solutions de~\eqref{eq:systintro}. Et nous montrons que tout vecteur solution de~\eqref{eq:systintro} dans $\AA K^N$ est une combinaison linéaire à coefficients dans $\RQ k$ des vecteurs colonnes de la matrice fondamentale.

\begin{prop}\label{wronskien}
Soit $\AA K$ un \pre{\sigma}corps, de corps des \pre{\sigma}constantes égal à ${\RQ k}$ et contenant les coefficients d'une matrice fondamentale de solutions $U$. Alors, tout vecteur $\QQ y$, de composantes dans $\AA K$, solution de~\eqref{eq:systintro}, est combinaison linéaire à coefficients dans ${\RQ k}$, des colonnes de $U$.
\end{prop}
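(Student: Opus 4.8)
The plan is to use invertibility of the fundamental matrix $U$ to write an arbitrary solution $\QQ y$ of~\eqref{eq:systintro} as $U$ times a coordinate vector, and then to check that this coordinate vector is $\sigma$-invariant, hence has all its entries in the field of $\sigma$-constants ${\RQ k}$.

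Concretely, I would set $\QQ c:=U^{-1}\QQ y$, a column vector with entries in $\AA K$ (legitimate since $U$ is invertible with entries in the field $\AA K$, so $U^{-1}$ also has entries in $\AA K$), so that $\QQ y=U\QQ c$. It then suffices to prove $\sigma(\QQ c)=\QQ c$: each entry of $\QQ c$ is then a $\sigma$-constant, i.e. lies in ${\RQ k}$, and the relation $\QQ y=U\QQ c$ exhibits $\QQ y$ as a ${\RQ k}$-linear combination of the columns of $U$, which is exactly the assertion.

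To compute $\sigma(\QQ c)$, recall that a ring endomorphism acts entrywise on matrices and vectors, commutes with matrix multiplication, and sends an invertible matrix $M$ to an invertible matrix, with $\sigma(M^{-1})=\sigma(M)^{-1}$ (apply $\sigma$ to $MM^{-1}=I$). From the defining relation $U=A\sigma(U)$ one gets $A=U\sigma(U)^{-1}$. Applying $\sigma$ to $\QQ y=U\QQ c$ gives $\sigma(\QQ y)=\sigma(U)\sigma(\QQ c)$; substituting both into the equation $\QQ y=A\sigma(\QQ y)$ satisfied by $\QQ y$ yields
$$U\QQ c=\QQ y=A\sigma(\QQ y)=U\sigma(U)^{-1}\sigma(U)\sigma(\QQ c)=U\sigma(\QQ c)\enspace,$$
and left-multiplying by $U^{-1}$ gives $\QQ c=\sigma(\QQ c)$, as wanted.

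There is no genuine obstacle here — this is the standard torsor/\emph{wronskien} computation — the only points to keep track of being that $\AA K$ is a field (so that $U^{-1}$ has entries in $\AA K$ and $\QQ c$ is well defined) and that $\sigma$ is a field endomorphism, in particular injective, so that $\sigma(U)$ is again invertible with $\sigma(U)^{-1}=\sigma(U^{-1})$; both are built into the hypotheses. A more structural rephrasing would be: the solution vectors of~\eqref{eq:systintro} in $\AA K^N$ form a ${\RQ k}$-vector space, the $N$ columns of $U$ are ${\RQ k}$-linearly independent because $U$ is invertible and ${\RQ k}\subset\AA K$, and the map $\QQ y\mapsto U^{-1}\QQ y$ just computed lands in ${\RQ k}^N$, so that space has dimension exactly $N$ and the columns of $U$ form a basis — but the direct computation above already gives the statement.
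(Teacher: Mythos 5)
Your proof is correct and follows exactly the same route as the paper's: both write $\QQ y=U\QQ c$ (the paper obtains this via a linear dependence relation with $b_0\neq 0$, which is just $\QQ c=U^{-1}\QQ y$ in different clothing), apply $\sigma$, multiply on the left by $A$ using $A\sigma(U)=U$, compare, and conclude $\sigma(\QQ c)=\QQ c$ by the invertibility (equivalently, $\AA K$-linear independence of the columns) of $U$.
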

\begin{proof}
Le vecteur $\QQ y$ et les colonnes de $U$ appartiennent à $\AA K^N$ et sont donc liés par une relation linéaire non triviale à coefficients dans $\AA K$, soit $\QQ yb_0=\sum_{j=1}^N\QQ u_{j}b_{j}$ avec $b_0,b_{j}\in\AA K$. Comme les vecteurs $\QQ u_{j}$ sont indépendants ($\det(U)\not=0$) on a $b_0\not=0$ et, quitte à diviser par $b_0$, on peut supposer $b_0=1$:
\begin{equation}\label{eqdanspreuvewronskien}
\QQ y = \sum_{j=1}^N \QQ u_{j}b_{j}
\enspace.\end{equation}
En faisant agir $\sigma$ on obtient $\sigma(\QQ y)=\sum_{j=1}^N\sigma(\QQ u_{j})\sigma(b_{j})$, puis en multipliant à gauche par $A$ il vient $\QQ y=\sum_{j=1}^N\QQ u_{j}\sigma(b_{j})$, car $A\sigma(\QQ y) = \QQ y$ et $A\sigma(\QQ u_{j}) = \QQ u_{j}$ par~\eqref{eq:systintro}. En retranchant~\eqref{eqdanspreuvewronskien} on obtient finalement
$$\sum_{j=1}^N\QQ u_{j}(\sigma(b_{j})-b_{j}) = 0$$
qui, vu l'indépendance des vecteurs $\QQ u_{j}$, implique $\sigma(b_{j})=b_{j}$ pour tout $j=1,\dots,N$. Enfin, le corps des \pre{\sigma}constantes de $\AA K$ étant ${\RQ k}$ cela montre $b_{j}\in{\RQ k}$, $j=1,\dots,N$, dans~\eqref{eqdanspreuvewronskien}, et $\QQ y$ est bien combinaison linéaire à coefficients dans ${\RQ k}$ des vecteurs colonnes $\QQ u_{j}$ de la matrice $U$.
\end{proof}

On note $K:=\bigcup_{\ell\in\RQ N}\RQ k(z^{q^{-\ell}})$ la \emph{clôture inversive} de $\RQ k(z)$ que nous munissons de l'automorphisme, encore noté $\sigma$, étendant l'endomorphisme $\sigma$ de $\RQ k(z)$. 

Soit $1\le r\le N$ un entier et $V=(u_{i,j})_{\stackrel{1\le i\le N}{\scriptscriptstyle 1\le j\le r}}$ la matrice des composantes de $r$ vecteurs $\QQ u_1,\dots,\QQ u_r$ solutions de~\eqref{eq:systintro}. L'action de l'endomorphisme $\sigma$ s'étend naturellement au corps $K(V)$ en posant $\sigma(V(z))=V(z^q)$.

Soit $X:=\big(X_{i,j}\big)_{\stackrel{1\leq i\leq N}{\scriptscriptstyle 1\leq j\leq r}}$ une matrice de variables indé\-pen\-dan\-tes. Notons $K[X]$ l'anneau des polynômes en les variables $X_{i,j}$ à coefficients dans $K$ et $\AA P$ l'idéal premier de $K[X]$ des relations de dépendance algébrique sur $K$ entre les fonctions $(u_{i,j})_{\stackrel{1\leq i\leq N}{\scriptscriptstyle 1\leq j\leq r}}$. Introduisons les automorphismes de \pre{K}algèbres, inverses l'un de l'autre,
$$\begin{aligned}
\tilde\sigma:&\kern8pt K[X]&\longrightarrow&\kern10pt K[X]\\
&\kern18pt z&\longmapsto&\kern17pt z^{q}\\
&\kern16pt X&\longmapsto&\kern2pt A(z)^{-1}X
\end{aligned}
\hskip45pt
\begin{aligned}
\tilde\sigma^{-1}:&\kern8pt K[X]&\longrightarrow&\kern10pt K[X]\\
&\kern18pt z&\longmapsto&\kern13pt z^{1/q}\\
&\kern16pt X&\longmapsto&\kern2pt A(z^{1/q})X
\end{aligned}\enspace.$$
Ceci fait de $K[X]$ une \pre{\tilde\sigma}algèbre sur le \pre{\sigma}corps $K$. Pour $P\in K[X]$ on vérifie
$$\tilde\sigma(P)(V)=\sigma(P(V))\quad\mbox{et}\quad
P(V)=\sigma(\tilde\sigma^{-1}(P)(V))
\enspace,$$
car $V=A\sigma(V)$. En particulier, $\tilde\sigma(\AA P)=\AA P$ et l'action de $\tilde\sigma$ coïncide avec celle de $\sigma$ sur $K[V]$ après spécialisation de $X$ en $V$. Ainsi, la \pre{\sigma}algèbre $K[V]$ est isomorphe à la \pre{\tilde\sigma}algèbre $K[X]/\AA P$.

\section{Groupes de Galois}\label{sec:Galois}
Dans cette section on se place sur un sous-corps $\RQ k$ algé\-bri\-que\-ment clos de $\RQ C$.
On notera alors $K:=\bigcup_{\ell\in\RQ N}\RQ k(z^{q^{-\ell}})$ la clôture inversive de $\RQ k(z)$, telle qu'introduite à la fin de la section~\ref{sec:corpsendo} précédente.

Reprenons le système d'équations fonctionnelles~\eqref{eq:systintro} et une matrice fondamentale de solutions $U=U(z)$. Posons $r=N$ dans les notations introduites à la fin de la section~\ref{sec:corpsendo}. On a alors la matrice $X:=\big(X_{i,j}\big)_{1\leq i,j\leq N}$ de variables indépendantes et la \pre{K}algèbre $K[X]$ sur laquelle $\tilde\sigma$ agit par $\tilde\sigma(X)=A(z)^{-1}X$ et $\tilde\sigma(z)=z^q$, c'est un \pre{\tilde\sigma}anneau. Comme $U$ est une matrice fondamentale de solutions de~\eqref{eq:systintro} l'idéal premier $\AA P=(P\in K[X];P(z,U(z))=0)$ est un \pre{\tilde\sigma}idéal, {\it i.e.} $\tilde\sigma(\AA P)\subset\AA P$,~\cite[Définition 2.1.1]{NPQ=12}. L'action de $\tilde\sigma$ sur la \pre{K}algèbre intègre
$$K[X]/\AA P \simeq K[U]
\enspace,$$
déduite de celle sur $K[X]$, coïncide avec l'extension naturelle de $\sigma$ à $K[U]$~: $\sigma(u_{i,j}(z))=u_{i,j}(z^q)=\tilde\sigma(u_{i,j})(z)$. Ainsi, la \pre{K}algèbre $K[U]$ est un \pre{\sigma}anneau intègre dont le corps des \pre{\sigma}constantes est égal à ${\RQ k}$. 

\begin{definition}\label{def:PV}
Une \pre{\sigma}algèbre $\AA A$ sur $K$ est un anneau de \emph{Picard-Vessiot} du système d'équations fonctionnelles~\eqref{eq:systintro} s'il existe une matrice fondamentale de solutions de~\eqref{eq:systintro} telle que $\AA A=K[\det(U)^{-1},U]$ et $\AA A$ ne contient aucun \pre{\sigma}idéal non nul.
\end{definition}

On peut toujours construire un anneau de Picard-Vessiot d'un système d'équations fonctionnelles de la forme~\eqref{eq:systintro} comme le quotient de la \pre{\tilde\sigma}algèbre $K[\det(X)^{-1},X]$ sur $K$ par un \pre{\tilde\sigma}idéal qui est maximal parmi les \pre{\tilde\sigma}idéaux de $K[\det(X)^{-1},X]$. Puisque le corps des \pre{\sigma}constantes de $K$ est supposé algébriquement clos, il y a unicité de l'anneau de Picard-Vessiot de~\eqref{eq:systintro}, à un isomorphisme de \pre{\tilde\sigma}algèbre sur $K$ près, \emph{cf.}~\cite[Proposition 1.9]{vdPS=97}. 

Les \pre{\sigma}idéaux maximaux d'une \pre{\sigma}algèbre sont radicaux et donc les anneaux de Picard-Vessiot n'ont pas d'éléments nilpotents, [\emph{ibidem}, Lemma 1.7].

Nous montrons que, $K$ étant de caractéristique zéro et de corps des \pre{\sigma}constantes $\RQ k$ algébriquement clos, il existe un anneau de Picard-Vessiot de~\eqref{eq:systintro} contenant les composantes de vecteurs solutions donnés à l'avance.

\begin{prop}\label{prop:PV}
Soit $1\le r\le N$ et $\QQ u_1,\dots,\QQ u_r$ des vecteurs solutions de~\eqref{eq:systintro}, alors il existe un anneau de Picard-Vessiot de~\eqref{eq:systintro} sur $K$, contenant les composantes de $\QQ u_1,\dots,\QQ u_r$. Si $\QQ u_1,\dots,\QQ u_r$ sont linéairement indépendants sur $\RQ k$, il existe une matrice fondamentale de solutions de~\eqref{eq:systintro} dont les $r$ première colonnes sont les vecteurs $\QQ u_1,\dots,\QQ u_r$.
\end{prop}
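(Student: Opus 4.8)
The plan is to construct the required Picard-Vessiot ring explicitly as a quotient of a polynomial ring by a well-chosen $\tilde\sigma$-ideal, making sure the columns we need are present. First I would enlarge the matrix $V=(u_{i,j})_{1\le i\le N,\,1\le j\le r}$ formed by the given solution vectors $\QQ u_1,\dots,\QQ u_r$ into a full fundamental matrix: since $\det(U)\neq 0$ is required, I would first show that if $\QQ u_1,\dots,\QQ u_r$ are linearly independent over $\RQ k$, then they are linearly independent over $K$. This follows from Proposition~\ref{wronskien} applied in reverse: if they were $K$-linearly dependent, a relation $\sum c_j\QQ u_j=0$ with $c_j\in K$ not all zero would, after applying $\sigma$ and using $A\sigma(\QQ u_j)=\QQ u_j$, force the coefficients into $\RQ k$ after scaling, contradicting $\RQ k$-independence. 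Granted $K$-independence, one completes $\QQ u_1,\dots,\QQ u_r$ to a $K$-basis of the solution space inside a sufficiently large $\sigma$-field extension, or—more constructively—one works inside $K[X]/\AA P$ where $X$ is the full $N\times N$ matrix of variables and $\AA P$ is the ideal of algebraic relations over $K$ satisfied by some fundamental matrix $U$ whose first $r$ columns are $\QQ u_1,\dots,\QQ u_r$; the existence of such a $U$ with analytic (or at least $\sigma$-compatible) entries is what the completion step provides.

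Next, following the construction recalled just before the statement, I would take the $\tilde\sigma$-algebra $K[\det(X)^{-1},X]$ over $K$ and choose a $\tilde\sigma$-ideal $\AA M$ that is maximal among the $\tilde\sigma$-ideals containing the ideal $(\AA P)$ of relations satisfied by the chosen $U$. Since $(\AA P)$ is a $\tilde\sigma$-ideal (because $\tilde\sigma(\AA P)=\AA P$, as verified in Section~\ref{sec:corpsendo}) and does not contain $1$, such a maximal $\tilde\sigma$-ideal $\AA M$ exists by Zorn's lemma and still avoids $1$. The quotient $\AA A := K[\det(X)^{-1},X]/\AA M$ is then a $\sigma$-algebra over $K$ with no nonzero $\sigma$-ideal, hence a Picard-Vessiot ring of~\eqref{eq:systintro} by Definition~\ref{def:PV}; it is reduced, since $\sigma$-maximal ideals are radical (cited above from \cite{vdPS=97}, Lemma 1.7). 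The image $\bar U$ of $X$ in $\AA A$ is a fundamental matrix of solutions, and because $\AA M\supset(\AA P)$ the specialization map factors through $K[X]/\AA P\simeq K[U]$, so the first $r$ columns of $\bar U$ reproduce exactly $\QQ u_1,\dots,\QQ u_r$ (here one uses that the entries $u_{i,j}$ of $U$, being meromorphic on the unit disk, generate a domain on which $\sigma$ acts freely, so no collapse occurs). This proves the first assertion; the second is then immediate, since by construction the first $r$ columns of $\bar U$ are the prescribed vectors.

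The main obstacle I anticipate is the completion step: producing a fundamental matrix $U$ whose first $r$ columns are $\QQ u_1,\dots,\QQ u_r$ \emph{before} one has a Picard-Vessiot ring in hand. The clean way is to first invoke the (unconditional) existence of \emph{some} Picard-Vessiot ring $\AA A_0$ by the standard construction, observe that its total solution space over $\RQ k$ has dimension $N$, embed the $\RQ k$-span of $\QQ u_1,\dots,\QQ u_r$ into it (using the just-established $\RQ k$-linear, hence $K$-linear, independence together with Proposition~\ref{wronskien} to identify all solutions in $\AA A_0$ as $\RQ k$-combinations of a fixed fundamental matrix's columns), complete to an $\RQ k$-basis, and assemble those $N$ solution vectors into the desired $U$; invertibility of $U$ is exactly the $\RQ k$-linear independence of a full basis, which forces $\det U\neq 0$ again via Proposition~\ref{wronskien}. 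The only subtlety is checking that the $\tilde\sigma$-ideal one ends up quotienting by does not accidentally identify two of the chosen columns or kill $\det X$; this is handled by the maximality of $\AA M$ among \emph{proper} $\tilde\sigma$-ideals together with the fact that $1\notin(\AA P)$ and $\det X$ is already inverted in the ambient ring, so its image is automatically a unit, not zero.
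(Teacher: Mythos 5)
Your proposal does not match the paper's proof, and it has a genuine gap at exactly the point you yourself identified as the main obstacle. The proposition requires a Picard--Vessiot ring that \emph{literally} contains the components of the concrete vectors $\QQ u_1,\dots,\QQ u_r$ (this literal containment is what gets used later, e.g.\ in Lemma~\ref{lem:extreg} and Lemma~\ref{lem:indlin}, where one exploits meromorphicity of the elements). Your completion step does not achieve this: you build an abstract Picard--Vessiot ring $\AA A_0$ over $K$ by taking a maximal $\tilde\sigma$-ideal in $K[\det(X)^{-1},X]$, and then ``embed the $\RQ k$-span of $\QQ u_1,\dots,\QQ u_r$ into it.'' But there is no natural map from the concrete solution space (e.g.\ formal or meromorphic series) to the abstract solution space of $\AA A_0$; the ``embedding'' is merely an abstract isomorphism of $\RQ k$-vector spaces, so the resulting fundamental matrix $U$ has entries in $\AA A_0$, and the ring $K[\det(U)^{-1},U]\subset\AA A_0$ contains copies of the $\QQ u_i$ up to isomorphism, not the $\QQ u_i$ themselves. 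In general (when $A(0)\neq\RR{Id}$, say) there is no reason a concrete fundamental matrix with the prescribed columns even exists, which is why this cannot be circumvented by a direct construction.

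The paper resolves exactly this by working \emph{over} the $\sigma$-field $\AA k:=K(\QQ u_1,\dots,\QQ u_r)$, which contains the $\QQ u_i$ by construction: it builds a Picard--Vessiot ring $\AA A$ of the same system over $\AA k$ (this uses the existence result \cite[Proposition 2.1.4]{NPQ=12}), takes a fundamental matrix $U$ generating $\AA A$ over $\AA k$, uses Proposition~\ref{wronskien} to show that each $\QQ u_i$ is an $\RQ k$-linear combination of the columns of $U$ and hence lies in the subring $R:=K[\det(U)^{-1},U]$, and then invokes \cite[Corollary 1.24]{vdPS=97} --- using that $K$ is perfect, $\RQ k$ is algebraically closed, $R$ is finitely generated and reduced with $\sigma$-constants $\RQ k$ --- to conclude that $R$ is itself a Picard--Vessiot ring over $K$. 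This two-tier passage, from $\AA k$ back down to $K$, is the mechanism you are missing.

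A secondary but also real problem: in your construction you assert that ``no collapse occurs'' when passing from $K[U]$ to the quotient $K[\det(X)^{-1},X]/\AA M$ with $\AA M\supset(\AA P)$ maximal among $\tilde\sigma$-ideals. But the inclusion can be strict: that $K[U]$ has no proper nonzero $\sigma$-ideal (equivalently, $\AA M=(\AA P)$) is precisely what has to be proved, and it is not a consequence of the entries of $U$ being meromorphic. Establishing that $K[\det(U)^{-1},U]$ is already a Picard--Vessiot ring, under the stated hypotheses on $K$ and $\RQ k$, is exactly the role played by \cite[Corollary 1.24]{vdPS=97} in the paper, together with \cite[Lemma 1.8]{vdPS=97} to control the $\sigma$-constants. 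Your initial remark --- that $\RQ k$-linear independence implies $K$-linear independence of the solution vectors, by a Wronskian-type argument using a minimal relation and the functional equation --- is correct and is also the content of Proposition~\ref{wronskien}, but by itself it does not repair the containment issue.
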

\begin{proof}
Le corps $\AA k=K(\QQ u_1,\dots,\QQ u_r)$ engendré sur $K$ par les composantes des vecteurs $\QQ u_i$ est naturellement muni d'une extension de l'endomorphisme $\sigma$, puisque ces vecteurs satisfont $\sigma(\QQ u_i)=A^{-1}\QQ u_i$, $i=1,\dots,r$. D'après~\cite[Proposition 2.1.4]{NPQ=12} il existe un anneau de Picard-Vessiot $\AA A$ de \eqref{eq:systintro} sur $\AA k$ (unique à isomorphisme près). La \pre{\sigma}algèbre $\AA A$ étant un anneau de Picard-Vessiot sur $\AA k$, n'a pas d'élément nilpotent. Soit $U$ une matrice fondamentale de solutions de~\eqref{eq:systintro} engendrant $\AA A=\AA k[\det(U)^{-1},U]$ sur $\AA k$. Grâce à la proposition~\ref{wronskien}, les vecteurs $\QQ u_i$ sont combinaisons linéaires à coefficients dans $\RQ k$ des colonnes de la matrice $U$ et donc appartiennent à $R:=K[\det(U)^{-1},U]\subset\AA A$. Cette \pre{\sigma}algèbre $R$, de type fini sur $K$, n'a pas d'élément nilpotent (puisque $\AA A$ n'en a pas) et son corps des \pre{\sigma}constantes est $\RQ k$ (corps des \pre{\sigma}constantes de $K$ supposé algébriquement clos), d'après~\cite[Lemma 1.8]{vdPS=97}. La proposition~\ref{wronskien} implique encore que toute matrice fondamentale de solutions de~\eqref{eq:systintro} dont les coefficients sont dans $\AA A$, engendre $K[\det(U)^{-1},U]$ sur $K$. Enfin, le corps $K$ est parfait (étant de caractéristique zéro), de corps des \pre{\sigma}constantes $\RQ k$ algébriquement clos. Ainsi, les hypothèses de \cite[Corollary 1.24]{vdPS=97} sont satisfaites et cet énoncé entraîne que $R$ est un anneau de Picard-Vessiot de~\eqref{eq:systintro} sur $K$, contenant les composantes de $\QQ u_1,\dots,\QQ u_r$.

Lorsque les vecteurs $\QQ u_1,\dots,\QQ u_r$ sont linéairement indépendants sur $\RQ k$, nous pouvons les substituer dans une matrice fondamentale de solutions, le déterminant et les coefficients n'étant multipliés que par des éléments de $\RQ k$.
\end{proof}

Un anneau de Picard-Vessiot est unique à isomorphisme près, mais il n'est pas intègre en général. En revanche, d'après~\cite[Corollary 1.16 et Lemma 1.26]{vdPS=97} il est toujours un produit d'anneaux de Picard-Vessiot intègres, tous isomorphes à un anneau de Picard-Vessiot sur $K$ du système d'équations fonctionnelles
\begin{equation}\label{eq:systitere}
\QQ u=A\sigma(A)\dots\sigma^{\ell-1}(A)\sigma^\ell(\QQ u)
\end{equation}
pour un certain $\ell\in\RQ N^\times$. En combinant ce fait avec la proposition~\ref{prop:PV} on obtient:

\begin{prop}\label{prop:PVitere}
Dans les notations de la proposition~\ref{prop:PV} il existe un entier positif $\ell$ et un \pre{\sigma^\ell}corps contenant $K(\QQ u_1,\dots,\QQ u_r)$, qui est le corps des fractions d'un anneau de Picard-Vessiot du système d'équations fonctionnelles~\eqref{eq:systitere}, intègre.
\end{prop}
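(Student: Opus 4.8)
The plan is to combine Proposition~\ref{prop:PV} with the structural decomposition of a (possibly non-integral) Picard--Vessiot ring that was recalled just before the statement, namely that such a ring is a finite product of integral Picard--Vessiot rings, each of which is a Picard--Vessiot ring over $K$ for the iterated system~\eqref{eq:systitere} associated with the power $\sigma^\ell$, for a suitable $\ell\in\RQ N^\times$. First I would invoke Proposition~\ref{prop:PV}: since $\QQ u_1,\dots,\QQ u_r$ are solutions of~\eqref{eq:systintro} with components in a $\sigma$-field, there is a Picard--Vessiot ring $R$ of~\eqref{eq:systintro} over $K$ whose fraction ring contains all the components of the $\QQ u_i$, equivalently $R\supset K[\QQ u_1,\dots,\QQ u_r]$ after clearing the determinant.

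Next I would apply the cited decomposition \cite[Corollary 1.16 et Lemma 1.26]{vdPS=97}: write $R=R_1\times\cdots\times R_s$ with each $R_t$ an integral Picard--Vessiot ring for~\eqref{eq:systitere} (same $\ell$ for all $t$), on which the relevant endomorphism is $\sigma^\ell$. The components $u_{i,j}$ of the $\QQ u_i$ sit in $R$, so their images in each factor $R_t$ lie in an integral ring. The point is to choose one factor, say $R_1$, so that the projection $R\to R_1$ is \emph{injective} on $K[\QQ u_1,\dots,\QQ u_r]$; since the $\QQ u_i$ are solutions permuted (up to the $\sigma$-action) cyclically among the factors by $\sigma$, and $K[\QQ u_1,\dots,\QQ u_r]$ is a domain — being a $\sigma$-subalgebra of the $\sigma$-field $K(\QQ u_1,\dots,\QQ u_r)$ — the composite $K[\QQ u_1,\dots,\QQ u_r]\hookrightarrow R\twoheadrightarrow R_1$ cannot have a nonzero kernel without contradicting integrality of the source together with the fact that the primitive idempotents of $R$ are $\sigma^\ell$-fixed and the original $\sigma$ cyclically permutes the $R_t$. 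One then takes the fraction field $\AA L:=\mathrm{Frac}(R_1)$, which is a $\sigma^\ell$-field (the automorphism $\sigma^\ell$ extends to the fraction field of the integral domain $R_1$), and which contains the image of $K(\QQ u_1,\dots,\QQ u_r)$; identifying along the injection gives a $\sigma^\ell$-field containing $K(\QQ u_1,\dots,\QQ u_r)$ and equal to the fraction field of an integral Picard--Vessiot ring of~\eqref{eq:systitere}, as desired.

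I expect the main obstacle to be the bookkeeping around the $\sigma$-action versus the $\sigma^\ell$-action on the product decomposition: one must be careful that $K(\QQ u_1,\dots,\QQ u_r)$, a priori only a $\sigma$-field, does embed into a \emph{single} factor $R_t$ compatibly with $\sigma^\ell$, rather than being spread across several factors. This is handled by the remark that the $\QQ u_i$, being solutions of the \emph{original} system, have all their components already in the integral ring $K[\det(U)^{-1},U]$ produced in the proof of Proposition~\ref{prop:PV} — which is integral because it embeds into the $\sigma$-field $\AA k=K(\QQ u_1,\dots,\QQ u_r)$ — so in fact one may start from that integral ring $R$ itself, apply \cite[Corollary 1.16 et Lemma 1.26]{vdPS=97} to an ambient (possibly non-integral) Picard--Vessiot ring containing it, and observe that an integral $\sigma$-subalgebra necessarily maps isomorphically onto its image in one factor. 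Once this identification is made, everything else is formal: passing to fraction fields preserves the $\sigma^\ell$-structure, and the Picard--Vessiot property of $R_1$ for~\eqref{eq:systitere} is exactly what the cited results assert.
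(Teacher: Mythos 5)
Your first two paragraphs give a correct proof, but by a genuinely different route from the paper's. The paper simply observes that $\QQ u_1,\dots,\QQ u_r$, being solutions of~\eqref{eq:systintro}, are \emph{a fortiori} solutions of the iterated system~\eqref{eq:systitere}, and then applies Proposition~\ref{prop:PV} \emph{directly to~\eqref{eq:systitere}} in the $\sigma^\ell$-category; for the $\ell$ occurring in the cited decomposition \cite[Cor.\ 1.16, Lemma 1.26]{vdPS=97}, together with uniqueness of Picard--Vessiot rings, the resulting Picard--Vessiot ring of~\eqref{eq:systitere} is already integral, and one takes its fraction field. You instead apply Proposition~\ref{prop:PV} to~\eqref{eq:systintro} first, invoke the decomposition to write the resulting ring $R$ as a product $R_1\times\cdots\times R_s$ of integral Picard--Vessiot rings for~\eqref{eq:systitere}, and then project. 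Your injectivity claim is sound and can be stated more plainly than you do: since $K[\QQ u_1,\dots,\QQ u_r]$ is a domain, the kernels of the $s$ composite maps to the factors $R_t$ are ideals with zero intersection, and in a domain a finite intersection of nonzero ideals is nonzero, so one of them must vanish; and $\sigma^\ell$ preserves each factor, so the injection into that factor is $\sigma^\ell$-equivariant. Both proofs rest on the same two ingredients (Proposition~\ref{prop:PV} and the cited decomposition), but the paper's is shorter because it bypasses the projection step entirely.

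One assertion in your third paragraph is wrong, though it is an aside and does not affect the main argument: you claim the ring $K[\det(U)^{-1},U]$ produced in the proof of Proposition~\ref{prop:PV} ``is integral because it embeds into the $\sigma$-field $\AA k=K(\QQ u_1,\dots,\QQ u_r)$.'' The inclusions run the other way: $K[\QQ u_1,\dots,\QQ u_r]\subset K[\det(U)^{-1},U]\subset\AA A$, and the ring $K[\det(U)^{-1},U]$ contains all the entries of a fundamental matrix $U$, which need not lie in $\AA k$. In general it is \emph{not} a domain --- that possible failure of integrality is precisely what the present proposition is designed to repair.
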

\begin{proof}
Il suffit d'observer que les vecteurs $\QQ u_1,\dots,\QQ u_r$, étant solutions de~\eqref{eq:systintro}, sont également solutions de~\eqref{eq:systitere} et on applique la proposition~\ref{prop:PV} à cette situation. Le \pre{\sigma^\ell}anneau obtenu est isomorphe à un anneau de Picard-Vessiot de~\eqref{eq:systitere} qui, pour un choix convenable de $\ell$, est intègre.
\end{proof}

Dans la suite de cette section on fixe une matrice fondamentale de solutions $U=U(z)$ de~\eqref{eq:systintro} et on note $R=K[\det(U)^{-1},U]$. On a
$$R\otimes_{K}K[\det(X)^{-1},X] = R\otimes_{\RQ k}\RQ k[\det(Y)^{-1},Y]$$
où $Y=U^{-1}X$. On note $\AA I_0$ le \pre{\sigma}idéal de $R\otimes_{\RQ k}\RQ k[\det(Y)^{-1},Y]$ engendré par $\AA P\subset K[X]$:
$$\AA I_0 = (P(z,U(z)Y), P\in\AA P) \subset R\otimes_{\RQ k}\RQ k[\det(Y)^{-1},Y]
\enspace,$$
puis $\AA I=\AA I_0\cap\RQ k[\det(Y)^{-1},Y]$. Le \emph{groupe de Galois} $G$ de~\eqref{eq:systintro} sur $K$ est le sous-groupe algébrique de $\RR {Gl}(\RQ k^N)$ défini sur $\RQ k$ par $\AA I$, {\it voir}~\cite[\S1.2]{vdPS=97}. L'image de $U$ par un élément du groupe de Galois est encore une matrice fondamentale de solutions de~\eqref{eq:systintro}. Donc, tout élément de $G$ est représenté par une matrice $y=(y_{i,j})_{1\leq i,j\leq N}$ à coefficients dans $\RQ k$, de déterminant non nul, satisfaisant $P(z,Xy)\in\AA P$ pour tout $P(z,X)\in\AA P$. Il coïncide avec le groupe des automorphismes de la \pre{K}algèbre $K[X]$ laissant invariant $\AA P$ (c'est-à-dire les automorphismes de $K[U]$), qui commutent à l'action de $\sigma$.

Notons $K(U)$ l'anneau total des fractions de l'anneau $R$. La \emph{correspondance de Galois} associe à tout sous-groupe algébrique $H$ du groupe de Galois $G$ l'extension de Picard-Vessiot $K(U)^H\subset K(U)$ sur le corps des éléments de $K(U)$ fixés par tous les éléments de $H$. Réciproquement, à tout \pre{\sigma}anneau intermédiaire $K\subset\AA K\subset K(U)$, dont les éléments non inversibles sont diviseurs de zéro, est associé le sous-groupe de $G$ qui fixe les éléments de $\AA K$. Le groupe $H$ est le groupe de Galois de l'extension de Picard-Vessiot $\AA K=K(U)^H\subset K(U)$. Lorsque le sous-groupe $H$ est normal alors $K\subset K(U)^H$ est aussi une extension de Picard-Vessiot de groupe de Galois $G/H$, \emph{voir}~\cite[Theorem 1.29]{vdPS=97}.

Le schéma $\RR{Spec}(R)\to\RR{Spec}(K)$ est un torseur sous l'action du groupe de Galois $G$, \emph{voir}~\cite[Theorem 1.13]{vdPS=97}. En particulier, $R^G=K$, $R$ n'a pas d'idéal propre, non trivial, invariant par $G$ et, $K$ étant un \pre{C^1}corps, $R=\RQ k[G]\otimes_{\RQ k} K$, {\it voir}~\cite[Corollary 1.15 \& 1.18]{vdPS=97}. Ceci conduit à une description du groupe de Galois comme le plus petit sous-groupe algébrique $G$ de $\RR{Gl}(\RQ k^N)$ défini sur $\RQ k$ tel qu'il existe $B\in\RR{Gl}(K^N)$ satisfaisant $B^{-1}A\sigma(B)\in G(K)$, \emph{voir}~\cite[Proposition 1.21]{vdPS=97}. En particulier, si $A$ appartient à un sous-groupe $H(K)$ de $\RR{Gl}(K^N)$ défini sur $\RQ k$, alors $G\subset H$.

Si $K(U)$ est un corps (\emph{i.e.} $R$ est intègre) il y a correspondance entre les sous-groupes algébriques connexes du groupe de Galois et les corps intermédiaires algébriquement clos dans $K(U)$, \emph{voir}~\cite[Theorem 1.29 et page 22]{vdPS=97}. De plus, la dimension (sur $\RQ k$) du groupe algébrique associé à un \pre{\sigma}corps intermédiaire $K\subset\AA K\subset K(U)$ est égal au degré de transcendance de l'extension de Picard-Vessiot $\AA K\subset K(U)$.

L'action du groupe de Galois sur l'espace des matrices fondamentales du système d'équations fonctionnelles considéré est donné par la multiplication à droite par la matrice $y$ représentant l'élément du groupe de Galois~: $U\mapsto Uy$. Cela induit un endomorphisme de l'espace des solutions $\SS S=\oplus_{i=1}^N\RQ k\QQ u_i$
$$\begin{matrix}
\SS S&\longrightarrow &\SS S\\
U\RQ c=\sum_{i=1}^Nc_i\QQ u_i&\longmapsto &Uy\QQ c=\sum_{i=1}^N\left(\sum_{j=1}^Ny_{i,j}c_j\right)\QQ u_{i}
\end{matrix}\enspace,$$
où $\QQ c$ désigne le vecteur colonne transposé de $(c_1,\dots,c_N)\in\RQ k^N$. Comme $\sigma(Uy\QQ c)=\sigma(U)y\QQ c$, cet endomorphisme commute à $\sigma$ et sa matrice dans la base $(\QQ u_i)_{1\leq i\leq N}$ est $y$. 

\medskip

Notons $G^\circ$ la composante connexe contenant l'origine de $G$, c'est un sous-groupe distingué de $G$. D'après~\cite[Corollary 1.31]{vdPS=97}, le sous-anneau $R^{G^\circ}$ des éléments de $R$ fixés par $G^\circ$ est un \pre{K}espace vectoriel de dimension finie, égale à l'indice $[G:G^\circ]$ de $G^\circ$ dans $G$. Lorsque $R$ est intègre on a le résultat suivant~:
\begin{lem}\label{lem:regulariteetconnexite}
Supposons l'anneau de Picard-Vessiot $R$ intègre, alors le corps $K(U)^{G^\circ}$ est la clôture algébrique de $K$ dans $K(U)$, c'est une extension finie de $K$ de degré $[G:G^\circ]$. En particulier, $K$ est algébriquement clos dans $K(U)$ si et seulement si le groupe de Galois de~\eqref{eq:systintro} est connexe.
\end{lem}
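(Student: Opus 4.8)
Le plan est de raisonner dans le corps $K(U)$ (qui en est bien un puisque $R$ est suppos\'e int\`egre) et de combiner deux des \'enonc\'es rappel\'es ci-dessus : d'une part la correspondance de Galois (en particulier le fait que, $G^\circ$ \'etant distingu\'e dans $G$, l'extension $K\subset K(U)^{G^\circ}$ soit de Picard-Vessiot de groupe de Galois le groupe \emph{fini} $G/G^\circ$), d'autre part l'\'egalit\'e entre la dimension du sous-groupe de $G$ associ\'e \`a un $\sigma$-corps interm\'ediaire $\AA K$ et $\RR{degtr}_{\AA K}K(U)$.

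La premi\`ere \'etape consiste \`a montrer que $K(U)^{G^\circ}$ est exactement la cl\^oture alg\'ebrique relative $L$ de $K$ dans $K(U)$. Comme $\sigma$ induit un automorphisme de $K(U)$ avec $\sigma(K)=K$, le corps $L$ est un $\sigma$-sous-corps de $K(U)$, et la correspondance de Galois lui associe un sous-groupe $H\le G$ tel que $L=K(U)^H$. La formule des dimensions, jointe au fait que $L/K$ soit alg\'ebrique, donne alors
\[
\dim H=\RR{degtr}_{L}K(U)=\RR{degtr}_{K}K(U)=\dim G,
\]
la derni\`ere \'egalit\'e parce que le corps $K$ correspond \`a $G$ tout entier. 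La composante neutre $H^\circ$ est ainsi un sous-groupe ferm\'e connexe de $G^\circ$ de dimension $\dim G^\circ$ ; $G^\circ$ \'etant irr\'eductible, on obtient $H^\circ=G^\circ$, d'o\`u $G^\circ\subset H$ et $K(U)^{G^\circ}\supset K(U)^H=L$. R\'eciproquement, $K\subset K(U)^{G^\circ}$ \'etant une extension de Picard-Vessiot de groupe de Galois fini, elle est de degr\'e de transcendance nul sur $K$, donc alg\'ebrique, donc $K(U)^{G^\circ}\subset L$. Ainsi $K(U)^{G^\circ}=L$, ce qui \'etablit aussi la derni\`ere assertion : $K$ est alg\'ebriquement clos dans $K(U)$ si et seulement si $L=K$, \emph{i.e.} si et seulement si $K(U)^{G^\circ}=K$.

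La seconde \'etape est le calcul du degr\'e. L'extension $K\subset L$ est alg\'ebrique et de type fini (comme sous-extension de l'extension de type fini $K(U)/K$), donc finie, et s\'eparable puisqu'on est en caract\'eristique nulle ; le quotient $G/G^\circ$ op\`ere sur $L$ de corps fixe $K$, d'o\`u $[L:K]\le[G:G^\circ]$. Pour l'in\'egalit\'e inverse, on utilise le fait rappel\'e plus haut que $R^{G^\circ}$ est un $K$-espace vectoriel de dimension $[G:G^\circ]$ (\cite[Corollary 1.31]{vdPS=97}) : \'etant un anneau int\`egre (inclus dans le corps $K(U)$) de dimension finie sur le corps $K$, c'est un corps, contenu dans $K(U)^{G^\circ}=L$, d'o\`u $[L:K]\ge[G:G^\circ]$. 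On conclut $[L:K]=[G:G^\circ]$ (et, accessoirement, $R^{G^\circ}=K(U)^{G^\circ}$). La derni\`ere phrase de l'\'enonc\'e s'ensuit aussit\^ot, $[G:G^\circ]=1$ \'equivalant \`a $G=G^\circ$, c'est-\`a-dire \`a $G$ connexe.

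Le passage d\'elicat est la premi\`ere \'etape : il faut v\'erifier que la cl\^oture alg\'ebrique relative $L$ est admissible pour la correspondance de Galois (c'est un $\sigma$-corps, et ses \'el\'ements non inversibles sont trivialement diviseurs de z\'ero), puis surtout que tout sous-groupe ferm\'e de $G$ de dimension maximale contient $G^\circ$, ce qui repose sur l'irr\'eductibilit\'e d'un groupe alg\'ebrique connexe. On peut du reste remarquer que l'\'ecriture $R\simeq\RQ k[G]\otimes_{\RQ k}K$, avec $\RQ k$ alg\'ebriquement clos, force en fait $G$ \`a \^etre connexe d\`es que $R$ est int\`egre, mais l'argument ci-dessus ne repose pas sur cette observation : le reste n'est qu'une combinaison directe des \'enonc\'es d\'ej\`a \'etablis dans l'extrait.
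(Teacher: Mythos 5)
Your proof is correct, but it follows a noticeably different route from the paper's own (much shorter) argument. The paper invokes directly the two facts stated just before the lemma: the correspondence that sends a connected subgroup to an intermediate field that is \emph{algebraically closed} in $K(U)$ (so $K(U)^{G^\circ}$ is algebraically closed in $K(U)$), and the fact that a Picard-Vessiot extension whose Galois group is the finite group $G/G^\circ$ is algebraic finite of degree $[G:G^\circ]$. Putting these together immediately identifies $K(U)^{G^\circ}$ as the relative algebraic closure and gives its degree in one stroke. You instead start from the relative algebraic closure $L$, apply the general Galois correspondence to produce a subgroup $H$ with $L = K(U)^H$, use the dimension--transcendence-degree formula plus irreducibility of $G^\circ$ to force $G^\circ \subset H$, and then deduce the degree by sandwiching it between an Artin-type upper bound and the lower bound coming from $R^{G^\circ}$ (via \cite[Corollary~1.31]{vdPS=97}, which the paper also quotes but only uses in spirit). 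Both arguments rest on the same corpus of facts from van der Put--Singer, but yours is more hands-on: it avoids leaning on the statement that a finite-group Picard-Vessiot extension has degree equal to the group order, and it avoids invoking the ``connected subgroup $\leftrightarrow$ relatively algebraically closed subfield'' dictionary directly, deriving both from more elementary considerations. The trade-off is length and the need to verify (as you correctly flag) that $L$ is admissible for the Galois correspondence, a point the paper sidesteps entirely by working from the subgroup $G^\circ$ downward rather than from the field $L$ upward.
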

\begin{proof}
Comme on suppose $R$ intègre et que $G^\circ$ est connexe et distingué, $K(U)^{G^\circ}$ est un sous-corps algébriquement clos dans $K(U)$ et $K\subset K(U)^{G^\circ}$ est une extension de Picard-Vessiot de groupe de Galois $G/G^\circ$, donc algébrique finie de degré $[G:G^\circ]$.

Ce qui montre que $K(U)^{G^\circ}$ est la clôture algébrique de $K$ dans $K(U)$ (qui est nécessairement un \pre{\sigma}corps). 

Ainsi, $K(U)^{G^\circ}=K$ si et seulement si $G^\circ=G$.
\end{proof}

\section{Régularité et spécialisation}\label{sec:regspe}
Nous supposons dorénavant $\RQ k=\OO{\RQ Q}$ et nous considérons le système d'équations fonctionnelles~\eqref{eq:systintro}, où $A(z)$ est une matrice $N\times N$ à coefficients dans $\OO{\RQ Q}(z)$. Soit $\rho$ un réel positif, $1\le r\le N$ un entier et $\QQ u_1(z),\dots,\QQ u_r(z)$ des vecteurs solutions de~\eqref{eq:systintro} dont les composantes $u_{i,j}(z)$, $i=1,\dots,N$, $j=1,\dots,r$, appartiennent à $\OO{\RQ Q}[[z]]$ et convergent dans un disque de rayon $\rho$ centré en l'origine de $\RQ C$. Rappelons que les séries $u_{i,j}(z)$ définissent des fonctions méromorphes dans le disque unité sans bord de $\RQ C$, \emph{voir} lemme~\ref{lem:convergence} à la section~\ref{sec:convergence}.

Posons $V=V(z):=(u_{i,j}(z))_{\stackrel{1\le i\le N}{\scriptscriptstyle 1\le j\le r}}$ et rappelons qu'on note $T(z)=T_A(z)$ le polynôme de $\OO{\RQ Q}[z]$ satisfaisant $T(0)=1$ et dont les zéros sont les pôles distincts de $0$, des matrices $A(z)$ et $A(z)^{-1}$, comptés avec multiplicité. Les \emph{singularités} du système~\eqref{eq:systintro} sont les points $\xi\in\RQ C$ satisfaisant $T(\xi^{q^\ell})=0$ pour un $\ell\in\RQ N$. On vérifie qu'un pôle d'une des fonctions $u_{i,j}(z)$ dans le disque unité sans bord de $\RQ C$, est une singularité du système~\eqref{eq:systintro}.

Appliquons le théorème~\ref{thm:nishioka} à des sommes directes du système~\eqref{eq:systintro}.

\begin{thm}\label{thm:indalgbis}
Soit $A(z)$ une matrice carrée $N\times N$ à coefficients dans $\OO{\RQ Q}(z)$. Soit $\QQ u_1(z),\dots,\QQ u_r(z)$ des vecteurs solutions de~\eqref{eq:systintro} dont les composantes $u_{i,j}(z)$, $i=1,\dots,N$, $j=1,\dots,r$, appartiennent à $\OO{\RQ Q}[[z]]$. 

Soit $\alpha\in\OO{\RQ Q}\subset\RQ C$, $0<|\alpha|<\rho$, qui n'est pas une singularité de~\eqref{eq:systintro}, alors
$$\RR{degtr}_{\OO{\RQ Q}}\OO{\RQ Q}(V(\alpha)) \geq \RR{degtr}_{\OO{\RQ Q}(z)}\OO{\RQ Q}(z,V(z))\enspace.$$.
\end{thm}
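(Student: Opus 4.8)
The idea is to reduce to Nishioka's Theorem \ref{thm:nishioka} applied not to the original system \eqref{eq:systintro} but to a suitable direct sum of copies of it. First I would form the vector $\QQ F(z)$ obtained by stacking the $r$ columns $\QQ u_1(z),\dots,\QQ u_r(z)$ on top of each other; it has $rN$ components, all lying in $\OO{\RQ Q}[[z]]$ and converging in the disc of radius $\rho$, and it satisfies the functional equation
$$\QQ F(z) = \big(I_r\otimes A(z)\big)\QQ F(z^q)\enspace,$$
i.e. the system \eqref{eq:systintro} with matrix $A^{\oplus r}(z):=\mathrm{diag}(A(z),\dots,A(z))$, which is $rN\times rN$, invertible, with coefficients in $\OO{\RQ Q}(z)$. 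The key elementary observation is that the poles of $A^{\oplus r}(z)$ and of its inverse are exactly the poles of $A(z)$ and $A(z)^{-1}$, so $T_{A^{\oplus r}}(z)=T_A(z)$ up to multiplicity bookkeeping, and in any case the singular set of the new system coincides with that of \eqref{eq:systintro}. Hence the hypothesis that $\alpha$ is not a singularity of \eqref{eq:systintro}, together with $0<|\alpha|<\rho$, is precisely what is needed to apply Theorem \ref{thm:nishioka} to $\QQ F$ at $\alpha$: no power $\alpha^{q^\ell}$ is a pole of a coefficient of $(A^{\oplus r})^{-1}=A^{-1}\oplus\cdots\oplus A^{-1}$.

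Applying Theorem \ref{thm:nishioka} to $\QQ F$ then yields
$$\RR{degtr}_{\OO{\RQ Q}}\OO{\RQ Q}\big(\QQ F(\alpha)\big)\ \geq\ \RR{degtr}_{\OO{\RQ Q}(z)}\OO{\RQ Q}\big(z,\QQ F(z)\big)\enspace.$$
Now $\OO{\RQ Q}(\QQ F(\alpha))=\OO{\RQ Q}(V(\alpha))$ since the components of $\QQ F(\alpha)$ are exactly the entries $u_{i,j}(\alpha)$, and likewise $\OO{\RQ Q}(z,\QQ F(z))=\OO{\RQ Q}(z,V(z))$. So the left-hand side is $\RR{degtr}_{\OO{\RQ Q}}\OO{\RQ Q}(V(\alpha))$ and the right-hand side is $\RR{degtr}_{\OO{\RQ Q}(z)}\OO{\RQ Q}(z,V(z))$, which is exactly the claimed inequality. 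I should take care to note why $\QQ F$ satisfies the hypotheses of Theorem \ref{thm:nishioka} verbatim: the components converge in a disc of radius $\rho$ about the origin (they are a sub-collection of the $u_{i,j}$), and $|\alpha|<\rho$.

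There is no serious obstacle here; the statement is essentially a bookkeeping reformulation of Nishioka's theorem once one notices that it may be applied to $A^{\oplus r}$ rather than to $A$. The only point requiring a line of care is the identification of the singular set of the direct-sum system with that of \eqref{eq:systintro} — that is, checking that passing from $A$ to $A^{\oplus r}$ creates no new poles of the matrix or its inverse — but this is immediate from the block-diagonal form, since the entries of $A^{\oplus r}(z)$ and of $A^{\oplus r}(z)^{-1}=(A(z)^{-1})^{\oplus r}$ are, up to zeros, exactly the entries of $A(z)$ and $A(z)^{-1}$. One should also remark that $\det(A^{\oplus r}(z))=\det(A(z))^r$ is not identically zero, so $A^{\oplus r}$ is indeed invertible over $\OO{\RQ Q}(z)$, as required.
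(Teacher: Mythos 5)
Your proof is essentially the same as the paper's: apply Theorem~\ref{thm:nishioka} to the $rN\times rN$ block-diagonal system $B(z)=A(z)^{\oplus r}$ satisfied by the stacked vector of the $u_{i,j}$, and observe that the singular set is unchanged. The one small point you gloss over is that Theorem~\ref{thm:nishioka} requires the series to have coefficients in a single number field, whereas your statement only assumes they lie in $\OO{\RQ Q}[[z]]$; the paper dispatches this by noting that the functional equation forces the coefficients of the $u_{i,j}$ to lie in a finite extension of $\RQ Q$ (generated by the coefficients of the coefficients of $A$, and finitely many initial coefficients), so the hypothesis is in fact met.
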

\begin{proof}
On applique le théorème~\ref{thm:nishioka} au système $rN\times rN$ de matrice $B(z)$ composée de $r$ blocs diagonaux $A(z)$, dont le vecteur colonne de composantes les $u_{i,j}(z)$ est solution. Par hypothèses les séries $u_{i,j}(z)$ convergent dans le disque de rayon $\rho$ centré en l'origine de $\RQ C$ et leurs coefficients appartiennent à l'extension finie de $\RQ Q$ engendrée par les coefficients des coefficients de $A(z)$. De plus, les pôles des coefficients de la matrice $B^{-1}(z)$ sont les mêmes que ceux de $A^{-1}(z)$ et les hypothèses du théorème~\ref{thm:nishioka} se ramènent donc à celles du théorème~\ref{thm:indalgbis}.
\end{proof}

Lorsqu'un corps est algébriquement clos dans un plus grand corps, on dit que l'extension est \emph{régulière}. Le lemme~\ref{lem:extreg} suivant peut donc se reformuler~: $K\subset K(V)$ et $\OO{\RQ Q}(z)\subset\OO{\RQ Q}(z,V)$ sont des extensions régulières.

\begin{lem}\label{lem:extreg}
Tout élément de $K(V)$ ({\it resp.} $\OO{\RQ Q}(z,V)$) algébrique sur $K$ ({\it resp.} $\OO{\RQ Q}(z)$) appartient à $K$ ({\it resp.} $\OO{\RQ Q}(z)$).
\end{lem}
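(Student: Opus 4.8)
The plan is to reduce the statement to the classical fact that a power series which is algebraic over $\OO{\RQ Q}(z)$ and satisfies a Mahler equation with polynomial coefficients is rational (\cite[Lemma 5]{Bk=94}, \emph{voir aussi} \cite[Theorem 5.1.2]{Ni=96}); this rationality statement is the only substantial input, the rest being bookkeeping with the $\sigma$-action. First I would observe that it suffices to treat the extension $\OO{\RQ Q}(z)\subset\OO{\RQ Q}(z,V)$. Indeed, if $h\in K(V)$ is algebraic over $K$, then, its minimal polynomial having finitely many coefficients each lying in some $\OO{\RQ Q}(z^{q^{-\ell}})$, one has $h\in\OO{\RQ Q}(z^{q^{-c}},V)$ with $h$ algebraic over $\OO{\RQ Q}(z^{q^{-c}})$ for a suitable $c\in\RQ N$; setting $w=z^{q^{-c}}$ and $\QQ g_i(w)=\QQ u_i(w^{q^c})$ one checks $\QQ g_i(w)=A(w^{q^c})\QQ g_i(w^q)$ with $A(w^{q^c})\in\RR{Gl}_N(\OO{\RQ Q}(w))$ and the components of $\QQ g_i$ in $\OO{\RQ Q}[[w]]$, while $\OO{\RQ Q}(z^{q^{-c}},V)=\OO{\RQ Q}(w,\QQ g_1,\dots,\QQ g_r)$, so the case of $\OO{\RQ Q}(z)$ applied with $w,\QQ g_i$ in the roles of $z,\QQ u_i$ yields $h\in\OO{\RQ Q}(w)\subset K$.

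So let $h\in\OO{\RQ Q}(z,V)$ be algebraic over $\OO{\RQ Q}(z)$. Since the $u_{i,j}$ lie in $\OO{\RQ Q}[[z]]$ one has $\OO{\RQ Q}(z,V)\subset\OO{\RQ Q}((z))$, and multiplying $h$ by a suitable power of $z$ I may assume $h\in\OO{\RQ Q}[[z]]$. The field $\OO{\RQ Q}(z,V)$ is stable under $\sigma$ because $\QQ u_i(z^q)=A(z)^{-1}\QQ u_i(z)$; hence $h_n:=\sigma^n(h)\in\OO{\RQ Q}(z,V)$ for all $n\ge 0$, and each $h_n$ is algebraic over $\OO{\RQ Q}(z)$ of degree at most $\deg_X P$, where $P\in\OO{\RQ Q}[z,X]$ is such that $P(z,h)=0$ (since $P(z^{q^n},h_n)=0$). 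Consequently $L:=\OO{\RQ Q}(z)(h_0,h_1,\dots)$ is an algebraic extension of $\OO{\RQ Q}(z)$ contained in the finitely generated field $\OO{\RQ Q}(z,V)$, so $D:=[L:\OO{\RQ Q}(z)]$ is finite.

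Now the $D+1$ elements $h_0,\dots,h_D$ of the $D$-dimensional $\OO{\RQ Q}(z)$-vector space $L$ satisfy a nontrivial linear relation $\sum_{n=0}^D c_n(z)h_n=0$, $c_n\in\OO{\RQ Q}(z)$; let $a\le b$ be the least and greatest indices with $c_a,c_b\ne 0$. Reading this relation in $K(V)$, on which $\sigma$ is an automorphism ($\sigma^{-1}(\QQ u_i)=A(z^{1/q})\QQ u_i$), and applying $\sigma^{-a}$, the terms of index $<a$ vanish; after the substitution $w=z^{q^{-a}}$ and clearing of denominators, $h$ — regarded as a power series in $w$ via $\OO{\RQ Q}[[z]]=\OO{\RQ Q}[[w^{q^a}]]\subset\OO{\RQ Q}[[w]]$ — satisfies a Mahler equation
$$\sum_{m=0}^{b-a}Q_m(w)\,h(w^{q^m})=0\enspace,\qquad Q_m\in\OO{\RQ Q}[w]\enspace,\quad Q_0Q_{b-a}\ne 0\enspace.$$
Since $h$ is algebraic over $\OO{\RQ Q}(z)\subset\OO{\RQ Q}(w)$, the rationality of algebraic Mahler functions (\cite[Lemma 5]{Bk=94}, \cite[Theorem 5.1.2]{Ni=96}) gives $h\in\OO{\RQ Q}(w)$. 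But $h$ is also a power series in $w^{q^a}$, hence invariant under the automorphisms $w\mapsto\zeta w$ of $\OO{\RQ Q}(w)$ with $\zeta^{q^a}=1$; therefore $h\in\OO{\RQ Q}(w)^{\mu_{q^a}}=\OO{\RQ Q}(w^{q^a})=\OO{\RQ Q}(z)$, which proves the case of $\OO{\RQ Q}(z)$ and hence the lemma.

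The substantial obstacle is the cited rationality statement for algebraic Mahler functions; once it is granted the rest is routine, but the final descent from $\OO{\RQ Q}(z^{q^{-a}})$ back to $\OO{\RQ Q}(z)$ genuinely relies on the hypothesis $u_{i,j}\in\OO{\RQ Q}[[z]]$ (so that the elements of $\OO{\RQ Q}(z,V)$ are honest Laurent series in $z$): without it, for instance with $\sqrt z$ among the generators, the conclusion would fail.
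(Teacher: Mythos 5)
Your proof is correct in its overall structure but follows a genuinely different path at the two key points. For the finite-dimensionality, you observe directly that an algebraic intermediate field of the finitely generated extension $\OO{\RQ Q}(z)\subset\OO{\RQ Q}(z,V)$ is automatically finite over $\OO{\RQ Q}(z)$; the paper instead deduces this via Picard--Vessiot theory (Proposition~\ref{prop:PVitere} and Lemma~\ref{lem:regulariteetconnexite}). Your route is more elementary and dispenses entirely with the Galois-theoretic machinery at this step, which is a genuine simplification. You also reduce the $K$-case to the $\OO{\RQ Q}(z)$-case by the substitution $w=z^{q^{-c}}$, which the paper does not do --- its argument runs over $K$ throughout. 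The point to scrutinize is the invocation of \cite[Lemma~5]{Bk=94} and \cite[Theorem~5.1.2]{Ni=96}. The statement you actually need is: a power series $g\in\OO{\RQ Q}[[w]]$, algebraic over $\OO{\RQ Q}(w)$ and satisfying a linear Mahler relation $\sum_{m}Q_m(w)\,g(w^{q^m})=0$ with $Q_m\in\OO{\RQ Q}[w]$ and $Q_0\ne 0$, is rational. Becker's Lemma~5 is formulated for $q$-regular power series, and your $g(w)=h(w^{q^a})$, being a ratio of polynomials in the entries of $V$, is not visibly $q$-regular; and Theorem~5.1.2 of Nishioka is a linear-independence statement for values, not a rationality statement for functions. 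The fact you need is true, and it is precisely what the paper establishes with its analytic argument: from the Mahler-type relation it continues $f$ meromorphically across the unit circle via a chain of sectors $E(\varepsilon)^{q^{-h}}$ avoiding the ramification points, observes that $\sigma^\ell(f)$ is unramified at $0$, and concludes by Riemann--Hurwitz. So either this analytic step must be supplied, or the citation must be replaced by one that genuinely covers general Mahler series; as written you are implicitly deferring to the paper's own proof at the most substantial step, and should make that dependence explicit rather than bury it in a citation that does not cover your case.
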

\begin{proof}
Grâce à la proposition~\ref{prop:PVitere}, le corps $K(V)$ est contenu dans une extension de Picard-Vessiot de $K$ pour le système d'équations fonctionnelles~\eqref{eq:systitere}. Soit $L$ la clôture algébrique de $K$ dans $K(V)$, elle est contenue dans la clôture algébrique de $K$ dans l'extension de Picard-Vessiot exhibée qui, d'après le lemme~\ref{lem:regulariteetconnexite}, est une extension finie de $K$. L'extension $K\subset L$ est donc elle aussi finie. Soit $f\in L$, alors $\sigma^\ell(f)$ est encore un élément de $K(V)$ algébrique sur $K$, pour tout $\ell\in\RQ N$. Comme $L$ est un \pre{K}espace vectoriel de dimension finie, il existe donc, pour un certain $m\in\RQ N^\times$, une relation de la forme
\begin{equation}\label{eq:fonctionnellef}
\sigma^m(f)=\sum_{\ell=0}^{m-1}b_\ell\sigma^\ell(f)
\end{equation}
avec $b_\ell\in K$ pour $\ell=0,\dots,m-1$. Notons $\SS R$ l'ensemble (fini) des points de ramification de $f$ et des $b_\ell$, on note que ces derniers $b_\ell$ sont méromorphes dans $\RQ C^\times$ et n'ont de ramification qu'en $0$ et $\infty$. Soit $\theta$ l'argument minimal dans $]0,2\pi]$ d'un élément de $\SS R\cap\{z\in\RQ C;|z|=1\}$ (en posant $\theta=2\pi$ si ce dernier ensemble est vide). Pour tout réel $0<\varepsilon<1$ notons encore
$$E(\varepsilon) := \left\{z\in\RQ C;\theta-\varepsilon<\RR{Arg}(z)<\theta,1-\varepsilon<|z|<1+\varepsilon\right\}
\enspace.$$
Lorsque $\varepsilon$ est assez petit l'ensemble $\bigcup_{h=0}^{\infty} E(\varepsilon)^{q^{-h}}$ ne rencontre pas $\SS R$, grâce au choix de $\theta$. La fonction algébrique $f$ étant méromorphe en dehors de $E$, l'est en particulier dans $\bigcup_{h=0}^{\infty} E(\varepsilon)^{q^{-h}}$.

Maintenant, si $f$ est une fonction méromorphe dans un ensemble de la forme $\bigcup_{h=0}^{\infty} E_0^{q^{-h}}$ avec $E_0\subset\RQ C^\times$, alors pour chaque $\ell=0,\dots,m-1$ la fonction $\sigma^\ell(f)$ est méro\-mor\-phe dans $\bigcup_{h=\ell}^{\infty} E_0^{q^{-h}}$. Et donc, par la relation~\eqref{eq:fonctionnellef}, la fonction $\sigma^m(f)$ est méromorphe dans $\bigcup_{h=m-1}^{\infty} E_0^{q^{-h}}$, ce qui implique finalement la méromorphie de $f$ dans $\bigcup_{h=-1}^{\infty} E_0^{q^{-h}}$. Appliquant ce raisonnement à $E_0=E(\varepsilon)$, puis itérant avec $E_0=E(\varepsilon)^q, E(\varepsilon)^{q^2}, \dots$ successivement, on montre que la fonction $f$ (et $\sigma^\ell(f)$ pour tout $\ell\in\RQ N$) est méromorphe dans $\bigcup_{h\in\RQ Z} E(\varepsilon)^{q^{-h}}=\RQ C^\times$.

Mais, $f\in L\subset K(V)$ appartient à un corps $\OO{\RQ Q}(z^{q^{-\ell}})(V)\subset\OO{\RQ Q}(z^{q^{-\ell}})[[z]]$ pour un certain $\ell\in\RQ N$, ce qui entraîne que $\sigma^\ell(f)$ n'est pas ramifiée en $0$. Avec ce qui précède, elle est donc méromorphe dans $\RQ C$ et la formule de Riemann-Hurwitz impose qu'elle soit rationnelle. Ainsi $\sigma^\ell(f)\in\OO{\RQ Q}(z)$, d'où suit $f\in K$.

Si on suppose $f\in L\cap\OO{\RQ Q}(z,V)\subset\OO{\RQ Q}[[z]]$ alors le raisonnement ci-dessus montre que la fonction $f$ elle-même est méromorphe dans $\RQ C$ et donc appartient à $\OO{\RQ Q}(z)$.
\end{proof}

\begin{undef}{Remarque}
La démonstration ci-dessus est à rapprocher de celle du lem\-me~5 de~\cite{Bk=94}. On y ajoute comme ingrédient supplémentaire, en s'appuyant sur la théorie de Galois, que la clôture algébrique de $K$ dans $K(V)$ est une extension finie de $K$.
\end{undef}

Rappelons la matrice de variables $X:=(X_{i,j})_{\stackrel{1\le i\le N}{\scriptscriptstyle 1\le j\le r}}$ et l'automorphisme $\tilde\sigma$ introduit à la fin de la section~\ref{sec:corpsendo}. On note aussi $\AA P$ l'idéal premier de $K[X]$ des relations algébriques entre les coefficients de la matrice $V$. Adjoignons une nouvelle variable $X_0$ à $X$ et notons $\tilde X=(X,X_0)$. Soit $\tilde{\AA P}$ l'idéal de $K[\tilde X]$ homogénéisé en $X$ de l'idéal $\AA P$, c'est un idéal premier de même rang que $\AA P\subset K[X]$.

\begin{prop}\label{prop:indalg}
Il existe un réel $0<\rho'<\rho$ tel que tout $\alpha\in\OO{\RQ Q}\subset\RQ C$, $0<|\alpha|\leq\rho'$, ait la propriété suivante~: pour tout polynôme $P\in\OO{\RQ Q}[X]$ tel que $P(V(\alpha))=0$, il existe $Q\in\OO{\RQ Q}[z,X]$ de même degré en $X$ que $P$, satisfaisant $Q(z,V(z))=0$ et $P(X)=Q(\alpha,X)$.
\end{prop}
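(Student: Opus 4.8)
Le plan est de passer par l'anneau de Picard-Vessiot et la théorie de Galois développés aux sections~\ref{sec:corpsendo} et~\ref{sec:Galois}, avec $\RQ k=\OO{\RQ Q}$, puis de combiner la dimension de l'orbite galoisienne avec le théorème~\ref{thm:indalgbis} appliqué à des sommes directes du système. Concrètement, je plongerais d'abord, par la proposition~\ref{prop:PVitere}, le corps $K(V)$ dans (le corps des fractions d')un anneau de Picard-Vessiot intègre~$R$ du système itéré~\eqref{eq:systitere}, de groupe de Galois~$G$ connexe~; quitte à remplacer $q$ par une puissance, on peut donc supposer $G$ connexe, ce qui par le lemme~\ref{lem:regulariteetconnexite} équivaut au fait que $K$ soit algébriquement clos dans $K(V)$, c'est-à-dire exactement la conclusion du lemme~\ref{lem:extreg}. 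L'idée-clé est que la variété des relations algébriques sur $\OO{\RQ Q}$ entre les valeurs $V(\alpha)$ ne peut pas être plus petite que celle des relations fonctionnelles, précisément à cause du minorant de degré de transcendance du théorème~\ref{thm:indalgbis}.

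Voici les étapes dans l'ordre. Primo, soit $\AA P\subset K[X]$ l'idéal premier des relations fonctionnelles et $d=\RR{degtr}_{\OO{\RQ Q}(z)}\OO{\RQ Q}(z,V(z))$ sa codimension~; soit $W=V(\RQ k[X])\subset\RR{Spec}(K[X])$ la $K$-variété correspondante. Secundo, je considère un polynôme générateur de $\AA P$, ou mieux un système fini $Q_1,\dots,Q_s\in\OO{\RQ Q}(z)[X]$ engendrant $\AA P$ après extension des scalaires~; comme $\OO{\RQ Q}(z)\subset\OO{\RQ Q}(z,V)$ est régulière (lemme~\ref{lem:extreg}), la $\OO{\RQ Q}(z)$-variété $W_0$ définie par $\AA P\cap\OO{\RQ Q}(z)[X]$ est géométriquement intègre, de dimension $rN-d$. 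Tertio, pour $\alpha$ assez petit non singulier, la spécialisation $z\mapsto\alpha$ est définie sur les $Q_i$ (en chassant les dénominateurs), et $V(\alpha)$ appartient à la fibre spécialisée $W_0(\alpha)$~; il suffit donc de montrer que l'adhérence de Zariski de $V(\alpha)$ sur $\OO{\RQ Q}$ est \emph{exactement} une composante géométriquement intègre de $W_0(\alpha)$, de dimension $rN-d$. Quarto, l'inégalité $\dim\OO{\RQ Q}$-adhérence $\le rN-d$ vient de ce que $V(\alpha)\in W_0(\alpha)$ et que la fibre d'un morphisme dominant entre variétés intègres a la dimension attendue pour $\alpha$ hors d'un fermé strict~; l'inégalité inverse $\ge rN-d=\RR{degtr}_{\OO{\RQ Q}(z)}\OO{\RQ Q}(z,V(z))$ est précisément le théorème~\ref{thm:indalgbis}, valable dès que $0<|\alpha|<\rho$ et $\alpha$ non singulier. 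Ainsi l'adhérence de $V(\alpha)$ est de dimension $rN-d$ et contenue dans $W_0(\alpha)$~; reste à voir qu'elle \emph{coïncide} avec une composante de $W_0(\alpha)$, ce qui donne que tout $P\in\OO{\RQ Q}[X]$ annulant $V(\alpha)$ provient, par la description de l'idéal de cette composante via les $Q_i(\alpha,X)$, d'un $Q\in\OO{\RQ Q}[z,X]$ avec $Q(z,V(z))=0$ et $Q(\alpha,X)=P(X)$~; le contrôle du degré en $X$ s'obtient en homogénéisant via $\tilde{\AA P}\subset K[\tilde X]$ et en travaillant composante homogène par composante homogène.

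Le point délicat — et c'est là que le $\rho'<\rho$ de l'énoncé intervient — est de garantir que l'adhérence de $V(\alpha)$ sur $\OO{\RQ Q}$ est bien \emph{irréductible et réduite de la bonne dimension égale à une composante entière} de la fibre spécialisée, et non pas seulement contenue dedans avec la bonne dimension. Pour cela je bornerais uniformément les dénominateurs et les hauteurs~: les coefficients des $Q_i$ vivent dans $\OO{\RQ Q}(z)$, donc après multiplication par un polynôme commun $D(z)$ on a des $\tilde Q_i\in\OO{\RQ Q}[z,X]$~; le mauvais lieu où la fibre saute de dimension ou se décompose anormalement est contenu dans le lieu des zéros d'un polynôme non nul $\Delta(z)\in\OO{\RQ Q}[z]$ (resultant / discriminant des $\tilde Q_i$, ou jacobien générique), et il suffit de prendre $\rho'$ strictement plus petit que le plus petit module d'une racine non nulle de $D\cdot\Delta$ et que~$\rho$. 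Pour un tel $\alpha$, $W_0(\alpha)$ est géométriquement intègre de dimension $rN-d$, et la minoration du théorème~\ref{thm:indalgbis} force l'adhérence de $V(\alpha)$ à être $W_0(\alpha)$ tout entière, d'où la conclusion avec le bon contrôle de degré. Il est à noter que cette restriction $|\alpha|\le\rho'$ sera levée dans le théorème~\ref{thm:indalg} par un argument de spécialisation plus fin (remplacement de $\alpha$ par $\alpha^{q^\ell}$ et propagation), mais la proposition ici se contente de l'énoncé local.
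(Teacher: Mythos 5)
Votre proposition suit essentiellement la même démarche que le papier : régularité de l'extension $\OO{\RQ Q}(z)\subset\OO{\RQ Q}(z,V)$ (lemme~\ref{lem:extreg}) assurant que $\AA P\cap\OO{\RQ Q}(z)[X]$ est absolument premier, spécialisation de $z$ en $\alpha$ hors d'un ensemble fini exceptionnel (le papier invoque~\cite{Kr=48} là où vous invoquez la constructibilité du lieu où la fibre dégénère, ce qui revient au même), puis confrontation de l'inclusion $\AA s_\alpha(\tilde{\AA P}\cap\OO{\RQ Q}[z,\tilde X])\subset\tilde{\AA P}_\alpha$ avec la minoration de degré de transcendance du théorème~\ref{thm:indalgbis} pour forcer l'égalité, le contrôle du degré en $X$ passant dans les deux cas par l'idéal homogénéisé $\tilde{\AA P}$. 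La seule différence est votre premier paragraphe sur la théorie de Picard-Vessiot et la connexité du groupe de Galois, qui est superflu ici : le lemme~\ref{lem:extreg} donne directement la régularité, sans qu'il soit besoin de remplacer $q$ par une puissance ni de supposer $G$ connexe dans cette démonstration.
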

\begin{proof}
Soit $\AA P_\alpha$ l'idéal premier de $\OO{\RQ Q}[X]$ des relations de dépendance algébrique (à coefficients algébriques) entre les coefficients de $V(\alpha)$ et $\tilde{\AA P}_\alpha$ son homogénéisé dans $\OO{\RQ Q}[\tilde X]$. Nous notons $\AA s_\alpha:\OO{\RQ Q}[z]\to\OO{\RQ Q}$ la spécialisation de $z$ en $\alpha$, nous allons montrer $\AA s_\alpha(\tilde{\AA P}\cap \OO{\RQ Q}[z,\tilde X])=\tilde{\AA P}_\alpha$ pour l'application étendue $\AA s_\alpha:\OO{\RQ Q}[z,\tilde X]\to\OO{\RQ Q}[\tilde X]$. Notons qu'on a clairement $\AA s_\alpha(\tilde{\AA P}\cap \OO{\RQ Q}[z,\tilde X])\subset\tilde{\AA P}_\alpha$ et que les idéaux premiers $\AA P_\alpha$ et $\tilde{\AA P}_\alpha$ ont même rang.

Comme $\RR{Frac}(\OO{\RQ Q}[z,X]/\AA P\cap \OO{\RQ Q}[z,X]) \simeq \OO{\RQ Q}(z,V)$ est une extension régulière de $\OO{\RQ Q}(z)$ d'après le lemme~\ref{lem:extreg}, l'idéal $\AA P\cap \OO{\RQ Q}(z)[X]$ est absolument premier, voir~\cite[Theorem 39, p.230]{ZS=60}. L'idéal $\tilde{\AA P}\cap\OO{\RQ Q}(z)[\tilde X]$ est l'homogénéisé en $X$ de $\AA P\cap \OO{\RQ Q}(z)[X]$, il est donc également absolument premier de même rang. Et, d'après~\cite[\S3, Satz 16]{Kr=48}, son image par $\AA s_\alpha$ est un idéal premier de $\OO{\RQ Q}[\tilde X]$ de même rang que $\tilde{\AA P}\cap\OO{\RQ Q}(z)[\tilde X]$, pour tout $\alpha$ en dehors d'un ensemble fini. On choisit $0<\rho'<\rho$ strictement plus petit que le plus petit des modules des éléments non nuls de cet ensemble auquel on adjoint les zéros de $T_A(z)$. On suppose dorénavant $0<|\alpha|\leq\rho'$, de sorte que $\alpha$ n'est pas une singularité de~\eqref{eq:systintro} et $\AA s_\alpha(\tilde{\AA P}\cap\OO{\RQ Q}[z,\tilde X])$ est un idéal premier de même rang que $\tilde{\AA P}\cap\OO{\RQ Q}(z)[\tilde X]$ et donc que ${\AA P}\cap\OO{\RQ Q}(z)[X]$.

L'anneau $\OO{\RQ Q}(z)[V]$ est de type fini sur $\OO{\RQ Q}(z)$ et intègre, aussi sa dimension de Krull est-elle égale au degré de transcendance sur $\OO{\RQ Q}(z)$ du corps $\OO{\RQ Q}(z,V)$, voir~\cite[\S8.2.1, Theorem A, pp.221]{Ei=95}. De plus, comme l'idéal $\AA s_\alpha(\tilde{\AA P}\cap \OO{\RQ Q}[z,\tilde X])$ a même rang que $\AA P\cap\OO{\RQ Q}(z)[X]$, on a
$$\begin{aligned}
\RR{degtr}_{\OO{\RQ Q}(z)}\OO{\RQ Q}(z,V) &= \dim\left(\OO{\RQ Q}(z)[V]\right) = \dim\left(\OO{\RQ Q}(z)[X]/\AA P\cap \OO{\RQ Q}(z)[X]\right)\\
&= \dim(\OO{\RQ Q}(z)[X])-\RR{rg}(\AA P\cap\OO{\RQ Q}(z)[X])\\
&= \dim(\OO{\RQ Q}[\tilde X])-\RR{rg}(\AA s_\alpha(\tilde{\AA P}\cap \OO{\RQ Q}[z,\tilde X]))-1
\enspace.\end{aligned}$$

De même, l'anneau $\OO{\RQ Q}[X]/\AA P_\alpha$ étant de type fini sur $\OO{\RQ Q}$ et intègre, sa dimension de Krull est égale au degré de transcendance sur $\OO{\RQ Q}$ du corps $\OO{\RQ Q}(V(\alpha))$, voir~{\it ibidem}, on a
$$\RR{degtr}_{\OO{\RQ Q}}\OO{\RQ Q}(V(\alpha)) = \dim(\OO{\RQ Q}[X])-\RR{rg}(\AA P_{\alpha}) = \dim(\OO{\RQ Q}[\tilde X])-\RR{rg}(\tilde{\AA P}_\alpha)-1
\enspace.$$
Le théorème~\ref{thm:indalgbis} entraîne donc
$$\RR{rg}(\AA s_\alpha(\tilde{\AA P}\cap \OO{\RQ Q}(z)[\tilde X])) \ge \RR{rg}(\tilde{\AA P}_\alpha)$$
et, comme $\AA s_\alpha(\tilde{\AA P}\cap \OO{\RQ Q}[z,\tilde X])\subset \tilde{\AA P}_\alpha$, on en déduit que les idéaux premiers $\AA s_\alpha(\tilde{\AA P}\cap \OO{\RQ Q}[z,\tilde X])$ et $\tilde{\AA P}_\alpha$ sont égaux, comme voulu.

Ainsi, le polynôme $\tilde P\in\tilde{\AA P}_\alpha\subset\OO{\RQ Q}[\tilde X]$, homogénéisé de $P\in\OO{\RQ Q}[X]$, s'écrit $\AA s_\alpha(\tilde Q)$ pour un polynôme $\tilde Q\in\tilde{\AA P}\cap\OO{\RQ Q}[z,\tilde X]$, de même degré en $\tilde X$ que $\tilde P$. Le polynôme $Q\in\AA P\cap\OO{\RQ Q}[z,X]$ obtenu de $\tilde Q$ par spécialisation de $X_0$ en $1$ satisfait la conclusion de la proposition.
\end{proof}

Le corollaire suivant établit le théorème~\ref{thm:indalg}.

\begin{cor}\label{cor:indalgi}
Soit $\alpha\in\OO{\RQ Q}\subset\RQ C$, $0<|\alpha|<1$, n'appartenant pas à l'ensemble singulier de~\eqref{eq:systintro}, alors pour tout $P\in\OO{\RQ Q}[X]$ tel que $P(V(\alpha))=0$, il existe $Q\in\OO{\RQ Q}[z,X]$ de même degré en $X$ que $P$, tel que $Q(z,V(z))=0$ et $P(X)=Q(\alpha,X)$.
\end{cor}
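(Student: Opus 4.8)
The plan is to deduce this from Proposition~\ref{prop:indalg} by using the functional equation~\eqref{eq:systintro} to drag the argument $\alpha$ into the range $0<|\cdot|\le\rho'$ covered by that proposition, at the cost of an explicit invertible linear twist of the variables $X$. Fix $\alpha\in\OO{\RQ Q}$ with $0<|\alpha|<1$ lying outside the singular set of~\eqref{eq:systintro}, and let $\rho'$ be the real number produced by Proposition~\ref{prop:indalg}. Since $|\alpha|^{q^\ell}\to 0$ as $\ell\to\infty$, I would first choose $\ell\in\RQ N$ with $|\alpha|^{q^\ell}\le\rho'$ and set $\beta:=\alpha^{q^\ell}$; then $0<|\beta|\le\rho'$, and $\beta$ is again outside the singular set because $T(\beta^{q^h})=T(\alpha^{q^{\ell+h}})\ne 0$ for every $h\in\RQ N$. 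Iterating~\eqref{eq:systintro} gives $V(z)=B_\ell(z)\,V(z^{q^\ell})$, where $B_\ell(z):=A(z)A(z^q)\cdots A(z^{q^{\ell-1}})$ is an invertible $N\times N$ matrix over $\OO{\RQ Q}(z)$ whose entries, together with those of $B_\ell(z)^{-1}$, have poles only among the zeros of the polynomials $T(z^{q^j})$, $0\le j<\ell$, hence only at singularities of~\eqref{eq:systintro}. In particular $B_\ell(\alpha)$ is a well-defined invertible matrix with entries in $\OO{\RQ Q}$, and $V(\alpha)=B_\ell(\alpha)V(\beta)$.

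Next, given $P\in\OO{\RQ Q}[X]$ with $P(V(\alpha))=0$, I would set $P'(X):=P(B_\ell(\alpha)X)$, the polynomial obtained from $P$ by the invertible $\OO{\RQ Q}$-linear substitution $X\mapsto B_\ell(\alpha)X$ of the entries of $X$; thus $\deg_X P'=\deg_X P$ and $P'(V(\beta))=P(B_\ell(\alpha)V(\beta))=P(V(\alpha))=0$. Applying Proposition~\ref{prop:indalg} to the point $\beta$ and the polynomial $P'$ furnishes $Q'\in\OO{\RQ Q}[z,X]$ with $\deg_X Q'=\deg_X P'=\deg_X P$, satisfying $Q'(z,V(z))=0$ and $P'(X)=Q'(\beta,X)$.

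It remains to transport $Q'$ back to $\alpha$. Substituting $z\mapsto z^{q^\ell}$ in the identity $Q'(z,V(z))=0$ and using $V(z^{q^\ell})=B_\ell(z)^{-1}V(z)$ gives $Q'\bigl(z^{q^\ell},B_\ell(z)^{-1}V(z)\bigr)=0$. The expression $G(z,X):=Q'\bigl(z^{q^\ell},B_\ell(z)^{-1}X\bigr)$ is a polynomial in $X$ of the same $X$-degree as $Q'$, whose coefficients are rational functions of $z$ with poles only among the singularities of~\eqref{eq:systintro} (they come only from $B_\ell(z)^{-1}$); choosing $D\in\OO{\RQ Q}[z]$ vanishing only at such singularities so that $D\cdot G\in\OO{\RQ Q}[z,X]$ — whence $D(\alpha)\ne 0$ — and setting $Q(z,X):=D(z)G(z,X)/D(\alpha)$, I get $Q\in\OO{\RQ Q}[z,X]$ of the same $X$-degree as $P$ with $Q(z,V(z))=D(z)G(z,V(z))/D(\alpha)=0$, and
$$Q(\alpha,X)=G(\alpha,X)=Q'\bigl(\beta,B_\ell(\alpha)^{-1}X\bigr)=P'\bigl(B_\ell(\alpha)^{-1}X\bigr)=P(X),$$
which is exactly the assertion.

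There is no real obstacle here: all the substance sits in Proposition~\ref{prop:indalg}, hence ultimately in Nishioka's Theorem~\ref{thm:nishioka} via Theorem~\ref{thm:indalgbis} and in the regularity Lemma~\ref{lem:extreg}. The only points requiring attention are that the twisting matrix $B_\ell$ and the clearing polynomial $D$ involve poles that are all singularities of~\eqref{eq:systintro}, so that $B_\ell(\alpha)$ remains invertible and $D(\alpha)\ne 0$ at the given $\alpha$, and that substituting $B_\ell(z)^{-1}X$ for $X$ is an invertible linear change of the variables $X$, which preserves degrees in $X$ so that $Q$ indeed has the same $X$-degree as $P$.
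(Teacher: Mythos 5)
Your proof is correct and follows essentially the same route as the paper's: push $\alpha$ to $\beta=\alpha^{q^\ell}$ inside the radius $\rho'$, twist $P$ by the invertible matrix $B_\ell(\alpha)=A^{(\ell)}(\alpha)$, invoke Proposition~\ref{prop:indalg} at $\beta$, and transport back by substituting $z\mapsto z^{q^\ell}$, $X\mapsto B_\ell(z)^{-1}X$ (which is precisely the action of $\tilde\sigma^\ell$ used in the paper) before clearing denominators, which are supported at singularities so do not vanish at $\alpha$. The only cosmetic differences are that you use an unspecified clearing polynomial $D$ instead of the explicit power $\Delta(z)^{\deg_X Q_0}$, and that you phrase the $\tilde\sigma^\ell$-stability of the relation ideal as a direct substitution in the identity $Q'(z,V(z))=0$; both are equivalent.
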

Lorsque l'idéal $\AA P$ est homogène en $X$ et $P$ est un élément homogène de $\AA P_\alpha$, on peut choisir le polynôme $Q$ également homogène en $X$, de même degré que $\AA P$.
\begin{proof}
Il existe un entier $\ell\in\RQ N$ tel que $0<|\alpha^{q^\ell}|\leq\rho'$ dans les notations de la proposition~\ref{prop:indalg}. Comme $\alpha$ n'est pas singularité de~\eqref{eq:systintro} la matrice $A^{(\ell)}(\alpha)$ est définie et inversible. Maintenant, si $P\in\OO{\RQ Q}[X]$ satisfait $P(V(\alpha))=0$ alors $P_0(X):=P(A^{(\ell)}(\alpha)X)\in\OO{\RQ Q}[X]$ satisfait $P_0(V(\alpha^{q^\ell}))=0$, car $V(\alpha)=A^{(\ell)}(\alpha)V(\alpha^{q^\ell})$ d'après~\eqref{eq:systintro} itérée (\emph{i.e.}~\eqref{eq:systitere}). De plus, $P_0$ a même degré en $X$ que $P$. Comme $|\alpha^{q^\ell}|\le\rho'$, la proposition~\ref{prop:indalg} entraîne l'existence de $Q_0\in\AA P\cap \OO{\RQ Q}[z,X]$ de même degré en $X$ que $P_0$, tel que $P_0(X)=Q_0(\alpha^{q^\ell},X)$.

Appliquons $\tilde\sigma^\ell$ à $P_0$, on obtient d'une part
$$\tilde\sigma^\ell(P_0(X)) = \tilde\sigma^\ell(Q_0(\alpha^{q^\ell},X)) = Q_0(\alpha^{q^\ell},A^{(-\ell)}(z)X)
$$
et d'autre part, par la définition de $P_0$,
$$\tilde\sigma^\ell(P_0(X)) = \tilde\sigma^{\ell}(P(A^{(\ell)}(\alpha)X)) = P(A^{(\ell)}(\alpha)A^{(-\ell)}(z)X)
\enspace.$$
On en déduit
\begin{equation}\label{eq:dspreuvecorindalg}
P(A^{(\ell)}(\alpha)A^{(-\ell)}(z)X) = Q_0(\alpha^{q^\ell},A^{(-\ell)}(z)X)
\enspace.\end{equation}

En multipliant $Q_0(\alpha^{q^\ell},A^{(-\ell)}(z)X)\in \OO{\RQ Q}(z)[X]$ par une puissance convenable de $\Delta(z):=T_A(z)T_A(z^q)\dots T_A(z^{q^{\ell-1}})$ on obtient un polynôme en $z$ et $X$ et, vu la contrainte imposée à $\alpha$, on a $\Delta(\alpha)\not=0$. Donc, la spécialisation de $z$ à $\alpha$ dans $Q_0(\alpha^{q^\ell},A^{(-\ell)}(z)X)$ est bien définie et, avec~\eqref{eq:dspreuvecorindalg}, donne l'égalité
$$P(X) = Q_0(\alpha^{q^\ell},A^{(-\ell)}(\alpha)X)
\enspace,$$
car $A^{(\ell)}(\alpha)A^{(-\ell)}(\alpha)=\RR{Id}$. Mais $Q_0\in\AA P$ et $\AA P$ est stable par l'action de $\tilde\sigma^\ell$, aussi $\tilde\sigma^\ell(Q_0) = Q_0(z^{q^\ell},A^{(-\ell)}(z)X)\in\AA P\cap \OO{\RQ Q}(z)[X]$. En posant
$$Q(z,X) = \left(\frac{\Delta(z)}{\Delta(\alpha)}\right)^{\RR d^\circ_XQ_0}\cdot Q_0(z^{q^\ell},A^{(-\ell)}(z)X)$$
on a $Q\in\OO{\RQ Q}[z,X]$, $Q(z,V(z))=0$, $P(X)=Q(\alpha,X)$ et le degré en $X$ de $Q$ est le même que celui des polynômes $Q_0$, $P_0$ et $P$, comme requis.
\end{proof}

\begin{undef}{Remarque}
La structure des démonstrations des corollaires~\ref{prop:indalg} et~\ref{cor:indalgi} ci-dessus s'apparente à celle de~\cite[Theorem 1.6.1]{An=12}. La notion d'idéal absolument premier ici correspond à celle de schéma géométriquement intègre là. Et c'est l'action du groupe de Galois différentiel qui dans~\cite{An=12} assure, au travers de la notion de fibré algébrique, l'uniformité de l'intégrité des fibres, ici remplacée dans la preuve du corollaire~\ref{cor:indalgi} par l'action de l'endomorphisme $\tilde\sigma$. 
\end{undef}

\begin{cor}\label{cor:indalgii}
Supposons que le degré de transcendance de $\OO{\RQ Q}(V(z))$  sur $\OO{\RQ Q}$ soit égal à son degré de transcendance sur $\OO{\RQ Q}(z)$. Autrement-dit, on suppose $z$ transcendant sur $\OO{\RQ Q}(V(z))$.

Soit $\alpha\in\OO{\RQ Q}\subset\RQ C$, $0<|\alpha|<1$, n'appartenant pas à l'ensemble singulier de~\eqref{eq:systintro}, alors les relations de dépendance algébrique sur $\OO{\RQ Q}$ entre les coefficients de $V(\alpha)$ sont les relations de dépendance algébrique sur $\OO{\RQ Q}$ entre les coefficients de $V(z)$.
\end{cor}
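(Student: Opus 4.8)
Le plan est de d\'eduire l'\'enonc\'e du corollaire~\ref{cor:indalgi}, l'hypoth\`ese de transcendance de $z$ sur $\OO{\RQ Q}(V(z))$ ne servant qu'\`a rigidifier le polyn\^ome $Q$ que ce corollaire fournit. Concr\`etement, il s'agit de montrer l'\'egalit\'e des deux id\'eaux de $\OO{\RQ Q}[X]$ constitu\'es respectivement des $P$ tels que $P(V(\alpha))=0$ et des $P$ tels que $P(V(z))=0$ identiquement. L'inclusion du second id\'eal dans le premier est imm\'ediate: elle s'obtient en sp\'ecialisant $z$ en $\alpha$.

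Pour l'inclusion r\'eciproque, je partirais d'un $P\in\OO{\RQ Q}[X]$ v\'erifiant $P(V(\alpha))=0$. Comme $\alpha$ n'appartient pas \`a l'ensemble singulier de~\eqref{eq:systintro}, le corollaire~\ref{cor:indalgi} fournit un polyn\^ome $Q\in\OO{\RQ Q}[z,X]$ tel que $Q(z,V(z))=0$ et $P(X)=Q(\alpha,X)$. J'\'ecrirais alors $Q(z,X)=\sum_{k\geq0}z^kQ_k(X)$ avec $Q_k\in\OO{\RQ Q}[X]$ (somme finie, $Q$ \'etant polynomial en $z$), de sorte que l'identit\'e $Q(z,V(z))=0$ prend la forme $\sum_{k\geq0}z^kQ_k(V(z))=0$, relation entre les \'el\'ements $Q_k(V(z))$ du corps $\OO{\RQ Q}(V(z))$ pond\'er\'es par les puissances de $z$. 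Puisque $z$ est transcendant sur $\OO{\RQ Q}(V(z))$, les puissances $1,z,z^2,\dots$ y sont lin\'eairement ind\'ependantes, d'o\`u $Q_k(V(z))=0$ pour tout $k$. On conclut
$$P(V(z)) = Q(\alpha,V(z)) = \sum_{k\geq0}\alpha^kQ_k(V(z)) = 0\enspace,$$
c'est-\`a-dire que $P$ annule d\'ej\`a les fonctions $u_{i,j}(z)$. Cela donne l'inclusion r\'eciproque, donc l'\'egalit\'e des deux id\'eaux et l'\'enonc\'e.

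Je ne vois pas ici d'obstacle s\'erieux: tout le travail de fond a \'et\'e absorb\'e dans le corollaire~\ref{cor:indalgi}, et le seul ingr\'edient suppl\'ementaire est la remarque \'el\'ementaire que les puissances de $z$ sont lin\'eairement ind\'ependantes sur $\OO{\RQ Q}(V(z))$, ce qui traduit exactement l'hypoth\`ese. Le seul point technique est que le polyn\^ome $Q$ fourni par le corollaire~\ref{cor:indalgi} a bien ses coefficients dans $\OO{\RQ Q}[z]$ (et non dans $\OO{\RQ Q}(z)$), ce qui l\'egitime \`a la fois le d\'eveloppement en puissances de $z$ \`a coefficients $Q_k\in\OO{\RQ Q}[X]$ et la sp\'ecialisation $z\mapsto\alpha$, cette derni\`ere \'etant de plus garantie par le choix de $\alpha$ hors de l'ensemble singulier.
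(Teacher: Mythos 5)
Your argument is correct and is precisely the paper's proof, merely unpacked: the paper compresses your steps into the single assertion that $\AA P$ is generated by $\AA P\cap\OO{\RQ Q}[X]$ under the transcendence hypothesis, which is exactly what your decomposition $Q=\sum_k z^kQ_k$ and the linear independence of the powers of $z$ over $\OO{\RQ Q}(V(z))$ establish. Nothing is missing.
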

\begin{proof}
Il résulte du corollaire~\ref{cor:indalgi} et de l'hypothèse de l'énoncé qu'on a $\AA P_\alpha=\AA P\cap\OO{\RQ Q}[X]$, car $\AA P$ est engendré par $\AA P\cap\OO{\RQ Q}[X]$.
\end{proof}

\section{Indépendance linéaire}\label{sec:independancelin}
Dans cette section on reprend les notations des paragraphes précédents avec $\RQ k=\OO{\RQ Q}$ et $K$ sa clôture inversive.

Notre but est d'obtenir des énoncés d'indépendance linéaire sur $\OO{\RQ Q}$ des valeurs des composantes d'un vecteur solution de~\eqref{eq:systintro}. Notamment, de démontrer le théorème~\ref{thm:indlinautsing}, avec en particulier l'enjeu de ne pas exclure l'ensemble singulier du système d'équations fonctionnelles.

Commençons par un lemme reliant indépendance linéaire des fonctions et orbite galoisienne.

\begin{lem}\label{lem:indlin}
Soit $\QQ f(z)\in \RQ k[[z]]$ un vecteur solution de~\eqref{eq:systintro} et $\SS S$ l'espace des solutions engendré par les colonnes $\QQ u_1(z),\dots,\QQ u_N(z)$ d'une matrice fondamentale de solutions de~\eqref{eq:systintro}, contenant $\QQ f(z)$. 

La dimension de la sous-représentation du groupe de Galois sur le \pre{\RQ k}espace vectoriel $\SS S'\subset\SS S$ engendré par l'orbite de $\QQ f(z)$ est égale à la dimension du \pre{K}espace vectoriel engendré par les composantes de $\QQ f(z)$. 
\end{lem}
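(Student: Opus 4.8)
The plan is to transport the question to linear algebra on the coordinate ring of the Galois group. Fix the fundamental matrix $U$ whose columns $\QQ u_1,\dots,\QQ u_N$ span $\SS S$, and write $\QQ f=U\QQ c$ with a constant vector $\QQ c\in\RQ k^N$ (this is precisely the hypothesis $\QQ f\in\SS S$, in the vein of Proposition~\ref{wronskien}). For $g\in G$ represented by the matrix $y_g$ one has $g(\QQ u_j)=\sum_i(y_g)_{ij}\QQ u_i$, hence $g(\QQ f)=U\,(y_g\QQ c)$; therefore the $\RQ k$-span of the orbit is $\SS S'=U\cdot W$ with $W:=\mathrm{span}_{\RQ k}\{y_g\QQ c\;;\;g\in G\}\subseteq\RQ k^N$. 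Since $U$ is invertible, $\dim_{\RQ k}\SS S'=\dim_{\RQ k}W=:d$, which is the dimension of the sub-representation carried by $\SS S'$. It thus suffices to prove $\dim_K M=d$, where $M$ denotes the $K$-vector space spanned inside $R$ by the components $f_1,\dots,f_N$ of $\QQ f$.

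Second, I would unwind the isomorphism $R=\RQ k[G]\otimes_{\RQ k}K$ recalled in section~\ref{sec:Galois} at the level of the fundamental matrix: under it, $U$ corresponds to $B\bar Y$, where $\bar Y\in\mathrm{GL}_N(\RQ k[G])$ is the tautological matrix (so $\RQ k[\bar Y_{ij},\det\bar Y^{-1}]=\RQ k[G]\subseteq R$ and $g(\bar Y)=\bar Y y_g$, the right regular action) and $B\in\mathrm{GL}_N(K)$ is the specialisation of $U$ at the identity of $G$; indeed $B=U\bar Y^{-1}$ is $G$-invariant, hence lies in $\mathrm{GL}_N(R^G)=\mathrm{GL}_N(K)$. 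This is exactly the content of~\cite[Corollary 1.18 \& Proposition 1.21]{vdPS=97} as quoted in section~\ref{sec:Galois}. Putting $\QQ{\eta}:=\bar Y\QQ c\in(\RQ k[G])^N$ we get $\QQ f=B\QQ{\eta}$; since $B$ is invertible over $K$, the components of $\QQ f$ and the $\eta_1,\dots,\eta_N$ generate the same $K$-subspace of $R$, so $M=\mathrm{span}_K\{\eta_1,\dots,\eta_N\}$. As $R=\RQ k[G]\otimes_{\RQ k}K$ is a free base change and the $\eta_k$ lie in $\RQ k[G]$, $K$-linear dependence of the $\eta_k$ agrees with $\RQ k$-linear dependence, so $\dim_K M=\dim_{\RQ k}\mathrm{span}_{\RQ k}\{\eta_1,\dots,\eta_N\}$.

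Third, I would evaluate this last dimension on the $\RQ k$-points of $G$. Viewed as regular functions on $G$, $\eta_k(g)=(y_g\QQ c)_k$, so for $\QQ a\in\RQ k^N$ the combination $\sum_k a_k\eta_k$ is the function $g\mapsto\langle\QQ a,y_g\QQ c\rangle$. Since $\RQ k=\OO{\RQ Q}$ is algebraically closed and $\RQ k[G]$ has no nilpotents, this function is the zero element of $\RQ k[G]$ exactly when $\QQ a$ is orthogonal to every $y_g\QQ c$, i.e. when $\QQ a\in W^{\perp}$. Hence the linear map $\RQ k^N\to\RQ k[G]$, $\QQ a\mapsto\sum_k a_k\eta_k$, has kernel of dimension $N-d$ and image $\mathrm{span}_{\RQ k}\{\eta_1,\dots,\eta_N\}$ of dimension $d$. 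Combining the three steps yields $\dim_K M=d=\dim_{\RQ k}\SS S'$, which is the assertion.

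The routine ingredients — the orbit computation, flat base change, and the evaluation map — are harmless; the step that genuinely carries the Galois theory, and where I expect the most care to be needed, is the explicit factorisation $U=B\bar Y$ identifying the abstract fundamental matrix with a $K$-translate of the tautological matrix of $\RQ k[G]$ inside $R$. One of the two inequalities can in fact be obtained without it: if $f_{i_1},\dots,f_{i_e}$ is a $K$-basis of $M$, then $\QQ f=\Lambda\QQ g_0$ with $\QQ g_0$ the column vector of the $f_{i_j}$ and $\Lambda\in\mathrm{Mat}_{N\times e}(K)$ of rank $e=\dim_K M$; applying $g\in G$, which fixes $\Lambda$, shows $\SS S'$ lies in the $e$-dimensional $K(U)$-span of the columns of $\Lambda$, whereas $\SS S'=U\cdot W$ spans a $d$-dimensional $K(U)$-subspace of $K(U)^N$, so $d\le\dim_K M$.
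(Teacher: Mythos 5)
Your proof is correct and takes a genuinely different route from the one in the paper. You rely on the triviality of the Picard--Vessiot torsor over $K$, i.e.\ the $G$-equivariant isomorphism $R\simeq\RQ k[G]\otimes_{\RQ k}K$ quoted from \cite[Corollary 1.18]{vdPS=97}, and use it to factor the fundamental matrix as $U=B\bar Y$ with $B\in\RR{Gl}_N(K)$ and $\bar Y$ the tautological matrix of $\RQ k[G]$ (your justification --- $B=U\bar Y^{-1}$ is $G$-invariant, hence has coefficients in $R^G=K$ --- is exactly the right one). This converts the whole statement into linear algebra on $\RQ k[G]$: $K$-linear dependence of the components of $\QQ f(z)$ is transported to $\RQ k$-linear dependence of the functions $\eta_k=(\bar Y\QQ c)_k\in\RQ k[G]$, and the latter is decided by evaluating at $\RQ k$-points of $G$ (Nullstellensatz, $\RQ k[G]$ reduced, $\RQ k$ algebraically closed). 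The paper's argument does not invoke the full torsor triviality: it uses the proposition~\ref{prop:PV} to take a fundamental matrix whose first $r$ columns form a basis of $\SS S'$, observes that the $N\times r$ matrix $\SS M$ of their components has rank $r$ over $K(U)$, and notes that each element of the Galois group multiplies every $r\times r$ minor of $\SS M$ by the same scalar $\det(y')$, so that ratios of such minors are Galois-invariant and therefore lie in $K$; these ratios give an explicit $(N-r)$-dimensional $K$-basis of the space of $K$-linear relations among the components of $\QQ f(z)$. Your version is conceptually cleaner and makes the role of the trivialisation $R\simeq\RQ k[G]\otimes_{\RQ k}K$ transparent; the paper's is more elementary (it only needs Galois-invariants to lie in $K$) and yields an explicit basis of the relation space. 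Both are valid over the algebraically closed constant field $\RQ k=\OO{\RQ Q}$ in force here.
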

\begin{proof}
L'existence d'une extension de Picard-Vessiot $R$ engendrée par une matrice fondamentale de solutions $U(z)=(\QQ u_1(z),\dots,\QQ u_N(z))$ de \eqref{eq:systintro} telle que $\QQ u_1(z)=\QQ f(z)$, est assurée par la proposition~\ref{prop:PV}. 

Soit $r$ la dimension de l'orbite de $\QQ f(z)$ sous l'action du groupe de Galois et $\QQ f(z)=\QQ u'_1(z),\QQ u'_2(z),\dots,\QQ u'_r(z)\in R^N$ une base de $\SS S'$ composée d'images de $\QQ f(z)$ par des éléments du groupe de Galois. Notons $\boldsymbol{\lambda}_1,\dots,\boldsymbol{\lambda}_{r}\in\RQ k^N$ les vecteurs des coordonnées de $\QQ u'_1(z),\dots,\QQ u'_r(z)$ respectivement, dans la base $\QQ u_1(z),\dots,\QQ u_N(z)$ de $\SS S$ et $\boldsymbol{\mu}_1,\dots,\boldsymbol{\mu}_{N-r}\in\RQ k^N$ un ensemble complet de vecteurs orthogonaux aux $\boldsymbol{\lambda}_i$. Alors, un vecteur de $K(U)^N$ orthogonal à $\QQ u'_1(z),\dots,\QQ u'_r(z)$ est combinaison linéaire des vecteurs ${}^tU(z)^{-1}\boldsymbol{\mu}_i$, qui sont linéairement indépendants sur $K(U)$. Ainsi la dimension de l'espace vectoriel engendré sur $K(U)$ par $\QQ u'_1(z),\dots,\QQ u'_r(z)$ est égal à la dimension de $\SS S'$ sur $\RQ k$ et la matrice $N\times r$ des composantes des vecteurs $\QQ u'_1(z),\dots,\QQ u'_{r}(z)$, notée $\SS M$ dans la suite, est de rang $r$. Quitte à réordonner les coordonnées on peut supposer que le mineur $\Delta$ des $r$ premières lignes de $\SS M$ est non nul. Pour $1\le i\le r$ et $r+1\le j\le N$ notons $\Delta_{i,j}$ le mineur déduit de $\Delta$ par substitution de la \pre{j}ième ligne de $\SS M$ en lieu et place de sa \pre{i}ième ligne. 

Un élément $y$ du groupe de Galois agit sur $\SS S'$ et transforme la matrice $\SS M$ par multiplication à droite par une matrice $r\times r$ à coefficients dans $\RQ k$, notée $y'$. L'action de $y$ multiplie ainsi chaque $\Delta_{i_1,\dots,i_r}$ par $\det(y')$. Il suit que le quotient de deux tels mineurs est invariant par le groupe de Galois et appartient donc à $K$. Une base du \pre{K}espace vectoriel des vecteurs de $K^N$ orthogonaux à $\QQ u'_1(z),\dots,\QQ u'_r(z)$ est donnée par les lignes de la matrice
$$\left(\begin{array}{ccccccc}
\scriptstyle\Delta_{1,r+1}/\Delta &\dots &\scriptstyle\Delta_{r,r+1}/\Delta &\scriptstyle1 &\scriptstyle0 &\dots &\scriptstyle0\\
\vdots & &\vdots &\scriptstyle0 &\ddots &\ddots &\vdots\\[-1mm]
\vdots & &\vdots &\vdots &\ddots &\ddots &\scriptstyle0\\
\scriptstyle\Delta_{1,N}/\Delta &\dots &\scriptstyle\Delta_{r,N}/\Delta &\scriptstyle0 &\dots &\scriptstyle0 &\scriptstyle1\\
\end{array}\right)
\enspace.$$
Maintenant, ce \pre{K}espace vectoriel coïncide avec le \pre{K}espace vectoriel des relations entre les composantes de $\QQ f(z)$, qui est donc de dimension $N-r$. Il suit que $r$ est égal à la dimension du \pre{K}espace vectoriel engendré par les composantes de $\QQ f(z)$.
\end{proof}

On peut alors énoncer~:

\begin{cor}\label{cor:indlin}
On suppose qu'il existe une matrice fondamentale de solutions $U(z)$ de~\eqref{eq:systintro} dont tous les coefficients sont analytiques dans le disque unité sans bord de $\RQ C$. Soit $\QQ f(z)$ un vecteur solution de~\eqref{eq:systintro} dont les composantes $f_1(z),\dots,f_N(z)$ sont dans $\OO{\RQ Q}(z,U(z))$ et linéairement indépendantes sur $\OO{\RQ Q}(z)$.

Alors, pour $\alpha\in\OO{\RQ Q}\subset\RQ C$, $0<|\alpha|<1$, qui n'est pas une singularité de~\eqref{eq:systintro}, les nombres $f_1(\alpha),\dots, f_N(\alpha)$ sont linéairement indépendants sur $\OO{\RQ Q}$.
\end{cor}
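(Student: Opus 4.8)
The idea is to reduce the linear-independence statement over $\OO{\RQ Q}$ to the algebraic-independence transfer already established (Corollaire~\ref{cor:indalgi}), applied not to $V(z)$ but to a matrix assembled from the solution vectors $\QQ u_1(z),\dots,\QQ u_N(z)$ of a fundamental matrix $U(z)$ together with the extra data that $\QQ f(z)$ lives in $\OO{\RQ Q}(z,U(z))$. First I would write $f_i(z)=R_i(z,U(z))$ with $R_i$ rational over $\OO{\RQ Q}$, clear denominators, and regard the $f_i(z)$ as polynomial (after inverting $\det U$ and $T$) expressions in the coordinates $u_{i,j}(z)$; since all coefficients of $U(z)$ are analytic in the open unit disc, this fits the hypotheses of Section~\ref{sec:regspe} with $\rho=1$, with $V(z)$ taken to be the matrix $U(z)$ (and $\det U^{-1}$ adjoined, which changes nothing since $\det U$ is itself a solution-type quantity with poles only at singularities). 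So $\OO{\RQ Q}(z,U(z))\subset\OO{\RQ Q}(z,V)$ is a regular extension by Lemme~\ref{lem:extreg}.

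Next, the key point is to translate ``$f_1(z),\dots,f_N(z)$ linearly independent over $\OO{\RQ Q}(z)$'' and its putative failure at $\alpha$ into the language of homogeneous linear forms in the entries of a suitable matrix. Suppose, for contradiction, that $\sum_{i=1}^N c_i f_i(\alpha)=0$ with $c_i\in\OO{\RQ Q}$ not all zero. This is a relation $P(V(\alpha))=0$ for a suitable $P$ obtained by substituting the rational expressions for the $f_i$ and clearing denominators — here I must be slightly careful that the denominator $\det U(\alpha)$ and the value $T(\alpha^{q^\ell})$ do not vanish, which is exactly guaranteed by the hypothesis that $\alpha$ is not a singularity and by the fact (recalled in Section~\ref{sec:regspe}) that poles of the $u_{i,j}$ are singularities, so $U(\alpha)$ is well-defined and invertible. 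Applying Corollaire~\ref{cor:indalgi} to this $P$ produces $Q(z,X)\in\OO{\RQ Q}[z,X]$ of the same degree in $X$, with $Q(z,V(z))=0$ and $P(X)=Q(\alpha,X)$. Unwinding the substitution, $Q(z,V(z))=0$ says precisely that $\sum_{i=1}^N c_i f_i(z)=0$ as a functional identity (the degree-one-in-the-$f_i$ structure is preserved because $P$ was linear in the $f_i$ before the substitution, and one checks the homogeneous part survives specialization), contradicting the linear independence of the $f_i$ over $\OO{\RQ Q}(z)$.

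The step that needs the most care — and which I expect to be the main obstacle — is ensuring that the linear structure of the relation is genuinely transported: after clearing denominators, $P$ is a polynomial in the $u_{i,j}$, not obviously ``linear'', so the naive degree-in-$X$ bookkeeping in Corollaire~\ref{cor:indalgi} does not by itself force $Q(z,X)$ to encode a linear relation among the $f_i(z)$. The clean way around this is to invoke Lemme~\ref{lem:indlin}: the $\OO{\RQ Q}(z)$-linear span of $\QQ f(z)$ under the Galois group has dimension equal to the dimension of the $K$-vector space spanned by the components of $\QQ f(z)$, and ``$f_1,\dots,f_N$ linearly independent over $\OO{\RQ Q}(z)$'' forces that dimension to be $N$, i.e. the Galois orbit of $\QQ f(z)$ spans all of $\SS S$. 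One then argues that if the values $f_i(\alpha)$ satisfied a nontrivial $\OO{\RQ Q}$-linear relation, the transfer in Corollaire~\ref{cor:indalgi} (applied to the homogeneous linear form, using the homogeneity addendum in its statement) would force the same relation among the $f_i(z)$ — contradiction. So the real work is a careful matching of the ``homogeneous in $X$, same degree'' output of Corollaire~\ref{cor:indalgi} with the degree-one linear forms, together with checking that $\alpha$ avoids all the relevant vanishing loci ($\det U$, $T$, and the finite exceptional set of Proposition~\ref{prop:indalg}); the rest is formal.
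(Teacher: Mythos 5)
Your plan correctly identifies the two key tools (Corollaire~\ref{cor:indalgi} and Lemme~\ref{lem:indlin}), but the final chain of deduction, which you yourself flag as ``the step that needs the most care,'' is not actually carried out correctly, and as written it would fail. The transferred polynomial $Q(z,X)$ produced by Corollaire~\ref{cor:indalgi} is a degree-$1$ polynomial in \emph{all} the $N^2$ variables $X_{i,j}$ with coefficients in $\OO{\RQ Q}[z]$, with only the fibre $Q(\alpha,X)=P(X)$ prescribed. The identity $Q(z,U(z))=0$ therefore involves all the entries $u_{i,j}(z)$, not just the $N$ combinations $f_i(z)=\sum_j u_{i,j}(z)\mu_j$, so it does \emph{not} ``force the same relation among the $f_i(z)$.'' The homogeneity addendum to Corollaire~\ref{cor:indalgi} does not rescue this: it requires $\AA P$ to be homogeneous in $X$, which the ideal of relations between the entries of a fundamental matrix is not in general (e.g. $X_{i,j}-u_{i,j}$ is inhomogeneous whenever $u_{i,j}$ is a nonzero constant). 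Your closing sentence, ``a careful matching of the homogeneous outputs with the degree-one linear forms,'' names a step that is not available.

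The argument that actually closes the gap is different. Start cleanly from Proposition~\ref{wronskien}: there is $\QQ\mu\in\OO{\RQ Q}^N$ with $\QQ f(z)=U(z)\QQ\mu$, so the relation at $\alpha$ reads $P(U(\alpha))=0$ with $P(X)=\langle\QQ\lambda,X\QQ\mu\rangle$ already linear — no rational expressions to clear. Corollaire~\ref{cor:indalgi} then gives $Q(z,X)$, linear in $X$, with $Q(z,U(z))=0$ and $Q(\alpha,X)=P(X)$, i.e.\ $\langle\QQ\lambda,\QQ f(z)\rangle=(z-\alpha)R(z,U(z))$ for some $R\in\OO{\RQ Q}[z,X]$. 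Now one must act by the Galois group: for each $y\in G$ the polynomial $Q(z,Xy)$ still lies in $\AA P$, so $\langle\QQ\lambda,U(z)y\QQ\mu\rangle=(z-\alpha)R(z,U(z)y)$; specializing at $z=\alpha$ (this is where the analyticity hypothesis on $U$ in the open unit disc is genuinely used, together with the fact that any pole of an entry of $U$ is a singularity of~\eqref{eq:systintro}) yields $\langle\QQ\lambda,U(\alpha)y\QQ\mu\rangle=0$ for every $y\in G$. By Lemme~\ref{lem:indlin} the orbit is full, so the vectors $y\QQ\mu$ span $\OO{\RQ Q}^N$, and since $\QQ\lambda\neq 0$ this forces $\det U(\alpha)=0$. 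Finally, the functional equation $\det U(z)=\det A(z)\,\det U(z^q)$ and $\det U(0)\neq 0$ show there is a largest $\ell$ with $\det U(\alpha^{q^\ell})=0$, whence $\det A(\alpha^{q^\ell})=0$ and $\alpha^{q^\ell}$ is a pole of $A(z)^{-1}$, contradicting the non-singularity of $\alpha$. Note in particular that ``$\alpha$ not a singularity $\Rightarrow\det U(\alpha)\neq0$'' is not an a priori given to be invoked up front, as you do; it is established at the end by exactly this determinant computation, and using it as a hypothesis would make the argument circular.
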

\begin{proof}
D'après la proposition~\ref{wronskien} il existe $\QQ\mu\in\OO{\RQ Q}^N$ tel que $\QQ f(z)=U(z)\QQ\mu$. En particulier, les fonctions $f_1(z),\dots,f_N(z)$ sont analytiques en l'origine et on en déduit que l'hypothèse d'indépendance linéaire sur $\OO{\RQ Q}(z)$ est équivalente à leur indépendance linéaire sur la clôture inversive $K$ de $\OO{\RQ Q}(z)$.

Soit $\langle\QQ\lambda,\QQ f(\alpha)\rangle=0$ une relation avec $\QQ\lambda\in\OO{\RQ Q}^N$ non nul. D'après le corollaire~\ref{cor:indalgi} elle provient d'une relation $\langle\QQ\lambda,\QQ f(z)\rangle=(z-\alpha)R(z,U(z))$ avec $R\in\OO{\RQ Q}[z,X]$, qui entraîne, pour tout $y\in G\subset\RR{Gl}(\RQ k^N)$,
$$\langle\QQ\lambda,U(z)y\QQ\mu\rangle=(z-\alpha)R(z,U(z)y)
\enspace,$$
puis $\langle\QQ\lambda,U(\alpha)y\QQ\mu\rangle=0$. D'après le lemme~\ref{lem:indlin} l'indépendance linéaire sur $K$ des composantes de $\QQ f(z)$ implique que l'orbite de $\QQ f(z)$ sous l'action du groupe de Galois $G$ est de dimension maximale $N$ et donc que les vecteurs $y\QQ\mu$ engendent $\OO{\RQ Q}^N$ lorsque $y$ parcourt $G$. Comme le vecteur $\QQ\lambda$ n'est pas nul, ceci entraîne $\det(U(\alpha))=0$, car sinon les vecteurs $U(\alpha)y\QQ\mu$, $y\in G$, engendreraient $\OO{\RQ Q}^N$. Soit $\ell\in\RQ N$ le plus grand entier  tel que $\det(U(\alpha^{q^\ell}))=0$, il suit de~\eqref{eq:systintro} et $\det(U(z))=\det(A(z))\det(U(z^q))$ que $\det(A(\alpha^{q^\ell}))=0$. Ainsi $\alpha^{q^\ell}$ est un pôle de $\det(A(z)^{-1})$ et donc de $A(z)^{-1}$, contrairement à l'hypothèse.
\end{proof}

Voici deux exemples montrant que l'hypothèse d'existence d'une matrice fondamentale de solutions analytique dans le corollaire~\ref{cor:indlin} ne peut être supprimée et que celle d'indépendance linéaire de toutes les composantes du vecteur solution ne peut être restreinte à une sous-famille de composantes.
\begin{exes}\label{exe:irreginfini}
1) Soit $q\ge 3$ un entier et $\alpha$ un nombre complexe satisfaisant $0<|\alpha|<1$, considérons le système d'équation fonctionnelles pour la transformation $z\mapsto z^q$
$$\begin{pmatrix}-\Delta(z)-1 &-\Delta(z)\\ 1 &0\end{pmatrix}
\enspace,$$
où $\Delta(z)=-\frac{1-z/\alpha^q}{(1-z^q/\alpha)(1+\dots+(z/\alpha)^{q-1})}$ est le déterminant du système. On observe que $\Delta(\alpha)=\frac{1}{q\alpha^{q-1}}$ et $\alpha$ n'est pas une singularité du système d'équations fonctionnelles. 

Une matrice fondamentale de solutions de ce système s'écrit
\begin{equation}\label{eq:matfondsiii}
\begin{pmatrix}f(z) &u(z)\\ f(z^q) &u(z^q)\end{pmatrix}
\enspace,\end{equation}
où les fonctions $f(z)$ et $u(z)$ sont données par les développements
$$\begin{cases}
f(z) = \frac{1}{2} - \big(\frac{1}{\alpha}+\frac{1}{\alpha^q}\big)g(z) + \frac{1}{\alpha^{q+1}}g(z^2)\enspace,\quad g(z) = \sum_{i\in\RQ N}(-1)^iz^{q^{i}}\\[2mm]
u(z) = \frac{1}{2} - \big(\frac{1}{\alpha}+\frac{1}{\alpha^q}\big)v(z) + \frac{1}{\alpha^{q+1}}v(z^2)\enspace,\quad v(z) = \sum_{i\in\RQ N}(-1)^{i}z^{q^{-i-1}}
\end{cases}
\enspace.$$
On note que $u(z)$ et $v(z)$ ne sont analytiques en aucun point. Les fonctions $g(z)$ et $v(z)$ satisfont $g(z)+g(z^q)=v(z)+v(z^q)=z$, d'où
\begin{multline}\label{eq:dsexiii}
f(z)+f(z^q) = u(z)+u(z^q) =\\ 1 - \Big(\frac{1}{\alpha}+\frac{1}{\alpha^q}\Big)z + \frac{1}{\alpha^{q+1}}z^2 = \left(1-\frac{z}{\alpha}\right)\left(1-\frac{z}{\alpha^q}\right)
\end{multline}
et aussi que $g(z)$ et $g(z^q)$ sont algébriquement indépendantes sur $K$. On observera que si $h(z)=\sum_{i\in\RQ N}z^{q^{2i}}$ alors $g(z)=h(z)-h(z^q)$ et, comme $q\ge3$, on sait que les séries $h(z)$, $h(z^2)$, $h(z^q)$ et $h(z^{2q})$ sont algébriquement indépendantes sur $K$. En particulier, la fonction $f(z)$ est transcendante sur $K$.

On a $f(\alpha)+f(\alpha^q)=0$ d'après~\eqref{eq:dsexiii}, tandis que les fonctions $f(z)$ et $f(z^q)$ ne sont liées par aucune relation linéaire (homogène) sur $K$ (en effet on a $f(z^q)/f(z) = (1-z/\alpha)(1-z/\alpha^q)f(z)^{-1}-1 \notin K$). Ceci est toutefois compatible avec le corollaire~\ref{cor:indlin} car la matrice fondamentale de solutions~\eqref{eq:matfondsiii} a des coefficients qui ne sont pas analytiques en $\alpha$. Mais, conformément au corollaire~\ref{cor:indalgi} il existe bien une relation algébrique (inhomogène) de degré $1$ liant les fonctions $f(z)$ et $f(z^q)$ sur $K$, à savoir~\eqref{eq:dsexiii}, qui se spécialise en la relation $f(\alpha)+f(\alpha^q)=0$.
\medskip

2) Considérons le système d'équations fonctionnelles de matrice
\begin{equation}\label{eq:dsexedeux}
\begin{pmatrix}1 &0 &0\\ 0 &z+z^2+z^3+z^6 &1-(z^3+z^6)^2\\ 0 &1 &0
\end{pmatrix}
\enspace,\end{equation}
pour la transformation $z\mapsto z^3$. Une matrice fondamentale de solutions s'écrit
$$\begin{pmatrix}1 &0 &0\\ 0 &\frac{1}{1-z} &f(z)\\ 0 &\frac{1}{1-z^3} &f(z^3)
\end{pmatrix}$$
avec $f(z)$ transcendante sur $K$. En désignant par $\SS N$ l'ensemble des entiers positifs dont le nombre de chiffres non nuls dans l'écriture en base $3$ est pair, on vérifie que $f(z)=1+\sum_{i\in\SS N} z^i$, qui converge dans le disque unité sans bord, convient.

Le groupe de Galois est $\RQ G_a$, sa représentation sur l'espace des solutions est donnée par $\lambda\mapsto \left(\begin{smallmatrix} \scriptstyle1 &\scriptstyle0 &\scriptstyle2\lambda\\[1pt] \scriptstyle0 &\scriptstyle1 &\scriptstyle\lambda\\[1pt] \scriptstyle0 &\scriptstyle0 &\scriptstyle1 \end{smallmatrix}\right)$, car les composantes du vecteur $\left(\begin{smallmatrix}\scriptstyle1\\[1pt]\scriptstyle f(z)\\[1pt] \scriptstyle f(z^3)\end{smallmatrix}\right)$ sont liées sur $K$ par la relation $(z^2+z-1)f(z^3)+f(z)-\frac{z+z^2}{1-z^3}=0$. On en déduit en particulier l'évaluation $f((\sqrt{5}-1)/2)=1/(3-\sqrt{5})$, tandis que les fonctions $1$ et $f(z)$ sont linéairement indépendantes sur $K$. Enfin, $(\sqrt{5}-1)/2$ n'est pas racine du déterminant $(z^6+z^3)^2-1$ de la matrice~\eqref{eq:dsexedeux} et n'est donc pas singularité du système correspondant.
\end{exes}

Nous pouvons également déduire directement du corollaire~\ref{cor:indalgi} le théorème~\ref{thm:indlinaut}. Le corollaire suivant servira à nouveau pour démontrer le théorème~\ref{thm:indlinautsing} aux arguments singuliers.

\begin{cor}\label{cor:thmindlinautnondeg}
Soit $\RQ k_0\subset\RQ C$ un corps de nombres, $\alpha\in\RQ k_0$, $0<|\alpha|<\rho$, et $1\le\ell\le N$. Supposons que la matrice $A(z)$ du système~\eqref{eq:systintro} soit à coefficients dans $\RQ k_0(z)$ et le vecteur $\QQ f(z)$ à composantes $f_1(z),\dots,f_N(z)$ dans $\RQ k_0[[z]]$. Si $\alpha$ n'appartient pas à l'ensemble singulier de~\eqref{eq:systintro} et si les nombres $f_{1}(\alpha),\dots,f_{\ell}(\alpha)$ sont linéairement indépendants sur $\RQ k_0$, alors ils sont linéairement indépendants sur $\OO{\RQ Q}$.
\end{cor}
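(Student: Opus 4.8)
The goal is to deduce Corollaire~\ref{cor:thmindlinautnondeg} from Corollaire~\ref{cor:indalgi} (which establishes Théorème~\ref{thm:indalg}). First I would reduce to the case $\ell=N$. Suppose $f_1(\alpha),\dots,f_\ell(\alpha)$ are linearly independent over $\RQ k_0$. Complete $f_1,\dots,f_\ell$ to a maximal $\RQ k_0(z)$-linearly independent (equivalently $K$-linearly independent, since the $f_i\in\RQ k_0[[z]]$ are analytic at the origin) subfamily $f_{i_1}=f_1,\dots,f_{i_\ell}=f_\ell,f_{i_{\ell+1}},\dots,f_{i_r}$ of the components of $\QQ f(z)$; actually the cleanest path is simply to work with $\QQ f$ itself and argue about the linear relations directly. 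The key point is: a $\OO{\RQ Q}$-linear relation $\sum_{i=1}^\ell \lambda_i f_i(\alpha)=0$ with $\QQ\lambda=(\lambda_1,\dots,\lambda_\ell)\in\OO{\RQ Q}^\ell$ not all zero is a relation $P(\QQ f(\alpha))=0$ for the \emph{linear} polynomial $P(X)=\sum_{i=1}^\ell\lambda_i X_i\in\OO{\RQ Q}[X_1,\dots,X_N]$ (of degree $\le 1$ in $X$).

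Next I would apply Corollaire~\ref{cor:indalgi}: since $\alpha$ is not in the singular set and $0<|\alpha|<\rho$ (here the hypothesis $|\alpha|<\rho$ takes the place of $|\alpha|<1$, but the same proof applies once one observes the $f_i$ converge in the disc of radius $\rho$, or more simply one invokes the iteration $\QQ f(\alpha)=A^{(\ell_0)}(\alpha)\QQ f(\alpha^{q^{\ell_0}})$ to push $\alpha$ inside radius $\rho'$ as in the proof of Corollaire~\ref{cor:indalgi}), there exists $Q\in\OO{\RQ Q}[z,X]$ of the same degree in $X$ as $P$ — hence of degree $\le 1$ in $X$ — with $Q(z,\QQ f(z))=0$ and $P(X)=Q(\alpha,X)$. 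Writing $Q(z,X)=c_0(z)+\sum_{i=1}^N c_i(z)X_i$ with $c_i\in\OO{\RQ Q}[z]$, the vanishing $Q(z,\QQ f(z))=0$ reads $c_0(z)+\sum_{i=1}^N c_i(z)f_i(z)=0$, and $P(X)=Q(\alpha,X)$ gives $c_i(\alpha)=\lambda_i$ for $1\le i\le\ell$, $c_i(\alpha)=0$ for $\ell<i\le N$, and $c_0(\alpha)=0$.

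The remaining step is to descend this relation from $\OO{\RQ Q}$-coefficients to $\RQ k_0$-coefficients. Here I would use a Galois-descent / averaging argument over $\RR{Gal}(\OO{\RQ Q}/\RQ k_0)$: the functions $f_1(z),\dots,f_N(z)$ and the element $\alpha$ all lie in $\RQ k_0$-structures, so for any $\tau\in\RR{Gal}(\OO{\RQ Q}/\RQ k_0)$ the conjugated identity $c_0^\tau(z)+\sum_i c_i^\tau(z)f_i(z)=0$ again holds (the $f_i$ have coefficients in $\RQ k_0$, hence are $\tau$-fixed as power series). The $\OO{\RQ Q}$-vector space of tuples $(c_0,c_1,\dots,c_N)\in\OO{\RQ Q}[z]^{N+1}$ (say of bounded degree) satisfying $c_0+\sum c_if_i=0$ is defined over $\RQ k_0$ — it is the $\OO{\RQ Q}$-span of a $\RQ k_0$-basis, because the relation ideal of $f_1,\dots,f_N$ over $\RQ k_0(z)$ generates the relation ideal over $\OO{\RQ Q}(z)$ (this is precisely the regularity statement, Lemme~\ref{lem:extreg}, which makes $\AA P\cap\OO{\RQ Q}(z)[X]$ absolutely prime, equivalently generated by its $\RQ k_0(z)$-part). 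Consequently one can choose such a $Q$ with coefficients $c_i\in\RQ k_0[z]$. Specializing at $z=\alpha\in\RQ k_0$ then yields $\lambda_i=c_i(\alpha)\in\RQ k_0$ for all $i$ and $\sum_{i=1}^\ell c_i(\alpha)f_i(\alpha)=-c_0(\alpha)=0$ with not all $c_i(\alpha)$ zero (since the $\lambda_i$ are not all zero), contradicting the $\RQ k_0$-linear independence of $f_1(\alpha),\dots,f_\ell(\alpha)$.

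\textbf{Main obstacle.} The delicate point is the descent to $\RQ k_0$: one must be careful that ``same degree in $X$'' in Corollaire~\ref{cor:indalgi} is genuinely degree $\le 1$ (so $Q$ is affine-linear in $X$, not merely of total degree $1$ in a way that could mix the $X_i$ quadratically — but since $\deg_X P\le 1$ this is automatic), and that the space of affine-linear relations of bounded $z$-degree really is defined over $\RQ k_0$. The latter rests on the absolute primality of $\AA P\cap\OO{\RQ Q}(z)[X]$ furnished by Lemme~\ref{lem:extreg}, together with the elementary fact that an $\OO{\RQ Q}$-subspace of $\OO{\RQ Q}[z]^{N+1}$ cut out by $\RQ k_0$-linear conditions (vanishing of finitely many power-series coefficients, which are $\RQ k_0$-linear in the $c_i$'s coefficients) is $\RQ k_0$-rational. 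Once that is in hand, specialization at $\alpha\in\RQ k_0$ is immediate and the contradiction closes the argument.
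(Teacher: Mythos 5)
Your overall strategy matches the paper's: feed the linear polynomial $P=\sum_{i\le\ell}\lambda_i X_i$ into Corollaire~\ref{cor:indalgi}, obtain $Q\in\OO{\RQ Q}[z,X]$ of degree $\le 1$ in $X$ with $Q(z,\QQ f(z))=0$ and $Q(\alpha,X)=P(X)$, and descend to $\RQ k_0$. But the descent step as you wrote it has a genuine gap. After asserting that the $\OO{\RQ Q}$-space of bounded-degree tuples $(c_0,\dots,c_N)$ with $c_0+\sum c_if_i=0$ is defined over $\RQ k_0$, you say \emph{``one can choose such a $Q$ with coefficients $c_i\in\RQ k_0[z]$. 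Specializing at $z=\alpha$ then yields $\lambda_i=c_i(\alpha)\in\RQ k_0$.''} This cannot be right: once you trade $Q$ for a $\RQ k_0$-rational tuple you lose the equality $Q(\alpha,X)=P(X)$, so there is no reason the new $c_i(\alpha)$ equal the original $\lambda_i$ (if they did, you would have concluded $\lambda_i\in\RQ k_0$, which is precisely what is not given), nor that the new $c_i(\alpha)$ vanish for $i>\ell$ and $i=0$. In short, ``the space is $\RQ k_0$-defined'' does not by itself transport the vanishing/non-vanishing pattern at $\alpha$ to a $\RQ k_0$-rational tuple.

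The paper closes this gap by decomposing the \emph{specific} tuple: let $\xi_1,\dots,\xi_s$ be a $\RQ k_0$-basis of the $\RQ k_0$-span in $\OO{\RQ Q}$ of the coefficients of the $c_i$, and write $c_i(z)=\sum_j c_{i,j}(z)\xi_j$ with $c_{i,j}\in\RQ k_0[z]$. Since the $f_i$ have coefficients in $\RQ k_0$, equating $\xi_j$-components gives $c_{0,j}+\sum_i c_{i,j}f_i=0$ for every $j$. Since $\alpha\in\RQ k_0$ and the $\xi_j$ are $\RQ k_0$-independent, $c_i(\alpha)=0$ is equivalent to $c_{i,j}(\alpha)=0$ for all $j$; hence for each $j$ one has $c_{0,j}(\alpha)=0$ and $c_{i,j}(\alpha)=0$ for $i>\ell$, while $\lambda_i=\sum_j c_{i,j}(\alpha)\xi_j\neq 0$ for some $i\le\ell$ forces $c_{i,j}(\alpha)\neq 0$ for some $i\le\ell$ and some $j$. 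That $j$ yields the nontrivial $\RQ k_0$-relation $\sum_{i\le\ell}c_{i,j}(\alpha)f_i(\alpha)=0$, the desired contradiction. (Alternatively one can phrase this as a nested-subspace argument over $\RQ k_0$, but some such argument is required; the bare replacement of $Q$ is not enough.) Note also that invoking Lemme~\ref{lem:extreg} here is superfluous: the descent uses only that the $f_i$ have coefficients in $\RQ k_0$, not the regularity of the extension $\OO{\RQ Q}(z)\subset\OO{\RQ Q}(z,V)$.
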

\begin{proof}
Raisonnons par l'absurde, soit une relation $\sum_{i=1}^\ell\lambda_i f_i(\alpha)=0$ avec $\lambda_i\in\OO{\RQ Q}$ non tous nuls. D'après le corollaire~\ref{cor:indalgi}, $\alpha$ n'appartenant pas à l'ensemble singulier de~\eqref{eq:systintro}, il existe une relation $c_0(z)+\sum_{i=1}^Nc_{i}(z)f_i(z)=0$ avec $c_{i}(z)\in\OO{\RQ Q}[z]$ satisfaisant $c_{i}(\alpha)=\lambda_i$ pour $i=1,\dots,\ell$ et $c_0(\alpha)=0$, $c_{i}(\alpha)=0$, $i=\ell+1,\dots,N$. Soit $\xi_1,\dots,\xi_s$ une base du \pre{\RQ k_0}espace vectoriel engendré dans $\OO{\RQ Q}$ par les coefficients des polynômes $c_i(z)$. On peut donc écrire $c_{i}(z)=\sum_{j=1}^sc_{i,j}(z)\xi_j$ avec $c_{i,j}(z)\in \RQ k_0[z]$, puis
$$\sum_{j=1}^s\Big(c_{0,j}(z)+\sum_{i=1}^N c_{i,j}(z)f_i(z)\Big)\xi_j=0
\enspace.$$
Et comme les séries $f_i(z)$ sont à coefficients dans $\RQ k_0$, cela entraîne
\begin{equation}\label{eq:dsdemcornondeg}
c_{0,j}(z)+\sum_{i=1}^Nc_{i,j}(z)f_i(z)=0
\enspace.\end{equation}
pour $j=1,\dots,s$, et donc $c_{0,j}(\alpha)+\sum_{i=1}^Nc_{i,j}(\alpha)f_i(\alpha)=0$, $j=1,\dots,s$. Comme $\alpha\in\RQ k_0$, $c_{i,j}(\alpha)\in\RQ k_0$. On observe que $c_{i}(\alpha)=0$ si et seulement si $c_{i,j}(\alpha)=0$ pour tout $j=1,\dots,s$. Mais $\lambda_i=c_{i}(\alpha)=\sum_{j=1}^sc_{i,j}(\alpha)\xi_j$ pour $i=1,\dots,\ell$ ne sont pas tous nuls. Ainsi, pour au moins un $j\in\{1,\dots,s\}$ on obtient une relation non triviale $\sum_{i=1}^\ell c_{i,j}(\alpha)f_i(\alpha)=0$ contredisant l'indépendance linéaire sur $\RQ k_0$ des nombres $f_1(\alpha),\dots,f_\ell(\alpha)$.
\end{proof}

\medskip

Lorsque $\alpha$ est singulier, nous nous ramenons à un système nettoyé de cette singularité. Comme déjà indiqué dans l'introduction B.Adamczewski et C.Faverjon ont obtenu un énoncé jouant le même rôle que le lemme~\ref{lem:desingaut} ci-dessous. Les approches adoptées pour établir ces résultats diffèrent toutefois sensiblement. Adamczewski et Faverjon utilisent une méthode de dédoublement itérée qui augmente la taille du système d'équations fonctionnelles mais a l'avantage de conserver les fonctions originales inaltérées, tandis que nous construisons ici un système de même taille que le système original, par un procédé de repoussage de l'ensemble singulier du système d'équations fonctionnelles qui modifie le vecteur solution considéré. De plus, notre approche ne fonctionne que moyennant une hypothèse d'indépendance linéaire des valeurs des fonctions, alors que celle de Adamczewski et Faverjon évite cet écueil.

\medskip

On suppose dorénavant que la matrice du système~\eqref{eq:systintro} est à coefficients dans $\RQ k_0[z]$ pour un certain corps de nombres $\RQ k_0\subset\RQ C$. De sorte que l'ensemble singulier du système~\eqref{eq:systintro} se réduit aux zéros du déterminant de $A(z)$ et à leurs racines \pre{q^\ell}ième, $\ell\in\RQ N$. De plus, les coefficients de $A(z)$ n'ayant pas de pôle il en est de même des composantes dans $\RQ k_0[[z]]$ du vecteur solution $\QQ f(z)$ de~\eqref{eq:systintro}, dans le disque unité sans bord de $\RQ C$.

\begin{lem}\label{lem:desingaut}
Soit $\RQ k_0\subset\RQ C$ un corps de nombres, $\alpha\in\RQ k_0$, $0<|\alpha|<1$. Supposons que la matrice $A(z)$ du système~\eqref{eq:systintro} soit à coefficients dans $\RQ k_0[z]$ et soit $\QQ f(z)$ un vecteur solution de~\eqref{eq:systintro} à composantes dans $\RQ k_0[[z]]$. On suppose que les nombres $f_1(\alpha),\dots,f_\ell(\alpha)$ sont linéairement indépendants sur $\RQ k_0$ pour un certain $\ell\in\{1,\dots,N\}$. 

Il existe alors une matrice $\tilde A(z)$ à coefficients dans $\RQ k_0[z]$ et un vecteur solution $\tilde{\QQ f}(z)$, à composantes dans $\RQ k_0[[z]]$, du système d'équations fonctionnelles correspondant tels que $\det(\tilde A(\alpha^{q^h}))\not=0$ pour tout $h\in\RQ N$ et pour $i=1,\dots,\ell$:
$$\RQ k_0f_1(\alpha)+\dots+\RQ k_0f_i(\alpha) = \RQ k_0\tilde f_1(\alpha)+\dots+\RQ k_0\tilde f_i(\alpha)\enspace.$$
\end{lem}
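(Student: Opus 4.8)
The idea is to remove the singularities one at a time, starting from the one of smallest absolute value, and to iterate until no $q^h$-power of $\alpha$ is a zero of the determinant of the system matrix. So suppose $\alpha^{q^h}$ is a zero of $\det(A(z))$ for some $h\in\RQ N$, and let $h$ be minimal with this property (if there is no such $h$ there is nothing to do). Write $\beta=\alpha^{q^h}$, so $\det(A(\beta))=0$ but $\det(A(\alpha^{q^j}))\neq 0$ for $0\le j<h$; note that iterating \eqref{eq:systintro} gives $\QQ f(\alpha)=A(\alpha)A(\alpha^q)\cdots A(\alpha^{q^{h-1}})\QQ f(\beta)$ with an invertible product matrix, so the hypothesis that $f_1(\alpha),\dots,f_\ell(\alpha)$ are $\RQ k_0$-linearly independent transports to a corresponding independence statement for suitable $\RQ k_0$-combinations of $f_1(\beta),\dots,f_N(\beta)$; more precisely, after a fixed $\RQ k_0$-change of basis, one may replace the original problem by the analogous one at $\beta$, which is itself of the form covered by the lemma. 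Thus it suffices to treat the case $h=0$, i.e. $\det(A(\alpha))=0$, and produce a new system of the same size, linearly equivalent over $\RQ k_0[z]$ on the relevant flag, whose matrix has nonvanishing determinant at $\alpha$ (and whose full singular set has strictly fewer "levels" in the relevant range).

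For the case $\det(A(\alpha))=0$: the kernel of $A(\alpha)$ is a nonzero $\OO{\RQ Q}$-subspace of $\OO{\RQ Q}^N$, but because $A(z)$ has coefficients in $\RQ k_0[z]$ it is defined over $\RQ k_0$; likewise the image of $A(\alpha)$. The plan is to conjugate the system by a matrix $P(z)\in\RR{Gl}_N(\RQ k_0[z])$ of the form "identity away from the relevant coordinates, multiplication by $(z-\alpha)$ on a complementary piece" — i.e. after choosing an $\RQ k_0$-basis adapted to the image of $A(\alpha)$, one divides the offending components of the solution vector by $(z-\alpha)$ (or by a power of it) and correspondingly modifies $A(z)$, as in the classical Shearing / cyclic-vector-type desingularisation for difference systems. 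Concretely, one sets $\tilde{\QQ f}(z)=P(z)\QQ f(z)$ and $\tilde A(z)=P(z)A(z)\sigma(P)(z)^{-1}$, choosing $P$ so that $\tilde A(z)$ still lies in $\RQ k_0[z]$ (this is where one must control that the $(z-\alpha)$ in the denominator of $P(z)^{-1}$, or rather of $\sigma(P)^{-1}$, cancels against the vanishing of $\det A$ at $\alpha$ — recall $\sigma(P)$ involves $z^q-\alpha$, not $z-\alpha$, so one must be a little careful about \emph{which} power of which linear factor to use, and this is exactly the point the author credits to the reduction lemma of Adamczewski--Faverjon~\cite[lemme~6]{AdFa=15}). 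The multiplicity of $\alpha$ as a zero of $\det(\tilde A)$ is then strictly smaller than for $\det(A)$; after finitely many such steps $\det(\tilde A(\alpha))\neq 0$.

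Two things must be checked along the way. First, that the flag of subspaces is preserved: one needs $\RQ k_0\tilde f_1(\alpha)+\dots+\RQ k_0\tilde f_i(\alpha)=\RQ k_0 f_1(\alpha)+\dots+\RQ k_0 f_i(\alpha)$ for $i=1,\dots,\ell$. This forces the change-of-basis matrix $P(\alpha)$ to be \emph{lower-triangular} (in the appropriate block sense) with entries in $\RQ k_0$ on the first $\ell$ coordinates, and to act by a scalar on each successive quotient — so that at $z=\alpha$ the effect on $(f_1,\dots,f_\ell)$ is an invertible lower-triangular $\RQ k_0$-transformation, which preserves every initial segment of the flag. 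The linear independence hypothesis on $f_1(\alpha),\dots,f_\ell(\alpha)$ guarantees that none of these scalars is forced to be $0$, i.e. that the division by $(z-\alpha)$ never actually hits one of the first $\ell$ components in a way that would make it blow up — this is precisely why the hypothesis is needed, as the author warns in the paragraph preceding the statement. Second, that $\tilde{\QQ f}$ still has components in $\RQ k_0[[z]]$: since the divisions are by $(z-\alpha)$ with $\alpha\neq 0$, and since the components that get divided vanish at $\alpha$ to the required order (that is exactly the content of $\ker A(\alpha)$ being nontrivial, pulled back through the functional equation), the quotients remain power series with coefficients in $\RQ k_0$.

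The main obstacle will be the bookkeeping in the conjugation step: one must choose $P(z)$ so that simultaneously $\tilde A=PA\sigma(P)^{-1}$ stays polynomial over $\RQ k_0$, the multiplicity of $\alpha$ in $\det\tilde A$ genuinely drops, and the triangular structure on the first $\ell$ coordinates is respected. Getting a clean induction — on, say, the pair (number of levels $h$ with $\det A(\alpha^{q^h})=0$, multiplicity at the lowest such level) ordered lexicographically — is the delicate part; everything else is routine linear algebra over $\RQ k_0$ together with the observation that $\sigma$ turns $z-\alpha$ into $z^q-\alpha=\prod_{\zeta^q=\alpha}(z-\zeta)$, of which $z-\alpha^{q^{-1}}$ — not $z-\alpha$ — is a factor, so the cancellations happen one $q$-level up and the induction on levels closes.
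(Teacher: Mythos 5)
Your core idea is the same as the paper's: conjugate by a ``shearing'' matrix $S(z)$ adapted to a relation among the rows of $A(\beta)$, so that the new determinant becomes $\det(A)\cdot\frac{z^q-\beta}{z-\beta}$, and use the linear independence of $f_1(\alpha),\dots,f_\ell(\alpha)$ to guarantee that when $\beta=\alpha$ the minimal $j$ carrying a row relation of $A(\alpha)$ satisfies $j>\ell$, so the shear leaves the first $\ell$ components (hence the flag) alone. These are indeed the two crucial points, and you have them. But as written the plan contains several genuine errors.

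First, the ``transport to $\beta$'' step does not close. If you move the problem to $\beta=\alpha^{q^h}$ and desingularise the levels $\{\beta^{q^j},j\ge 0\}=\{\alpha^{q^{h'}},h'\ge h\}$, each shear at $\beta^{q^i}$, $i\ge 1$, introduces a zero at $\beta^{q^{i-1}}$; the last shears at $\beta$ push zeros to $\beta^{q^{-1}}=\alpha^{q^{h-1}}$, which is ``out of range'' for the problem at $\beta$ but in range for the problem at $\alpha$. So the conclusion of the analogous lemma at $\beta$ is $\det(\tilde A(\alpha^{q^{h'}}))\neq 0$ only for $h'\ge h$, and transporting back to $\alpha$ you find the previously clean levels $0,\dots,h-1$ are now singular. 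The paper never transports: it shears directly at $\beta$ and observes that, because $\beta\neq\alpha$, the matrix $S(\alpha)$ is a well-defined lower-triangular invertible matrix over $\RQ k_0$, so the $\RQ k_0$-flag of $f_1(\alpha),\dots,f_\ell(\alpha)$ is preserved for free.

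Second, your chosen order is inconsistent and your termination argument is wrong. You open with ``smallest absolute value'' (i.e.\ the farthest level $\alpha^{q^m}$, which is the paper's choice) and then pick ``$h$ minimal'' (the closest level to $\alpha$), which is the opposite. The paper works top-down with explicit counters $n_m=\RR{ord}_{\alpha^{q^m}}\det A$, $n_i=n_{i+1}+\RR{ord}_{\alpha^{q^i}}\det A$: shearing $n_m$ times at $\alpha^{q^m}$ pushes all its zeros to level $m-1$, then $n_{m-1}$ shears there push them to $m-2$, and so on, until $n_0$ shears at $\alpha$ push everything to $\alpha^{q^{-1}}\notin\{\alpha^{q^h},h\in\RQ N\}$. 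Your claim that one shear leaves ``strictly fewer levels'' is false: shearing at level $h\ge 1$ decreases the multiplicity there by one but increases the multiplicity at level $h-1$ by one, which may create a new singular level. Any order does terminate (e.g.\ because $\sum_h(h+1)\cdot\RR{ord}_{\alpha^{q^h}}\det A$ drops by one per shear), but the induction needs a correct invariant.

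Third, some linear-algebra slips: the shearing matrix is not in $\RR{Gl}_N(\RQ k_0[z])$ — in the paper $\det S=1/(z-\beta)$, the inverse has determinant $z-\beta$; you cannot have it both ways. And you describe $P$ as ``multiplying by $z-\alpha$'' while also saying $\tilde{\QQ f}=P\QQ f$ ``divides the offending component by $(z-\alpha)$''; these are inverse operations. Taking $P$ to be the multiplication matrix gives $\tilde A=PA\sigma(P)^{-1}$ with $\sigma(P)^{-1}$ carrying $1/(z^q-\beta)$ that has nothing to cancel against, so $\tilde A$ is not polynomial. The correct choice is $P=S$ (with $1/(z-\beta)$ in the $j$-th row); then $SA$ is polynomial because $z-\beta$ divides the combination $\sum_k\lambda_k a_{k,\bullet}(z)$, and $\sigma(S)^{-1}$ is already polynomial (triangular with $z^q-\beta$ on the $j$-th diagonal entry). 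Your worry about ``which power of which factor'' and the reference to Adamczewski--Faverjon is misplaced here: their lemma is a genuinely different reduction (an iterated doubling that enlarges $N$), whereas the paper's construction keeps the size $N$ and only needs the elementary row computation above.
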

\begin{proof}
Soit $\beta\in\RQ k_0$ appartenant à l'ensemble singulier de~\eqref{eq:systintro}, c'est-à-dire tel que $\RR{ord}_\beta\det(A(z))>0$. Nous allons construire un nouveau système dont le déterminant de la matrice sera égal à $\det(A(z))\frac{z^q-\beta}{z-\beta}$. Comme $\det(A(\beta))=0$, il existe au moins une relation non triviale entre les lignes de cette matrice. Soit $j$ le plus petit entier dans $\{1,\dots,N\}$ tel que les $j$ premières lignes de $A(\beta)$ soient liées et notons $\lambda_{1},\dots,\lambda_{j}\in \RQ k_0$, $\lambda_j=1$, les coefficients d'une relation entre ces lignes: $\sum_{k=1}^{j}\lambda_ka_{k,h}(\beta)=0$, pour tout $h=1,\dots,N$. On pose $S(z)$ la matrice dont toutes les lignes coïncident avec la matrice identité $N\times N$, sauf la \pre{j}ième qui s'écrit
$$\Big(\frac{\lambda_{1}}{z-\beta}, \dots, \frac{\lambda_{j}}{z-\beta}, 0, \dots, 0\Big)
\enspace.$$
Cette matrice $S(z)$ est triangulaire inférieure de déterminant $\frac{1}{z-\beta}$. On vérifie que le produit $S(z)A(z)S(z^q)^{-1}$ est à coefficients dans $\RQ k_0[z]$, car $z-\beta$ divise tous les polynômes $\sum_{k=1}^{j}\lambda_ka_{k,h}(z)$, $h=1,\dots,N$. Son déterminant est égal à $\det(A(z))\frac{z^q-\beta}{z-\beta}$. De plus, $S(z)\QQ f(z)$ est solution du système d'équations fonctionnelles associé à $S(z)A(z)S(z^q)^{-1}$ et a les mêmes composantes que $\QQ f(z)$ sauf la \pre{j}ième qui s'écrit $\sum_{k=1}^{j}\lambda_kf_k(z)/(z-\beta)\in\RQ k_0[[z]]$. Si $\beta\not=\alpha$ ou $j>\ell$ le bloc des coefficients d'indices $\le\ell$ de $S(\beta)$ et une matrice $\ell\times\ell$ triangulaire inférieure inversible à coefficients dans $\RQ k_0$, ce qui assure que pour tout $i=1,\dots,\ell$ les \pre{\RQ k_0}espaces vectoriels engendrés par les $i$ premières composantes de $S(\alpha)\QQ f(\alpha)$ d'une part et de $\QQ f(\alpha)$ d'autre part, coïncident. 

Soit maintenant $\alpha\in\RQ k_0$ comme dans l'énoncé et $m\in\RQ N$ le plus grand entier tel que $\alpha^{q^m}$ soit racine de $\det(A(z))$ (si $m$ n'existe pas l'énoncé est clair avec $\tilde A(z)=A(z)$ et $\tilde{\QQ f}(z)=\QQ f(z)$). Définissons une suite d'entiers $n_0,\dots,n_m$ de la façon suivante: $n_m=\RR{ord}_{\alpha^{q^m}}\det(A(z))$, puis $n_i=n_{i+1}+\RR{ord}_{\alpha^{q^i}}\det(A(z))$, pour $i=m-1,\dots,0$. Appliquant la construction précédente $n_m$ fois avec $\beta=\alpha^{q^m}$, puis $n_{m-1}$ fois avec $\beta=\alpha^{q^{n_{m-1}}}$ fois et ainsi de suite jusqu'à $n_0$ fois avec $\beta=\alpha$, on obtient une matrice $\tilde S(z)$ triangulaire inférieure (produit de toutes les matrices introduites successivement au cours de la procédure) telle que la matrice $\tilde A(z)=\tilde S(z)A(z)\tilde S(z)^{-1}$ soit à coefficients dans $\RQ k_0[z]$, de déterminant ne s'annulant pas en $\alpha^{q^h}$, $h\in\RQ N$, et un vecteur solution $\tilde{\QQ f}(z)=\tilde S(z)\QQ f(z)\in\RQ k_0[[z]]$. On observe qu'à chaque pas le nombre de zéros du déterminant (comptés avec multiplicité) dans $\{\alpha^{q^h},h\in\RQ N\}$ est fini et n'augmente pas, tandis que le nombre de zéros dans ce même ensemble, de valeur absolue minimale, diminue strictement. On remarque aussi que lorsqu'on applique la construction expliquée ci-dessus avec $\beta=\alpha$, l'hypothèse d'indépendance linéaire sur $\RQ k_0$ des valeurs en $\alpha$ des $\ell$ première composantes du vecteur solution assure que les $\ell$ premières lignes de la matrice du système sont indépendantes et donc $j>\ell$. Ceci entraîne que, pour $i=1,\dots,\ell$, les \pre{\RQ k_0}espaces vectoriels engendrés par les $i$ premières composantes de $\tilde{\QQ f}(\alpha)=\tilde S(\alpha)\QQ f(\alpha)$ d'une part et de $\QQ f(\alpha)$ d'autre part, coïncident.
\end{proof}

L'énoncé suivant établit le théorème~\ref{thm:indlinautsing} de l'introduction dans toute sa généralité, en combinant la réduction du lemme~\ref{lem:desingaut} et le corollaire~\ref{cor:thmindlinautnondeg}.

\begin{cor}\label{cor:corlinindaut}
Sous les hypothèses du théorème~\ref{thm:indlinautsing} la matrice $A(z)$ est à coefficients dans $\RQ k_0[z]$. Alors, si les nombres $f_1(\alpha),\dots,f_\ell(\alpha)$ sont linéairement indépendants sur $\RQ k_0$, ils sont linéairement indépendants sur $\OO{\RQ Q}$.
\end{cor}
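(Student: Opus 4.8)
The plan is to reduce to corollary~\ref{cor:thmindlinautnondeg} by first desingularising the system at $\alpha$ by means of lemma~\ref{lem:desingaut}. Indeed, corollary~\ref{cor:thmindlinautnondeg} already yields the statement of theorem~\ref{thm:indlinautsing} whenever $\alpha$ avoids the singular set, so the only genuinely new point is to deal with a singular $\alpha$, and the device for that is exactly lemma~\ref{lem:desingaut}.

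First I would apply lemma~\ref{lem:desingaut}: its hypotheses are precisely those of theorem~\ref{thm:indlinautsing} in the polynomial case ($\RQ k_0$ a number field, $A(z)$ with coefficients in $\RQ k_0[z]$, $\QQ f(z)$ a solution with components in $\RQ k_0[[z]]$, and $f_1(\alpha),\dots,f_\ell(\alpha)$ linearly independent over $\RQ k_0$). It produces a matrix $\tilde A(z)$ with coefficients in $\RQ k_0[z]$ and a solution $\tilde{\QQ f}(z)$ of $\tilde{\QQ f}(z)=\tilde A(z)\tilde{\QQ f}(z^q)$ with components $\tilde f_1(z),\dots,\tilde f_N(z)$ in $\RQ k_0[[z]]$, such that $\det(\tilde A(\alpha^{q^h}))\neq 0$ for every $h\in\RQ N$ and $\RQ k_0 f_1(\alpha)+\dots+\RQ k_0 f_\ell(\alpha)=\RQ k_0\tilde f_1(\alpha)+\dots+\RQ k_0\tilde f_\ell(\alpha)$. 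Two remarks then: $(i)$ since $\tilde A(z)$ has polynomial coefficients, the singular set of the new system reduces to the $q^m$-th roots, $m\in\RQ N$, of the zeros of $\det(\tilde A(z))$, so the condition $\det(\tilde A(\alpha^{q^h}))\neq 0$ for all $h\in\RQ N$ means that $\alpha$ is not singular for this new system; $(ii)$ by lemma~\ref{lem:convergence}, the components of $\tilde{\QQ f}(z)$ converge in the open unit disk of $\RQ C$, hence at $\alpha$, so the hypotheses of corollary~\ref{cor:thmindlinautnondeg} (taking $\rho=1$) are satisfied.

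Next I would observe that $\tilde f_1(\alpha),\dots,\tilde f_\ell(\alpha)$ are again linearly independent over $\RQ k_0$: by the displayed equality of vector spaces, these $\ell$ vectors span $\RQ k_0 f_1(\alpha)+\dots+\RQ k_0 f_\ell(\alpha)$, which has dimension $\ell$ because the $f_i(\alpha)$ are $\RQ k_0$-linearly independent; hence they form a basis of it. Applying corollary~\ref{cor:thmindlinautnondeg} to the system with matrix $\tilde A(z)$ and solution $\tilde{\QQ f}(z)$ at the non-singular point $\alpha$ then gives that $\tilde f_1(\alpha),\dots,\tilde f_\ell(\alpha)$ are linearly independent over $\OO{\RQ Q}$. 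Finally, since $(f_1(\alpha),\dots,f_\ell(\alpha))$ and $(\tilde f_1(\alpha),\dots,\tilde f_\ell(\alpha))$ are two bases of one and the same $\RQ k_0$-vector space, the change-of-basis matrix is an invertible $\ell\times\ell$ matrix with entries in $\RQ k_0\subset\OO{\RQ Q}$; so $\OO{\RQ Q}$-linear independence of the second family transfers to the first, which is the desired conclusion.

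As for the main obstacle: essentially all the work is already packed into the two ingredients. Lemma~\ref{lem:desingaut} carries the singularity-pushing construction, and it is precisely there that the hypothesis of linear independence of $f_1(\alpha),\dots,f_\ell(\alpha)$ over $\RQ k_0$ intervenes, to force the pivot-row index $j$ of the step at $\beta=\alpha$ to exceed $\ell$, which is what keeps the flag $\RQ k_0 f_1(\alpha)+\dots+\RQ k_0 f_i(\alpha)$, $i\le\ell$, unchanged; corollary~\ref{cor:thmindlinautnondeg} in turn rests on corollary~\ref{cor:indalgi} and the Galois-theoretic input of section~\ref{sec:Galois}. Once these are in hand, corollary~\ref{cor:corlinindaut} is a short assembly; the only points needing care are the two elementary checks above, namely that the desingularised system is genuinely non-singular at $\alpha$ and that the flag equality is strong enough to move linear independence in both directions across the extension $\RQ k_0\subset\OO{\RQ Q}$.
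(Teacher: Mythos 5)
Your proposal matches the paper's proof essentially step for step: invoke lemma~\ref{lem:desingaut} to obtain the desingularised system with the same $\RQ k_0$-flags at $\alpha$, note that this forces $\tilde f_1(\alpha),\dots,\tilde f_\ell(\alpha)$ to be $\RQ k_0$-linearly independent, apply corollaire~\ref{cor:thmindlinautnondeg} to the new system, and transfer $\OO{\RQ Q}$-independence back via the equality of spans (your explicit change-of-basis matrix is just a restatement of the paper's observation that the two $\OO{\RQ Q}$-spans coincide). Correct, and the same route as the paper.
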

\begin{proof}
Les hypothèses du théorème~\ref{thm:indlinautsing} valident celles des corollaire~\ref{cor:thmindlinautnondeg} et lemme~\ref{lem:desingaut}. Le lemme~\ref{lem:desingaut} fournit un système d'équations fonctionnelles dont $\alpha$ n'est pas singularité et un vecteur solution $\tilde{\QQ f}(z)$ dont les valeurs en $\alpha$ des $\ell$ premières composantes sont linéairement indépendantes sur $\RQ k_0$, car
\begin{equation}\label{eq:dspreuvecorlinindaut}
\RQ k_0\tilde f_1(\alpha)+\dots+\RQ k_0\tilde f_\ell(\alpha) = \RQ k_0f_1(\alpha)+\dots+\RQ k_0f_\ell(\alpha)
\end{equation}
ont même dimension $\ell$ sur $\RQ k_0$. En particulier, le vecteur $\tilde{\QQ f}(\alpha)$ satisfait les hypothèses du corollaire~\ref{cor:thmindlinautnondeg} et on en conclut que $\OO{\RQ Q}\tilde f_1(\alpha)+\dots+\OO{\RQ Q}\tilde f_\ell(\alpha)$ a dimension $\ell$ sur $\OO{\RQ Q}$, d'où le résultat car il suit de~\eqref{eq:dspreuvecorlinindaut}
$$\OO{\RQ Q}\tilde f_1(\alpha)+\dots+\OO{\RQ Q}\tilde f_\ell(\alpha) = \OO{\RQ Q}f_1(\alpha)+\dots+\OO{\RQ Q}f_\ell(\alpha)
\enspace.$$
\end{proof}

\section{Convergence}\label{sec:convergence}
Nous montrons qu'il existe une matrice fondamentale de solutions de~\eqref{eq:systintro} à coefficients séries en $z$ si et seulement si $A(0)=\RR{Id}$. Et que dans ce cas, toute matrice de solutions est méromorphe dans le disque unité et l'une d'entre elles satisfait $U(0)=\RR{Id}$.

Commençons par un lemme sur les solutions de~\eqref{eq:systintro} à composantes séries formelles de Laurent, \emph{i.e.} dans $\RQ C((z))$. Soit $S$ l'ensemble des pôles des coefficients de $A(z)$ distincts de $0$, comptés avec multiplicité, et $m(q-1)$ le plus petit multiple de $q-1$ majorant les multiplicités des pôles des coefficients de $A(z)$ en $0$, on pose $\Delta_A(z)=z^{m(q-1)}\prod_{\xi\in S}(1-\xi^{-1}z)$.

\begin{lem}\label{lem:convergence}
Soit $\QQ f(z)\in\RQ C((z))^N$ satisfaisant~\eqref{eq:systintro}, alors les composantes de $\QQ f(z)$ définissent des fonctions méromorphes dans le disque unité sans bord de $\RQ C$, ayant pour dénominateur commun dans ce disque privé de l'origine la fonction analytique définie par le produit $\prod_{\xi\in S,\ell\in\RQ N}(1-\xi^{-1}z^{q^\ell})$.
\end{lem}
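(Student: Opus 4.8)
\medskip
\noindent\textbf{Esquisse de preuve.}
Le plan est de nettoyer les d\'enominateurs de $A(z)$, d'it\'erer l'\'equation fonctionnelle, puis d'exploiter le fait que $z^{q^\ell}\to0$ lorsque $|z|<1$. Posons $B(z):=\Delta_A(z)A(z)$~: vu la d\'efinition de $\Delta_A(z)=z^{m(q-1)}\prod_{\xi\in S}(1-\xi^{-1}z)$, le facteur $z^{m(q-1)}$ \'elimine le p\^ole \`a l'origine des coefficients de $A(z)$ et $\prod_{\xi\in S}(1-\xi^{-1}z)$ leurs p\^oles distincts de $0$ (compt\'es avec multiplicit\'e), de sorte que $B(z)$ est \`a coefficients polynomiaux. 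L'\'equation~\eqref{eq:systintro} s'\'ecrit alors $\Delta_A(z)\QQ f(z)=B(z)\QQ f(z^q)$ et, en y substituant successivement $z\mapsto z^{q^j}$ pour $j=0,\dots,\ell-1$, on obtient l'identit\'e de s\'eries de Laurent formelles
\begin{equation}\label{eq:dspreuveconvergence}
z^{m(q^\ell-1)}\Big(\prod_{j=0}^{\ell-1}\prod_{\xi\in S}(1-\xi^{-1}z^{q^j})\Big)\QQ f(z)=\Big(\prod_{j=0}^{\ell-1}B(z^{q^j})\Big)\QQ f(z^{q^\ell})\enspace,
\end{equation}
o\`u l'on a utilis\'e $\prod_{j=0}^{\ell-1}z^{q^jm(q-1)}=z^{m(q-1)(q^\ell-1)/(q-1)}=z^{m(q^\ell-1)}$, ce pour quoi il importe que $m(q-1)$ soit multiple de $q-1$.

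Je commencerais par \'etablir la convergence de $\QQ f(z)$ au voisinage de l'origine. \'Ecrivant $\QQ f(z)=\sum_{n\ge n_0}\QQ c_nz^n$ avec $\QQ c_n\in\RQ C^N$ et comparant les coefficients de $z^{n+m(q-1)}$ dans $\Delta_A(z)\QQ f(z)=B(z)\QQ f(z^q)$, le mon\^ome de plus bas degr\'e $z^{m(q-1)}$ de $\Delta_A(z)$ ayant pour coefficient $1$, on voit que d\`es que $n$ est assez grand le vecteur $\QQ c_n$ s'exprime comme une combinaison $\RQ C$-lin\'eaire fixe, \`a coefficients tir\'es de $\Delta_A$ et de $B$, des $\QQ c_j$ pour $j<n$~: y interviennent les indices $n-1,\dots,n-|S|$ issus de $\Delta_A(z)$ et des indices de l'ordre de $n/q$ issus de $B(z)\QQ f(z^q)$, tous strictement inf\'erieurs \`a $n$ d\`es que $(n+m(q-1))/q<n$, c'est-\`a-dire $n>m$. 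On en d\'eduit aussit\^ot que $\max_{j\le n}\|\QQ c_j\|$ cro\^it au plus g\'eom\'etriquement en $n$, et donc que $\QQ f(z)$ d\'efinit une fonction analytique dans un disque point\'e $0<|z|<\varepsilon_0$ pour un certain $\varepsilon_0>0$.

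Je propagerais ensuite la m\'eromorphie \`a tout le disque unit\'e gr\^ace \`a~\eqref{eq:dspreuveconvergence}. Soit $0<r<1$ et $\ell$ assez grand pour que $r^{q^\ell}<\varepsilon_0$~: pour $|z|<r$ on a $|z^{q^\ell}|<\varepsilon_0$, donc $\QQ f(z^{q^\ell})$, et partant le membre de droite de~\eqref{eq:dspreuveconvergence}, d\'efinit une fonction vectorielle m\'eromorphe dans le disque $|z|<r$, analytique dans $0<|z|<r$. L'identit\'e~\eqref{eq:dspreuveconvergence} pr\'esente alors $\QQ f(z)$ comme le quotient de cette fonction par le polyn\^ome $z^{m(q^\ell-1)}\prod_{j=0}^{\ell-1}\prod_{\xi\in S}(1-\xi^{-1}z^{q^j})$, d'o\`u la m\'eromorphie de $\QQ f(z)$ dans $|z|<r$~; hors de l'origine $z^{m(q^\ell-1)}$ est inversible, si bien que $\big(\prod_{j=0}^{\ell-1}\prod_{\xi\in S}(1-\xi^{-1}z^{q^j})\big)\QQ f(z)$ y est analytique. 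Comme $|z|^{q^j}\le|z|^{2^j}$ pour $|z|<1$ et $\sum_j|z|^{2^j}<\infty$, le produit infini $\prod_{\xi\in S,\,j\in\RQ N}(1-\xi^{-1}z^{q^j})$ converge localement uniform\'ement dans $|z|<1$ et y d\'efinit une fonction analytique dont $\prod_{j=0}^{\ell-1}\prod_{\xi\in S}(1-\xi^{-1}z^{q^j})$ est un facteur, les facteurs d'indice $j\ge\ell$ restant analytiques. Il suit que $\big(\prod_{\xi\in S,\,j\in\RQ N}(1-\xi^{-1}z^{q^j})\big)\QQ f(z)$ est analytique dans $0<|z|<r$, puis dans $0<|z|<1$ en faisant tendre $r$ vers $1$, ce qui donne \`a la fois la m\'eromorphie annonc\'ee dans le disque unit\'e et le d\'enominateur commun voulu dans ce disque priv\'e de l'origine.

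Le point d\'elicat me para\^it \^etre l'estimation locale du deuxi\`eme paragraphe~: il faut v\'erifier avec soin le suivi des indices dans la r\'ecurrence sur les $\QQ c_n$, de fa\c{c}on que pour $n$ grand $\QQ c_n$ ne d\'epende que de coefficients d'indice strictement inf\'erieur --- ce qui repose sur la position du mon\^ome dominant $z^{m(q-1)}$ de $\Delta_A$ et sur l'in\'egalit\'e $(n+m(q-1))/q<n$ ---, afin d'en d\'eduire la croissance g\'eom\'etrique des $\QQ c_n$. Tout le reste n'est que manipulation formelle et analyse complexe \'el\'ementaire du produit infini.
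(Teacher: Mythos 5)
Your proof is correct, but it follows a genuinely different route from the paper's. For convergence near the origin you clear denominators to obtain $\Delta_A(z)\QQ f(z)=B(z)\QQ f(z^q)$ with $B$ polynomial, and you only extract a crude geometric bound on $\|\QQ c_n\|$, yielding analyticity in some small punctured disk $0<|z|<\varepsilon_0$; you then bootstrap to the whole unit disk via the iterated identity, using that $z^{q^\ell}$ eventually falls into that small disk. The paper instead proves, in the polynomial-coefficient case, the finer recursive estimate $M_h\le\Vert A\Vert M_{[h/q]}$, hence $M_h\le h^{\log\Vert A\Vert/\log q}M_{q-1}$ (polynomial growth), which gives radius of convergence $\ge1$ in one stroke with no bootstrap at all; the general rational case is then reduced to two auxiliary polynomial systems, with matrices $\Delta_A(z)$ and $\Delta_A(z)A(z)$, whose explicit Laurent solutions $\delta(z)=z^{-m}\prod_{\xi\in S,\ell\in\RQ N}(1-\xi^{-1}z^{q^\ell})$ and $\delta(z)\QQ f(z)$ are divided to recover $\QQ f(z)$ and its common denominator. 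Your version trades the sharper coefficient estimate for an analytic-continuation argument using~\eqref{eq:systintro} iterated, and your extraction of the common denominator directly from the iterated identity is, if anything, a bit more direct than the paper's $\QQ f(z)\prod_{\xi\in S,\ell}(1-\xi^{-1}z^{q^\ell})=z^m\delta(z)\QQ f(z)$; the paper's approach is more modular and dispenses with the bootstrap. Both are sound.
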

\begin{proof}
Montrons d'abord le résultat lorsque $A(z)$ est à coefficients dans $\RQ C[z]$. \'Ecrivons $\QQ f(z) = \sum_{h\ge-n}\QQ c_{h}z^h$ avec $\QQ c_h\in\RQ C^N$, $n\in\RQ N$, et $A(z)=\sum_{j=0}^dA_jz^j$ avec $A_j$ matrice $N\times N$ à coefficients dans $\RQ C$. Notons $M_h$ le maximum des normes $L^1$ des vecteurs $\QQ c_{-n},\dots,\QQ c_h$. On déduit du système~\eqref{eq:systintro} les équations
$$\QQ c_h = \sum_{\stackrel{0\le j\le \min(h+n,d)}{\scriptscriptstyle j\equiv h(q)}}A_{j}\QQ c_{(h-j)/q}
\enspace.$$
De sorte que $M_h\le \Vert A\Vert M_{[h/q]}$  où $\Vert A\Vert$ désigne la somme des normes $L^1$ des lignes des matrices $A_0,\dots,A_d$. Ainsi a-t-on
$$M_h \le \Vert A\Vert^{[\log(h)/\log(q)]}M_{q-1} \le h^{\log\Vert A\Vert/\log(q)}M_{q-1}$$
pour tout $h\ge q$, ce qui entraîne que les séries composantes de $\QQ f(z)$ con\-ver\-gent dans le disque unité sans bord de $\RQ C$ (éventuellement privé de l'origine si $n>0$).

Dans le cas général, on applique ce qui précède aux deux systèmes d'équations fonctionnelles associés aux matrices $\Delta_A(z)$ et $\Delta_A(z)A(z)$ à coefficients dans $\RQ C[z]$. Une solution du premier système est donnée par le produit $\delta(z)=z^{-m}\prod_{\xi\in S,\ell\in\RQ N}(1-\xi^{-1}z^{q^\ell})$ qui se développe en un élément de $\RQ C((z))$. La seconde équation a pour solution $\delta(z)\QQ f(z)$ qui est également à coefficients dans $\RQ C((z))$. Par la première partie de la preuve, toutes ces séries convergent dans le disque unité sans bord de $\RQ C$ privé de l'origine. En l'origine elles définissent des fonctions méromorphes et on conclut donc que le quotient $\QQ f(z)=\delta(z)\QQ f(z)/\delta(z)$ est un vecteur à composantes méromorphes dans le disque unité sans bord de $\RQ C$. De plus, $\QQ f(z)\prod_{\xi\in S,\ell\in\RQ N}(1-\xi^{-1}z^{q^\ell}) = z^m\delta(z)\QQ f(z)$ est analytique dans le disque unité sans bord privé de l'origine.
\end{proof}

\begin{prop}\label{prop:enonceconvergence}
Soit $\SS O$ un sous-anneau de $\RQ C$ et $A(z)$ une matrice à coefficients dans $\RQ C(z)$ satisfaisant $A(0)=\RR{Id}$, telle que $\Delta_A(z)\in\SS O[z]$ et $\Delta_A(z)A(z)$ soit à coefficients dans $\SS O[z]$. Alors le produit infini
$$A(z)A(z^q)\dots A(z^{q^\ell})\dots$$
converge \pre{z}adiquement vers une matrice $U(z)$ à coefficients dans $\SS O[[z]]$, méromorphes dans le disque unité sans bord de $\RQ C$. De plus, $U(z)$ satisfait $U(0)=\RR{Id}$ et le système d'équations fonctionnelles~\eqref{eq:systintro}.
 
Un dénominateur commun des coefficients de $U(z)$ est la fonction, analytique dans le disque unité sans bord de $\RQ C$, $\prod_{\xi\in S,\ell\in\RQ N}(1-\xi^{-1}z^{q^\ell})$.
\end{prop}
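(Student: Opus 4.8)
Le plan est de montrer que le produit infini converge $z$-adiquement, d'identifier sa limite comme une matrice fondamentale de solutions \`a coefficients s\'eries formelles, puis d'invoquer le lemme~\ref{lem:convergence} pour le comportement analytique.

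D'abord, j'observerais que l'hypoth\`ese $A(0)=\RR{Id}$ impose que $A(z)$ n'a pas de p\^ole en l'origine~; en particulier l'entier $m$ intervenant dans la d\'efinition de $\Delta_A$ est nul, de sorte que $\Delta_A(0)=1$. Comme $\Delta_A(z)\in\SS O[z]$ a pour terme constant $1$, son inverse formel $1/\Delta_A(z)$ est \`a coefficients dans $\SS O$ (r\'ecurrence \'evidente sur le degr\'e), donc appartient \`a $\SS O[[z]]$. En le multipliant par $\Delta_A(z)A(z)\in\RR{Mat}_N(\SS O[z])$, j'en d\'eduirais $A(z)\in\RR{Mat}_N(\SS O[[z]])$, la relation $A(0)=\RR{Id}$ s'\'ecrivant par ailleurs $A(z)\equiv\RR{Id}\pmod z$ dans cet anneau de s\'eries.

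Ensuite, la substitution $z\mapsto z^{q^\ell}$ donne $A(z^{q^\ell})\equiv\RR{Id}\pmod{z^{q^\ell}}$, de sorte que les produits partiels $U_L(z):=A(z)A(z^q)\cdots A(z^{q^L})$ v\'erifient $U_{L+1}-U_L=U_L\bigl(A(z^{q^{L+1}})-\RR{Id}\bigr)\equiv 0\pmod{z^{q^{L+1}}}$~; la suite $(U_L)_L$ est donc de Cauchy pour la topologie $z$-adique et, l'anneau $\SS O[[z]]$ \'etant complet, elle converge vers une matrice $U(z)\in\RR{Mat}_N(\SS O[[z]])$. L'\'evaluation en $z=0$ donne $U(0)=\RR{Id}$ puisque $U_L(0)=\RR{Id}$ pour tout $L$. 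En isolant le premier facteur du produit et en utilisant la continuit\'e de la multiplication \`a gauche par $A(z)$ pour la topologie $z$-adique, j'obtiendrais $U(z)=A(z)U(z^q)$, c'est-\`a-dire que $U$ satisfait~\eqref{eq:systintro}~; en particulier $\det U(0)=1$, donc $U$ est inversible sur $\SS O[[z]]$ et constitue une matrice fondamentale de solutions au sens de la d\'efinition~\ref{def:matricefondam}.

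Enfin, pour l'\'enonc\'e analytique, j'appliquerais le lemme~\ref{lem:convergence} \`a chaque colonne $\QQ u$ de $U$~: c'est un \'el\'ement de $\RQ C((z))^N$ (en fait de $\RQ C[[z]]^N$) solution de~\eqref{eq:systintro}, donc ses composantes d\'efinissent des fonctions m\'eromorphes dans le disque unit\'e sans bord de $\RQ C$, admettant $\prod_{\xi\in S,\ell\in\RQ N}(1-\xi^{-1}z^{q^\ell})$ pour d\'enominateur commun en dehors de l'origine~; comme $\QQ u$ n'a elle-m\^eme pas de p\^ole en l'origine, ce produit est en r\'ealit\'e un d\'enominateur commun sur tout le disque unit\'e, o\`u il est analytique comme rappel\'e dans le lemme~\ref{lem:convergence}. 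Je ne vois pas ici d'obstacle s\'erieux~: les seuls points demandant un peu d'attention sont l'int\'egralit\'e sur $\SS O$ de $1/\Delta_A$ et le fait que l'hypoth\`ese $A(0)=\RR{Id}$ l\'egitime le d\'eveloppement en s\'erie de $A$ utilis\'e tout au long.
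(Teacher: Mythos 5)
Votre d\'emonstration est correcte et suit essentiellement la m\^eme voie que celle du texte~: convergence $z$-adique des produits partiels (via $A(z)\equiv\RR{Id}\pmod z$, donc $A(z^{q^\ell})\equiv\RR{Id}\pmod{z^{q^\ell}}$), identification de la limite comme matrice fondamentale $U(z)=A(z)U(z^q)$ avec $U(0)=\RR{Id}$, puis appel au lemme~\ref{lem:convergence} pour la m\'eromorphie et le d\'enominateur commun. Vous explicitez au passage, un peu plus que le texte, l'int\'egralit\'e de $1/\Delta_A$ sur $\SS O$ justifiant le d\'eveloppement de $A$ dans $\RR{Mat}_N(\SS O[[z]])$~; c'est bien l'argument sous-entendu par le \emph{Nota Bene} de la proposition.
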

\begin{undef}{Nota Bene}
La convergence \pre{z}adique du produit infini est à comprendre après développement des coefficients de la matrice $A(z)$ en séries de $\SS O[[z]]$, lorsque ceux-ci ne sont pas des polynômes.
\end{undef}

\begin{proof}
Appelons $A^{(\ell)}(z)$ la matrice obtenue en tronquant le produit infini à l'exposant $q^{\ell-1}$. Comme $A(0)=\RR{Id}$, on vérifie que les coefficients dans $\SS O[[z]]$ des matrices $A^{(\ell')}(z)$ pour $\ell'\geq\ell$ coïncident modulo $z^{q^\ell}$, ce qui montre que le produit converge bien \pre{z}adiquement vers une matrice à coefficients dans $\SS O[[z]]$. 

La méromorphie des coefficients de $U(z)$ dans le disque unité sans bord de $\RQ C$ suit alors du lemme~\ref{lem:convergence}. Et, de par sa définition, il est clair que la matrice $U(z)$ satisfait $U(0)=\RR{Id}$ et le système d'équations fonctionnelles $U(z)=A(z)U(z^q)$.
\end{proof}

\begin{cor}\label{cor:enonceconvergence}
Supposons que $0$ n'est pas pôle du déterminant de $A(z)$ et soit $\SS O$ le sous-anneau de $\RQ C$ engendré par les coefficients des coefficients de $\Delta_A(z)$ et $\Delta_A(z)A(z)$.

Il existe une matrice fondamentale de solutions $U(z)$ de~\eqref{eq:systintro} dont les coefficients sont analytiques dans un voisinage de l'origine de $\RQ C$ si et seulement si $A(0)=\RR{Id}$.

De plus, dans ce cas les coefficients de $U(z)$ sont méromorphes dans le disque unité sans bord. La matrice $U(z)$ est unique à multiplication à droite par une matrice à coefficients dans $\RQ C$ près, qui peut être choisie de sorte que $U(0)=\RR{Id}$ et $U(z)$ soit à coefficients dans $\SS O[[z]]$. Un dénominateur commun dans $\SS O[[z]]$ des coefficients de $U(z)$ est $\prod_{\xi\in S,\ell\in\RQ N}(1-\xi^{-1}z^{q^\ell})$.
\end{cor}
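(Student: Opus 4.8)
The plan is to derive this corollary from Proposition~\ref{prop:enonceconvergence} and Lemma~\ref{lem:convergence}, together with the elementary fact that a function meromorphic at the origin of $\RQ C$ which is a $\sigma$-constant is necessarily constant: if $\sigma(f)=f$ then $f(z)=f(z^{q^\ell})\to f(0)$ as $\ell\to\infty$ for $z$ small.

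First I would settle the ``only if'' implication. Suppose $U(z)$ is a fundamental matrix of solutions of~\eqref{eq:systintro} whose coefficients are analytic in a neighbourhood of $0$. Taking determinants in $U(z)=A(z)U(z^q)$ gives $\det U(z)=\det A(z)\det U(z^q)$, hence, comparing orders at the origin, $(1-q)\RR{ord}_0\det U=\RR{ord}_0\det A$. Since $0$ is not a pole of $\det A$ the right-hand side is $\ge0$, while $\RR{ord}_0\det U\ge0$ by analyticity; as $1-q<0$ both orders vanish, in particular $\det U(0)\not=0$. Therefore $A(z)=U(z)U(z^q)^{-1}$ is analytic at $0$, and evaluating there gives $A(0)=U(0)U(0)^{-1}=\RR{Id}$.

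For the converse and the quantitative assertions, I would observe that $A(0)=\RR{Id}$ forces the integer $m$ in the definition of $\Delta_A$ to be $0$, so $\Delta_A(z)=\prod_{\xi\in S}(1-\xi^{-1}z)$, and, by the very choice of $\SS O$, both $\Delta_A(z)$ and $\Delta_A(z)A(z)$ have coefficients in $\SS O[z]$. Proposition~\ref{prop:enonceconvergence} then applies and furnishes the matrix $U(z)=\prod_{\ell\ge0}A(z^{q^\ell})$, with coefficients in $\SS O[[z]]$, meromorphic in the open unit disk of $\RQ C$, satisfying $U(0)=\RR{Id}$ and $U=A\sigma(U)$; since $U(0)=\RR{Id}$ these coefficients are analytic at $0$ and $\det U(z)=\prod_{\ell\ge0}\det A(z^{q^\ell})$ is nonzero near $0$, so $U$ is a genuine fundamental matrix. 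The common denominator statement follows at once because $\prod_{\xi\in S,\ell\in\RQ N}(1-\xi^{-1}z^{q^\ell})=\prod_{\ell\ge0}\Delta_A(z^{q^\ell})$, a $z$-adically convergent product of polynomials of $\SS O[z]$, hence an element of $\SS O[[z]]$.

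It remains to prove uniqueness up to a right constant factor. Given two fundamental matrices $U_1,U_2$ of~\eqref{eq:systintro} with coefficients meromorphic near $0$, I set $C=U_1^{-1}U_2$ and check, using $\sigma(U_i)=A^{-1}U_i$, that $\sigma(C)=(A^{-1}U_1)^{-1}(A^{-1}U_2)=U_1^{-1}U_2=C$; by the remark above each entry of $C$ is constant, so $C\in\RR{Gl}(\RQ C^N)$ and $U_2=U_1C$. Since the order computation of the first step shows $\det U_1(0)\not=0$, the choice $C=U_1(0)^{-1}$ is admissible and produces the normalisation $U(0)=\RR{Id}$; this normalised matrix coincides with the one delivered by Proposition~\ref{prop:enonceconvergence}, whence its coefficients lie in $\SS O[[z]]$. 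I expect no real difficulty; the only point requiring care is the order computation at $0$ in the ``only if'' direction, which is exactly what secures $\det U(0)\not=0$ and therefore the regularity of $A$ at $0$, everything else being a direct appeal to the results already obtained.
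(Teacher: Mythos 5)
Your proof is correct and follows essentially the same route as the paper: same determinant-order computation at the origin for the ``only if'' direction, same appeal to Proposition~\ref{prop:enonceconvergence} for existence and the denominator, and the same right-multiplication uniqueness. The only cosmetic difference is that you derive uniqueness directly from the elementary fact that a function meromorphic at $0$ with $\sigma(f)=f$ is constant, whereas the paper cites Proposition~\ref{wronskien}; both amount to the same observation.
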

\begin{proof}
Dans une direction, si $A(0)=\RR{Id}$ l'énoncé suit de la proposition~\ref{prop:enonceconvergence}, car la proposition~\ref{wronskien} montre que les matrices fondamentales de solutions se déduisent l'une de l'autre par multiplication à droite par une matrice inversible à coefficients dans $\RQ C$.

Dans l'autre direction, l'équation~\eqref{eq:systintro} entraîne
\begin{equation}\label{eq:det}
{\det(U(z))} = \det(A(z))\cdot{\det(U(z^q))}
\end{equation}
qui montre
\begin{equation}\label{eq:orddet}
0=\RR{ord}_0(\det(A(z)))+(q-1).\RR{ord}_0(\det(U(z)))
\enspace,\end{equation}
car $\RR{ord}_0(\det(U(z^q)))=q.\RR{ord}_0(\det(U(z)))$. Supposons que~\eqref{eq:systintro} a une matrice fondamentale de solutions à coefficients dans $\RQ C[[z]]$, analytique dans un voisinage de l'origine. Alors, comme $q\ge 2$, $\det(U(z))\in\RQ C[[z]]$ et $0$ n'est pas pôle de $\det(A(z))$, \eqref{eq:orddet} implique
$$\RR{ord}_0(\det(A(z)))=\RR{ord}_0(\det(U(z)))=0
\enspace.$$
Ainsi la matrice $U(0)$ est inversible et il en est de même de $U(z)$ pour tout $z$ suffisamment proche de $0$. L'identité $A(z)=U(z)U(z^q)^{-1}$ entraîne $A(0)=U(0)U(0)^{-1}=\RR{Id}$. Le reste de l'énoncé découle à nouveau de la proposition~\ref{prop:enonceconvergence}.
\end{proof}

\medskip

%\begin{acknowledgements}\label{ackref}
Je remercie Daniel Bertrand pour son aide \emph{via} de nombreuses discussions sur la théorie de Galois des équations fonctionnelles. Je remercie également Boris Adamczewski pour m'avoir fait part de son travail~\cite{AdFa=15} avec Colin Faverjon.
%\end{acknowledgements}


\begin{thebibliography}{CHS=08}

\bibitem{AdBu=07}{B.Adamczewski \& Y.Bugeaud -- On the complexity of algebraic numbers I. Expansion in integer bases, {\it Ann. of Math.} 165 (2007), 547-565.}
\filbreak
\bibitem{AdFa=15}{B.Adamczewski \& C.Faverjon -- {\it Communication personnelle} (2015).}
\filbreak
\bibitem{AS=03}{J-P.Allouche \& J.Shallit -- {\it Automatic sequences. Theory, applications, generalizations}, Cambridge Univ. Press (2003).}
\filbreak
\bibitem{An=00}{Y.André -- Séries Gevrey de type arithmétique I \& II, {\it Ann. of Math.} 151 (2000), 705-740 \& 741-756.}
\filbreak
\bibitem{An=12}{Y.André -- Solution algebras of differential equations and quasi-homogeneous varieties: a new differential Galois correspondence, arXiv:1107.1179v2 (2012).}
\filbreak
\bibitem{Bk=94}{P-G.Becker -- \pre{k}regular power series and Mahler type functional equations, {\it J. Number Theory} 49 (1994), 269-286.}
\filbreak
\bibitem{Be=06}{F.Beukers -- A refined version of the Siegel-Shidlovskii theorem, {\it Ann. of Math.} 163 (2006), 369-379.}
\filbreak
\bibitem{Ei=95}{D.Eisenbud -- {\it Commutative algebra, with 	a view towards algebraic geometry}, Graduate. Text Math. 150, Springer Verlag (1995).}
\filbreak
\bibitem{Kr=48}{K.Krull -- Parameterspezialisierung in Polynomringen II. Das Grundpolynom, {\it Arch. Math.} 1 (1948), 129-137.}
\filbreak
\filbreak
\bibitem{NS=96}{Y.V.Nesterenko \& A.B.Shidlovskii -- Linear independence of values of \pre{E}func\-tions, {\it Sb. Math.} 187 (1996), 1197-1211; translated from {\it Mat. Sb.} 187 (1996), 93-108.}
\filbreak
\bibitem{NPQ=12}{P.Nguyen Phu Qui -- \'Equations de Mahler et hypertranscendance, \emph{Thèse} Univ. P. et M. Curie, Paris (Juillet 2012).}
\filbreak
\bibitem{Ni=96}{K.Nishioka -- {\it Mahler functions and transcendence}, Lecture Notes in Math. 1631, Springer Verlag (1996).}
\filbreak
\bibitem{PPh=11}{P.Philippon -- {\it Some aspects of Mahler's method}, Cours à IMSc, Madras (Février 2011) et à TIFR, Bombay (Novembre 2014).}
\filbreak
\bibitem{vdPS=97}{M.van der Put \& M.F.Singer -- {\it Galois theory of difference equations}, Lecture Notes in Math. 1666, Springer Verlag (1997).}
\filbreak
\bibitem{ZS=60}{O.Zariski \& P.Samuel -- {\it Commutative algebra II}, Graduate Texts in Math. 29, Springer Verlag (1960).}
\filbreak
\bibitem{Zo=10}{E.Zorin -- Lemmes de z\'eros et relations fonctionnelles, {\it Th\`ese}, Univ. Pierre et Marie Curie (2010). Téléchargeable à http://tel.archives-ouvertes.fr/tel-00558073/fr/}


\end{thebibliography}
\end{document}